\newtheorem{theorem}{Theorem}[section]
\newtheorem{lemma}{Lemma}[section]
\newtheorem{assumption}[theorem]{Assumption}
\newtheorem{remark}{Remark}[section]
\newtheorem{corollary}{Corollary}[section]
\numberwithin{equation}{section}
\newcommand{\ga}{\gamma}
\newcommand{\Ga}{\Gamma}
\newcommand{\fy}{\varphi}
\newcommand{\ep}{\epsilon}
\def\al{{\alpha}}
\def\RR{{\mathbb R}}
\def\II{{(D)}}
\def\ur{u^r}
\def\tf{\widetilde f}
\def\DD#1#2{\icount=#1
  \ifnum\icount<1
  \,_{ 0}\kern -.1em D^{#2}_{\kern -.1em x}
  \else
  \,_{x}\kern -.2em D^{#2}_1
  \fi
}
\def\DDRI#1#2{\icount=#1
  \ifnum\icount<1
  \,_{-\infty}^{\kern 1em R}\kern -.2em D^{#2}_{\kern -.1em x}
  \else
  \,_{x}^R \kern -.2em D^{#2}_\infty
  \fi
}
\def\DDR#1#2{\icount=#1
  \ifnum\icount<1
 _{0}^{ \kern -.1em R} \kern -.2em D^{#2}_{\kern -.1em x}
  \else
 _{x}^{ \kern -.1em R} \kern -.2em D^{#2}_{\kern -.1em 1}
  \fi
}
\def\DDCI#1#2{\icount=#1
  \ifnum\icount<1
  \,_{-\infty}^{\kern 1em C}  \kern -.2em D^{#2}_{\kern -.1em x}
  \else
  \,_{x}^C \kern -.2em  D^{#2}_\infty
  \fi
}
\def\DDC#1#2{\icount=#1
  \ifnum\icount<1
  \,_{0}^C \kern -.2em  D^{#2}_{\kern -.1em x}
  \else
  \,_{x}^C \kern -.2em D^{#2}_1
  \fi
}
\def\Hd#1{\widetilde H^{#1}(D)}
\def\Hdi#1#2{\icount=#1
  \ifnum\icount<1
  \widetilde H_{L}^{#2}\II
  \else
  \widetilde H_{R}^{#2}\II
  \fi
}
\begin{document}
\title[FEM with reconstruction for fractional BVP]
{A Finite Element Method with Singularity Reconstruction for
Fractional Boundary Value Problems}
\author {Bangti Jin \and Zhi Zhou}
\address {Department of Computer Science, University College London, Gower Street, London WC1E 6BT, UK ({bangti.jin@gmail.com})}
\address {Department of Mathematics, Texas A\&M University, College Station, TX 77843-3368
({{zzhou@math.tamu.edu}})}

\date{started Sep 21, 2013; today is \today}

\begin{abstract}
We consider a two-point boundary value problem involving a Riemann-Liouville fractional derivative of order $\al\in (1,2)$ in the
leading term on the unit interval $(0,1)$. Generally the standard Galerkin finite element method can only give a low-order convergence
even if the source term is very smooth due to the presence of the singularity term $x^{\al-1}$ in the solution representation.
In order to enhance the convergence, we develop a simple singularity reconstruction strategy by splitting the solution into
a singular part and a regular part, where the former captures explicitly the singularity. We derive a new variational formulation
for the regular part, and establish that the Galerkin approximation of the regular part can achieve a better convergence order
in the $L^2(0,1)$, $H^{\al/2}(0,1)$ and $L^\infty(0,1)$-norms than the standard Galerkin approach, with a convergence rate for the recovered
singularity strength identical with the $L^2(0,1)$ error estimate. The reconstruction approach is very flexible in handling explicit
singularity, and it is further extended to the case of a Neumann type boundary condition on the left end point, which
involves a strong singularity $x^{\al-2}$. Extensive numerical results confirm the theoretical study and efficiency of the
proposed approach.\\
\textbf{Keywords}: finite element method; Riemann-Liouville derivative; fractional boundary
value problem; error estimate; singularity reconstruction.
\end{abstract}

\maketitle

\section{Introduction}\label{sec:intro}
We consider the following fractional-order boundary value problem:
find $u$ such that
\begin{equation}\label{strongp}
  \begin{aligned}
   -\DDR 0 \alpha u + qu = f, & \quad \mbox{ in } D\equiv(0,1),\\
   u(0)=u(1)=0, &
   \end{aligned}
\end{equation}
where $\alpha\in(1,2)$ is the order of the derivative, and $\DDR 0\al$ refers to
the Riemann-Liouville fractional derivative of order $\alpha$ defined in \eqref{Riemann}
below. Here $f$ is a function in $L^2\II$ or other suitable Sobolev space. The
potential coefficient $q \in L^\infty(D)$ is a bounded measurable function.
If the fractional order $\alpha$ equals two, the fractional derivative
$\DDR0\alpha u$ coincides with the usual second-order derivative $u''$ \cite{KilbasSrivastavaTrujillo:2006},
and thus problem \eqref{strongp} generalizes the classical two-point boundary value problem.

The interest in the model \eqref{strongp} is largely motivated by anomalous diffusion processes, in which the
mean square variance grows faster (superdiffusion) or slower (subdiffusion) than that in a
Gaussian process. In recent years, a lot of works \cite{BensonWheatcraftMeerschaert:2000,
MetzlerKlafter:2000} have shown that anomalous diffusion offers a superior
fit to experimental data observed in transport phenomena in some practical applications,
e.g., viscoelastic materials, soil contamination, and underground water flow. The
model \eqref{strongp} represents the steady state of one-dimensional asymmetric superdiffusion
process, which is typically observed in magnetized plasma and geophysical flow
\cite{delCastillo:2003,delCastillo:2005}. It can be viewed as the macroscopic
counterpart of L\'{e}vy flights, like Brownian motion for the classical diffusion equation;
see \cite{BensonWheatcraftMeerschaert:2000} for a detailed derivation from the microscopic model and
relevant physical explanations in the context of underground flow. Numerous experimental
studies have demonstrated that it can capture accurately the distinct features of superdiffusion processes.

The excellent modeling capabilities of the model \eqref{strongp} have generated considerable interest in deriving,
analyzing and testing numerical methods for solving related initial boundary
value problems. The finite difference scheme is predominant in existing studies, and it is usually based on a shifted Gr\"{u}nwald-Letnikov formula, which
is first order accurate, for the Riemann-Liouville fractional derivative in space (see e.g. \cite{TadjeranMeerschaert:2007,
Sousa:2009}). In \cite{DengHesthaven:2013}, a local discontinuous Galerkin method was developed. The shifted
Gr\"{u}nwald-Letnikov formulas can be combined with suitable weights to achieve a second-order accuracy, under the assumption
that the solution is sufficiently smooth \cite{TianZhouDeng:2014}. However, the precise condition under which the solution
is indeed smooth remains unknown. Very recently, we \cite{JinLazarovPasciakZhou:2014siam}
developed a finite element method in space/backward Euler the (or Crank-Nicolson method) in time, based on the variational
formulation developed in \cite{JinLazarovPasciak:2013a}, and provided $L^2\II$ error estimates for the approximation.

The mathematical study on the model \eqref{strongp} has just started to attract attention. First we
note that the Riemann-Liouville fractional derivative operator is not the fractional power of the
Laplace operator, and thus the well developed analytic theory on the fractional Laplacian does not
apply to the model \eqref{strongp}. Ervin and Roop \cite{ErvinRoop:2006} presented a first rigorous
analysis of the stability of the weak formulation of problem \eqref{strongp}, and an optimal
$\Hd{\alpha/2}$-error estimate for the Galerkin finite element method (FEM) was also developed under
the assumption that the solution has full regularity, i.e., $\|u\|_{H^\al\II} \le C \|f \|_{L^2\II}$,
which however is generally not true \cite{JinLazarovPasciak:2013a} (see Section \ref{ssec:fraccal} below for
the definition of the space $\Hd s$). Recently, Wang and Yang \cite{WangYang:2013} developed
a Petrov-Galerkin formulation of the model \eqref{strongp} with a variable coefficient, and analyzed
its variational stability, but the discrete counterpart was not discussed.
The model \eqref{strongp} was very recently revisited in \cite{JinLazarovPasciak:2013a}, and the case of a
Caputo fractional derivative
was also analyzed. Further, the proper variational formulations for both fractional derivatives
were derived, and convergence rates of the Galerkin FEM approximations were established.

In \cite{JinLazarovPasciak:2013a}, it was shown that the solution $u$ to \eqref{strongp} is generally only
in the space $H^{\al-1+\beta}\II$ with $\beta\in [1-\alpha/2,1/2)$ even if the source term $f$ is much smoother
than $L^2\II$, due to the presence of the singular term $x^{\al-1}$ in the solution representation; see
\eqref{eqn:sols} below  for details. This leads to a low-order convergence rate for the standard Galerkin
FEM approximation based on the continuous piecewise linear finite element space (on a quasi-uniform partition
of the domain $D$). There are several possible strategies for improving the convergence, e.g., adaptive
refinement, solution space enrichment and singularity reconstruction. In this paper we opt for a simple
strategy, inspired by the interesting work \cite{CaiKim:2001}, that allows us to overcome this difficulty.
Specifically, we split the solution $u$ into
\begin{equation*}
  u=\ur + \mu u^s,
\end{equation*}
where $\ur$ denotes the regular part of the solution $u$ that has a better Sobolev regularity
than $H^{\al-1+\beta}\II$ with $\beta\in[1-\alpha/2,1/2)$, and $u^s$ captures the leading singularity
$x^{\al-1}$, with the scalar $\mu$ being the singularity strength. We derive a new
variational formulation for the regular part $\ur$ and a reconstruction formula for the scalar
$\mu$, and further, develop a stable finite element scheme for approximating the regular part $\ur$
and then reconstructing the singularity strength $\mu$. Theoretically, in Theorems \ref{lem:femerror}
and \ref{thm:femerrorinf}, for the Galerkin approximation $u_h$, we derive
error estimates in the $\Hd {\al/2}$, $L^2\II$ and $L^\infty\II$-norm, and the recovered singularity
strength $\mu_h$ exhibits a convergence order identical with the $L^2\II$ error estimate. Namely, for $f\in \Hd
\gamma$, $q\in \Hd\gamma\cap L^\infty\II$, with $\beta\in[1-\alpha/2,1/2)$ and
$\ell(\beta,\gamma)=:\min(\al-1+\beta,\ga)$, there holds
\begin{equation*}
   \|u-u_h\|_{L^2\II} + h^{\al/2-1+\beta}\|u-u_h\|_{\Hd{\alpha/2}}
   \le C h^{\min(2,\al+\ell(\beta,\gamma))-1+\beta} \|f\|_{\Hd\ga},
\end{equation*}
which is higher than that for the classical approach, cf. Theorem \ref{thm:femrl} below.
It is worth noting that all error estimates are expressed in terms of the right-hand side $f$ only.
Numerically, the computational effort of the new approach is nearly identical with the classical one.
Further, the singularity reconstruction approach is capable of resolving very strong singularity, which
is highly nontrivial for other approaches, if not impossible at all. A strong solution singularity arises, for
example, in the case of a Neumann boundary condition at the left end point, on which we shall further illustrate the
flexibility of the proposed reconstruction approach.

The rest of the paper is organized as follows. In Section \ref{sec:prelim} we recall some important properties of
fractional derivatives and integrals, and describe the variational formulation for the source problem \eqref{strongp}
and the standard Galerkin FEM. We provide also a new $L^\infty\II$-error estimate, besides the known $\Hd {\alpha/2}$
and $L^2\II$ error estimates. In Section \ref{sec:formulation}, a novel variational formulation for the regular part
$u^r$ of the solution and the reconstruction formula of the singularity strength $\mu$ are developed, and the stability
of the formulation is established. The stability of the discrete variational formulation based on a Galerkin procedure is shown in
Section \ref{sec:newFEM}, and the $\Hd {\alpha/2}$, $L^2\II$ and $L^\infty\II$ error estimates  are also provided. Further,
in Section \ref{sec:neumann}, we extend the approach to the case of a Neumann boundary condition at the left end point,
which involves a strong solution singularity $x^{\al-2}$, to illustrate its flexibility. Finally, numerical
results for are presented in Section \ref{sec:numeric} to confirm the convergence theory. Throughout, we use the notation $C$, with
or without a subscript, to denote a generic constant, which can take different values at different occurrences, but it
is always independent of the solution $u$ and the mesh size $h$.

\section{Preliminaries}\label{sec:prelim}
In this part, we describe fundamentals of fractional calculus, the variational
formulation for the source problem \eqref{strongp} and the Galerkin approximation.
\subsection{Fractional calculus}\label{ssec:fraccal}
We first briefly recall the Riemann-Liouville fractional derivative. For any $\beta>0$
with $n-1 < \beta < n$, $n\in \mathbb{N}$, the left-sided Riemann-Liouville fractional
derivative $\DDR0\beta u$ of order $\beta$ of a function $u\in H^n(D)$ is defined by
\cite[pp. 70]{KilbasSrivastavaTrujillo:2006}:
\begin{equation}\label{Riemann}
  \DDR0\beta u =\frac {d^n} {d x^n} \bigg({_0\hspace{-0.3mm}I^{n-\beta}_x} u\bigg) .
\end{equation}
Here $_0\hspace{-0.3mm}I^{\gamma}_x$ for $\gamma>0$
is the left-sided Riemann-Liouville fractional integral operator of order $\gamma$ defined by
\begin{equation*}
 ({\,_0\hspace{-0.3mm}I^\gamma_x} f) (x)= \frac 1{\Gamma(\gamma)} \int_0^x (x-t)^{\gamma-1} f(t)dt,
\end{equation*}
where $\Gamma(\cdot)$ is Euler's Gamma function defined by $\Gamma(x)=\int_0^\infty t^{x-1}e^{-t}dt$.
The right-sided versions of fractional-order integral $_xI_1^\gamma$ and derivative $\DDR 1 \beta$ are defined analogously by
\begin{equation*}
  ({_x\hspace{-0.3mm}I^\gamma_1} f) (x)= \frac 1{\Gamma(\gamma)}\int_x^1 (t-x)^{\gamma-1}f(t)\,dt\quad\mbox{and}\quad
  \DDR1\beta u =(-1)^n\frac {d^n} {d x^n} \bigg({_x\hspace{-0.3mm}I^{n-\beta}_1} u\bigg) .
\end{equation*}

Now we introduce some function spaces. For any $\beta\ge 0$, we denote $H^\beta\II$ to be
the Sobolev space of order $\beta$ on the unit interval $D$, and $\Hd \beta $ to be the set of
functions in $H^\beta\II$ whose extension by zero to the real line $\mathbb{R}$ are in $H^\beta(\RR)$. Analogously, we define
$\Hdi 0 \beta$ (respectively, $\Hdi 1 \beta$) to be the set of functions $u$ whose extension by zero, denoted by
$\tilde{u}$, is in $H^\beta(-\infty,1)$ (respectively, $H^\beta(0,\infty)$). For $u\in \Hdi 0
\beta$, we set $\|u\|_{\Hdi 0\beta}:=\|\tilde{u}\|_{H^\beta(-\infty,1)}$, and analogously the norm
in $\Hdi 1 \beta$.

The following theorem collects their important properties. The proof of part (a) can be found
in \cite[pp. 46, Theorem 2.4]{SamkoKilbasMarichev:1993} or \cite[pp. 73, Lemma 2.3]{KilbasSrivastavaTrujillo:2006};
and parts (b) and (c) can be found in \cite[Theorems 2.1 and 3.1]{JinLazarovPasciak:2013a}.
\begin{theorem}\label{thm:fracop}
The following statements hold.
\begin{itemize}
  \item[$\mathrm{(a)}$] The integral operators $_0I_x^\beta$ and $_xI_1^\beta$ satisfy the semigroup property, i.e., for any $\beta,\gamma>0$
   \begin{equation*}
     _0I_x^\beta ({_0I_x^\gamma} f) = {_0I_x^{\beta+\gamma}}f\quad \mbox{and}\quad
     _xI_1^\beta ({_xI_1^\gamma} f) = {_xI_1^{\beta+\gamma}}f\quad \forall f\in L^2\II.
   \end{equation*}
  \item[$\mathrm{(b)}$] The operators $\DDR0\beta: \Hdi 0 \beta \to L^2\II$ and $\DDR1\beta: \Hdi 1\beta \to L^2\II $ are continuous.
  \item[$\mathrm{(c)}$] For any $s,\beta\geq 0$, the operator
$_0I_x^\beta$ is bounded from $\Hdi0 s$ to $\Hdi0{\beta+s}$,
and $_xI_1^\beta$ is bounded from $\Hdi1 s$ to $\Hdi1{\beta+s}$.
\end{itemize}
\end{theorem}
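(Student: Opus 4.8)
The three parts are naturally proved in the order (a), (b), (c); throughout, the right-sided statements follow from the left-sided ones via the reflection $x\mapsto 1-x$, which maps $\Hdi0 s$ isometrically onto $\Hdi1 s$ and intertwines ${}_0I_x^{\gamma}$ with ${}_xI_1^{\gamma}$ and $\DDR0\beta$ with $\DDR1\beta$, so I only discuss the left-sided operators. For part (a) the plan is a direct computation: write ${}_0I_x^{\beta}({}_0I_x^{\gamma}f)(x)$ as an iterated integral and apply Fubini's theorem — legitimate because for $f\in L^2\II$ Young's convolution inequality gives ${}_0I_x^{\gamma}|f|\in L^2\II\subset L^1\II$, so the iterated integrals are finite a.e. — which reduces the claim to evaluating the inner integral $\int_s^x(x-t)^{\beta-1}(t-s)^{\gamma-1}\,dt$. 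The substitution $t=s+(x-s)\tau$ turns this into $(x-s)^{\beta+\gamma-1}B(\gamma,\beta)=(x-s)^{\beta+\gamma-1}\Gamma(\beta)\Gamma(\gamma)/\Gamma(\beta+\gamma)$, and reinserting it reproduces exactly ${}_0I_x^{\beta+\gamma}f(x)$. There is no obstacle here.

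For part (b) the idea is to transfer the operator to the whole line, where the Fourier transform diagonalises it. Given $u\in\Hdi0{\beta}$ (with $n-1<\beta<n$), extend its zero-extension $\widetilde u\in H^{\beta}(-\infty,1)$ to some $v\in H^{\beta}(\RR)$ with $\|v\|_{H^{\beta}(\RR)}\le C\|u\|_{\Hdi0{\beta}}$; since $v$ agrees with $\widetilde u$ on $(-\infty,1)$ it vanishes on $(-\infty,0)$ and coincides with $u$ on $(0,1)$. By causality of the convolution kernel, ${}_0I_x^{n-\beta}u={}_{-\infty}I_x^{n-\beta}v$ on $(0,1)$, hence $\DDR0\beta u={}_{-\infty}D_x^{\beta}v$ there. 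On $\RR$ the operator ${}_{-\infty}D_x^{\beta}$ acts as the Fourier multiplier $(i\xi)^{\beta}$ — first checked on Schwartz functions from $(i\xi)^{n}\cdot(i\xi)^{-(n-\beta)}=(i\xi)^{\beta}$ and then extended to $H^{\beta}(\RR)$ by density — and since $|(i\xi)^{\beta}|=|\xi|^{\beta}\le(1+\xi^{2})^{\beta/2}$, one obtains $\|{}_{-\infty}D_x^{\beta}v\|_{L^2(\RR)}\le\|v\|_{H^{\beta}(\RR)}$; restricting to $(0,1)$ gives $\|\DDR0\beta u\|_{L^2\II}\le C\|u\|_{\Hdi0{\beta}}$.

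Part (c) is where I expect the real work. First I would reduce the general-$s$ statement to the case $s=0$: part (b) gives that ${}_0D_x^{\gamma}\colon\Hdi0{\gamma}\to L^2\II$ is bounded, and granting the $s=0$ case — that ${}_0I_x^{\gamma}\colon L^2\II\to\Hdi0{\gamma}$ is bounded — the identities ${}_0D_x^{\gamma}\,{}_0I_x^{\gamma}=\mathrm{id}$ on $L^2\II$ and ${}_0I_x^{\gamma}\,{}_0D_x^{\gamma}=\mathrm{id}$ on $\Hdi0{\gamma}$ (the boundary terms vanishing precisely because of the zero-extension condition, using part (a)) show these two operators are mutually inverse isomorphisms; hence any $u\in\Hdi0 s$ is $u={}_0I_x^{s}g$ with $g={}_0D_x^{s}u\in L^2\II$ of comparable norm, and ${}_0I_x^{\beta}u={}_0I_x^{\beta}{}_0I_x^{s}g={}_0I_x^{\beta+s}g$ by part (a), which lies in $\Hdi0{\beta+s}$ with controlled norm by the $s=0$ case applied with exponent $\beta+s$. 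So everything reduces to showing ${}_0I_x^{\beta}\colon L^2\II\to\Hdi0{\beta}$ is bounded for every $\beta>0$. For this I would again pass to the line, extending $f\in L^2\II$ by zero to $\bar f\in L^2(\RR)$ supported in $[0,1]$, so that the zero-extension of ${}_0I_x^{\beta}f$ coincides on $(-\infty,1)$ with ${}_{-\infty}I_x^{\beta}\bar f$, a function supported in $[0,\infty)$ and smooth on $(1,\infty)$. The Fourier symbol $(i\xi)^{-\beta}$ of ${}_{-\infty}I_x^{\beta}$ has a harmless high-frequency part — it converts the $L^2$-weight exactly into the $H^{\beta}$-weight — but an unbounded low-frequency part, so ${}_{-\infty}I_x^{\beta}\bar f$ need not lie in $H^{\beta}(\RR)$; this low-frequency obstruction is the genuine difficulty of the whole theorem. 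To get around it I would multiply by a smooth cutoff equal to $1$ on $[-1,1]$ and supported in $[-2,2]$, which leaves the function unchanged on $[0,1]$, and prove the resulting compactly supported function lies in $H^{\beta}(\RR)$; since ${}_{-\infty}I_x^{\beta}\bar f$ is smooth away from $[0,1]$, this comes down to the local mapping property of the Riemann--Liouville integral ${}_0I_x^{\beta}\colon L^2\to H^{\beta}$ on a bounded interval (a standard fact, provable by a direct Gagliardo seminorm estimate, or by interpolating between the elementary cases $\beta\in\mathbb N$ and $\beta\in(0,1/2)$ using part (a)) together with the elementary observation that ${}_0I_x^{\beta}f$ vanishes like $x^{\beta}$ at the origin, so its zero-extension is no rougher than $H^{\beta}$ there. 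Apart from this step, the proof is routine bookkeeping.
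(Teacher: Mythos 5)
The paper does not actually prove Theorem~\ref{thm:fracop}: part (a) is imported from Samko--Kilbas--Marichev and parts (b)--(c) from Theorems 2.1 and 3.1 of Jin--Lazarov--Pasciak, so there is no in-paper argument to compare against; your reconstruction follows the same standard route as those references. Parts (a) and (b) of your proposal are correct as written: the Fubini/Beta-function computation is exactly the classical proof of the semigroup property, and the extension--causality--Fourier-multiplier argument for (b) is sound (the only unstated ingredients are a bounded extension operator from $H^\beta(-\infty,1)$ to $H^\beta(\RR)$ and the tempered-distribution identification of the symbol $(i\xi)^\beta$, both standard). The reduction of (c) to the case $s=0$ via the mutually inverse pair ${}_0I_x^{s}$ and $\DDR0{s}$, together with part (a), is also legitimate once the $s=0$ case is in hand, provided you note that the kernel elements $x^{s-1},x^{s-2},\dots$ of $\DDR0{s}$ are excluded from $\Hdi0{s}$ by the zero-extension condition.

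The one genuine gap is the last step of (c): the claim that ${}_0I_x^{\beta}f$ ``vanishes like $x^{\beta}$ at the origin, so its zero-extension is no rougher than $H^{\beta}$ there.'' First, the rate is wrong: Cauchy--Schwarz gives only $|{}_0I_x^{\beta}f(x)|\le Cx^{\beta-1/2}\|f\|_{L^2\II}$, and for $\beta\le 1/2$ no pointwise rate holds at all. Second, and more seriously, pointwise vanishing of the function and its derivatives at $x=0$ is \emph{not} sufficient to conclude that the zero extension lies in $H^{\beta}$ when $\beta-1/2$ is a nonnegative integer: in that critical case $\Hdi0{\beta}$ is strictly smaller than the subspace of $H^{\beta}\II$ with vanishing traces at the origin (the $H^{1/2}_{00}$ phenomenon), so the statement your step gestures at is false, and the theorem as stated includes these values of $\beta$. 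What is actually needed, writing $\beta=n+\sigma$ with $n=\lfloor\beta\rfloor$ and $\sigma\in(0,1)$ and noting that the extra contribution to the Gagliardo seminorm of the zero extension of $\frac{d^n}{dx^n}\,{}_0I_x^{\beta}f={}_0I_x^{\sigma}f$ is a constant multiple of a weighted integral near $x=0$, is the Hardy-type bound
\begin{equation*}
\int_0^1 x^{-2\sigma}\bigl|({}_0I_x^{\sigma}f)(x)\bigr|^2\,dx\le C\|f\|_{L^2\II}^2 ,
\end{equation*}
which is true for every $\sigma\in(0,1)$ and provable by inserting a power weight $t^{a}$ with $a\in(-1,0)$ into the Cauchy--Schwarz estimate of the kernel. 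With this inequality in place your cutoff argument closes for all $\beta\ge0$ uniformly, with no case distinction on half-integers; without it, the proof of (c) is incomplete precisely at the point that makes the mapping property nontrivial.
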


We shall also need an ``algebraic'' property of the space $\Hd s$, $0<s<1$
\cite{JinLazarovPasciak:2013a}.
\begin{lemma}\label{lem:Hmulti}
Let $0<s\leq1$, $s\neq 1/2$. Then for any $u\in H^ s\II \cap L^\infty\II$
and $v\in \Hd s\cap L^\infty\II$, $uv\in \Hd s$.
\end{lemma}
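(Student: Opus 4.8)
The plan is to reduce the claim to the standard fact that $H^s(\mathbb R)\cap L^\infty(\mathbb R)$ is closed under pointwise multiplication, and to handle the endpoint $s=1$ by the Leibniz rule. Throughout I write $\tilde w$ for the extension by zero of a function $w$ on $D$ to $\mathbb R$; recall that, by definition, $v\in\widetilde H^s(D)$ means precisely $\tilde v\in H^s(\mathbb R)$, and that $\tilde v\in L^\infty(\mathbb R)$ whenever $v\in L^\infty(D)$.

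First I would construct a bounded $H^s$-extension of $u$ to the line. Let $E:H^s(D)\to H^s(\mathbb R)$ be a bounded extension operator and set $\bar u:=T_M(Eu)$, where $M:=\|u\|_{L^\infty(D)}$ and $T_M(t):=\max(-M,\min(M,t))$ is truncation at level $M$. Since $T_M$ is $1$-Lipschitz with $T_M(0)=0$, one has $|T_M w|\le|w|$ and $|T_M w(x)-T_M w(y)|\le|w(x)-w(y)|$ pointwise, so $T_M$ maps $H^s(\mathbb R)$ into itself boundedly for every $s\in(0,1]$ (via the Gagliardo seminorm when $s<1$, via the chain rule when $s=1$). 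Hence $\bar u\in H^s(\mathbb R)\cap L^\infty(\mathbb R)$ and $\bar u|_D=u$ because $|u|\le M$ on $D$. The crucial observation is then the pointwise identity $\widetilde{uv}=\bar u\,\tilde v$ on $\mathbb R$: both sides equal $uv$ on $D$ and both vanish off $D$ because $\tilde v$ does. Thus it suffices to show $\bar u\,\tilde v\in H^s(\mathbb R)$.

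For $0<s<1$ I would obtain this from the multiplier estimate coming from inserting
\begin{equation*}
 |f(x)g(x)-f(y)g(y)|\le\|f\|_{L^\infty}\,|g(x)-g(y)|+\|g\|_{L^\infty}\,|f(x)-f(y)|
\end{equation*}
into the Gagliardo seminorm $[\cdot]_{H^s(\mathbb R)}$: squaring, using $(a+b)^2\le2a^2+2b^2$, and integrating against $|x-y|^{-1-2s}\,dx\,dy$ gives $[fg]_{H^s(\mathbb R)}\le C\big(\|f\|_{L^\infty}[g]_{H^s(\mathbb R)}+\|g\|_{L^\infty}[f]_{H^s(\mathbb R)}\big)$, while $\|fg\|_{L^2(\mathbb R)}\le\|f\|_{L^\infty}\|g\|_{L^2(\mathbb R)}$; taking $f=\bar u$ and $g=\tilde v$ finishes this range. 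For $s=1$ I would instead note $(\bar u\,\tilde v)'=\bar u'\,\tilde v+\bar u\,\tilde v'\in L^2(\mathbb R)$, which is immediate since $\bar u,\tilde v\in L^\infty(\mathbb R)$ and $\bar u',\tilde v'\in L^2(\mathbb R)$ (with $\tilde v'=\widetilde{v'}$, using the vanishing boundary values of $v$); together with $\bar u\,\tilde v\in L^2(\mathbb R)$ this gives $\bar u\,\tilde v\in H^1(\mathbb R)$, i.e. $uv\in\widetilde H^1(D)=H^1_0(D)$.

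I expect the only point needing a little care to be the bounded-extension step—producing an $H^s(\mathbb R)$-extension of $u$ that stays in $L^\infty(\mathbb R)$—which is precisely why the Lipschitz truncation $T_M$ is inserted; the multiplier inequality itself is elementary. The restriction $s\ne1/2$ plays essentially no role in this argument and is inherited from the convention of \cite{JinLazarovPasciak:2013a}; one could alternatively avoid extensions entirely, using $\widetilde H^s(D)=H^s(D)$ for $0<s<1/2$ (so the Gagliardo estimate on $D\times D$ applies directly) and $\widetilde H^s(D)=\{w\in H^s(D):w|_{\partial D}=0\}$ for $1/2<s\le1$, combined with the embedding $H^s(D)\hookrightarrow C(\overline{D})$ in the latter range to see that $uv$ inherits the zero boundary values of $v$. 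In this second route $s=1/2$ is genuinely the borderline, since there $\widetilde H^{1/2}(D)$ is a proper subspace of $H^{1/2}(D)$, and it is therefore set aside.
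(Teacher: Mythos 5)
The paper itself does not prove this lemma; it is quoted verbatim from \cite{JinLazarovPasciak:2013a}, so there is no in-paper argument to compare against line by line. Your proof is correct and self-contained, and it is worth recording how it differs from the route taken in the cited reference. There, the identification of $\Hd{s}$ with a concrete subspace of $H^s\II$ is used directly on the interval: for $0<s<1/2$ one has $\Hd s=H^s\II$ and the Gagliardo product estimate on $D\times D$ suffices, while for $1/2<s\le 1$ one uses $\Hd s=\{w\in H^s\II: w(0)=w(1)=0\}$ together with the continuity of $u$ and the vanishing of $v$ at the endpoints --- exactly the ``second route'' you sketch at the end, and the reason $s=1/2$ is excluded from the statement, since $\Hd{1/2}$ is a strict subspace of $H^{1/2}\II$ with a strictly stronger norm. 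Your main argument (extend $u$ by a bounded extension operator, restore the $L^\infty$ bound by the Lipschitz truncation $T_M$, observe $\widetilde{uv}=\bar u\,\tilde v$, and apply the Gagliardo product estimate on $\mathbb{R}$, with the Leibniz rule at $s=1$) works uniformly in $s\in(0,1]$ and correctly exposes the exclusion of $s=1/2$ as an artifact of the subspace identification rather than of the multiplication property itself. The truncation step is the one genuinely nonroutine ingredient and you justify it adequately (monotonicity of the Gagliardo seminorm under $1$-Lipschitz post-composition for $s<1$; Stampacchia's chain rule for $s=1$). Two small points to make explicit if this were written out in full: the equivalence of the Slobodeckij seminorm with the $H^s(\mathbb{R})$ norm used to define $\Hd s$, and, in the $s=1$ case, the validity of the weak Leibniz rule for products of $H^1\cap L^\infty$ functions; both are standard and neither affects the correctness of the argument.
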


\subsection{Variational formulation}
Now we describe the variational formulation of problem \eqref{strongp}, developed in \cite{JinLazarovPasciak:2013a}.
We first consider the simplest source problem with a vanishing potential, i.e., $q \equiv 0$:
\begin{equation}\label{eqn:simp}
\begin{split}
    -{\DDR0 \al} u=f &\quad\mbox { in }D,\\
     u(0)=u(1)=0&,
\end{split}
\end{equation}
with $f\in L^2\II$. Note that ${_0I_x^\al}f \in \Hdi0 \al$ and thus by Theorem
\ref{thm:fracop}(b), $\DDR0 \al \left({_0I_x^\al}f \right) \in L^2\II$ is well-defined.
By Theorem \ref{thm:fracop}(a) we have for all $f\in C_0^{\infty}\II$
\begin{equation*}
 \DDR0 \al \left( {_0I_x^\al}f\right)= \left( {_0I_x^{2-\al}}\left( {_0I_x^\al}f\right)\right)''
   =\left( I^2f\right)''=f,
\end{equation*}
and it is true for $f\in L^2\II$ by a density argument; see also \cite[pp. 44, Theorem 2.4]{SamkoKilbasMarichev:1993}
discussions in the general case. Thus the solution $u$ of problem \eqref{eqn:simp} has the explicit form:
\begin{equation}\label{eqn:sols}
u(x)=-({_0I_x^\al}f)(x)+({_0I_x^\al}f)(1)x^{\al-1},
\end{equation}
by noting the relation $\DDR 0 \alpha x^{\alpha-1}=0$.
The solution $u$ is only in $\Hdi0 {\al-1+\beta}$ with $\beta \in [1-\alpha/2,1/2)$
in general, even for a smooth source term $f$, due to the presence of the singular term $x^{\al-1}$.

The variational formulation of problem \eqref{eqn:simp} is given by \cite{ErvinRoop:2006,JinLazarovPasciak:2013a}:
find $u\in V\equiv \Hd {\al/2}$ such that
\begin{equation}\label{eqn:var1}
    A(u,v)=(f,v) \quad \forall v\in V,
\end{equation}
with the bilinear form $A(\cdot,\cdot)$ defined on $V \times V$ by
\begin{equation}\label{eqn:Abil}
    A(u,v)=-(\DDR0{{\alpha/2}}u,\,\DDR1{{\alpha/2}}v) \quad \forall u,v \in V.
\end{equation}
It is known that the bilinear form $A(\cdot,\cdot)$ is coercive on the space $V$
(see \cite[Lemma 3.1]{ErvinRoop:2006}, \cite[Lemma 4.2]{JinLazarovPasciak:2013a}),
i.e., there is a constant $C_0$ such that for all $u\in V$
\begin{equation}\label{A-coercive}
  A(u,u) \ge C_0 \| u \|^2_{V},
\end{equation}
and it is continuous on $V$, i.e., for all $u,v\in V$
\begin{equation}\label{continuous}
    |A(u,v)| \le C_1 \| u \|_V\| v \|_V.
\end{equation}

We now turn to the general case of $q\neq 0$ and define
\begin{equation*}
a(u,v) = A(u,v) +(qu,v).
\end{equation*}
Then the variational formulation for problem \eqref{strongp} is given
by: to find $u\in V$ such that
\begin{equation}\label{varrl}
    a(u,v)=(f,v)\quad \forall v\in V.
\end{equation}
To study the bilinear form $a(\cdot,\cdot)$, we make the following assumption. The assumption holds
automatically for the case $q\geq 0$, in view of the coercivity of the bilinear form $A(\cdot,\cdot)$
on the space $\Hd {\alpha/2} $ \cite[Lemma 4.2]{JinLazarovPasciak:2013a}.
\begin{assumption} \label{ass:riem}
Let the bilinear form $a(u,v)$ with $u,v\in V$ satisfy
\begin{itemize}
 \item[{$\mathrm{(a)}$}]  The problem of finding $u \in V$ such that $a(u,v)=0$ for all $v \in V$
           has only the trivial solution $u\equiv 0$.
 \item[{$(\mathrm{a}^\ast)$}] The problem of finding $v \in V$ such that $a(u,v)=0$ for all $u \in V$
    has only the trivial solution $v\equiv 0$.
\end{itemize}
\end{assumption}

Under Assumption \ref{ass:riem}, there exists a unique solution $u\in V$
to \eqref{varrl} \cite[Theorem 4.3]{JinLazarovPasciak:2013a}. In fact the variational
solution is a strong solution. To see this, we consider the problem
\begin{equation}\label{rl-qneq0}
-{\DDR0{\alpha}} u = f-qu.
\end{equation}
A strong solution is given by \eqref{eqn:sols} with a right hand side $\widetilde{f}=f-qu$.
It satisfies the variational problem \eqref{varrl} and hence coincides with the unique variational
solution. Further, the solution $u$ satisfies the following regularity
\begin{equation*}
  \|u \|_{\Hdi 0 {\al -1 +\beta}} \le C \|f\|_{L^2\II},
\end{equation*}
for any $\beta\in[1-\alpha/2,1/2)$.
Thus the global regularity of the solution $u$ does not improve with the
regularity of the source term $f$, due to the inherent presence of the term $x^{\alpha-1}$
in the solution representation \eqref{eqn:sols}.

\subsection{Standard Galerkin FEM}\label{sec:FEM}
Now we describe the standard finite element approximation based on a uniform partition of the interval
$D$. Let $h=1/m $ be the mesh size with $m>1$ being a positive integer, and the nodes $x_j=jh$, $j=0,
\ldots,m$. We then define $V_h$ to be the continuous piecewise linear finite element space, i.e.,
\begin{equation*}
   V_h = \left\{\chi\in C_0(\overline{D}): \chi \mbox{ is linear over }  [x_j,x_{j+1}], \, j=0,\ldots,m-1 \right\}.
\end{equation*}
It is well known that the space $V_h$ has the following approximation properties. 
\begin{lemma}\label{fem-interp-U}
If $u \in H^\gamma\II \cap \Hd {\al/2}$ with $ \alpha/2 \le \gamma \le 2$, then
\begin{equation}\label{approx-Uh}
\inf_{v \in V_h} \| u -v \|_{H^{\al/2}\II} \le ch^{\gamma -\alpha/2} \|u\|_{H^\gamma\II}.
\end{equation}
\end{lemma}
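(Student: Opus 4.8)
The plan is to derive the estimate from the standard piecewise‑polynomial approximation bounds together with real interpolation of the Sobolev scale. First I would introduce the nodal (Lagrange) interpolation operator $I_h\colon C(\overline D)\to V_h$ associated with the nodes $x_j=jh$ and check that $I_hu$ is legitimate: since $\gamma\ge\al/2>1/2$, the embedding $H^\gamma\II\hookrightarrow C(\overline D)$ makes the nodal values $u(x_j)$ well defined, and since $u\in\Hd{\al/2}$ with $\al/2>1/2$ the zero extension of $u$ is continuous on $\RR$, so $u(0)=u(1)=0$ and hence the continuous piecewise linear interpolant lies in $V_h$. Because $\inf_{v\in V_h}\|u-v\|_{H^{\al/2}\II}\le\|u-I_hu\|_{H^{\al/2}\II}$, it suffices to bound the latter.

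Next I would record the classical interpolation error estimates on the uniform mesh, obtained by the Bramble--Hilbert lemma plus a scaling argument. For $1\le\gamma\le2$ one has
\[
 \|u-I_hu\|_{L^2\II}\le c\,h^{\gamma}\|u\|_{H^\gamma\II},\qquad
 \|u-I_hu\|_{H^1\II}\le c\,h^{\gamma-1}\|u\|_{H^\gamma\II},
\]
whereas for $\al/2\le\gamma\le1$ one uses the fractional analogue together with the local stability of nodal interpolation in $H^\gamma$ for $\gamma>1/2$,
\[
 \|u-I_hu\|_{L^2\II}\le c\,h^{\gamma}\|u\|_{H^\gamma\II},\qquad
 \|u-I_hu\|_{H^\gamma\II}\le c\,\|u\|_{H^\gamma\II}.
\]

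Finally I would interpolate. Since $\al>1$ we have $\al/2\in(1/2,1)$, so $[L^2\II,H^1\II]_{\al/2}=H^{\al/2}\II$ with equivalent norms (the endpoint value $1/2$ is never hit). In the range $1\le\gamma\le2$ this gives
\[
 \|u-I_hu\|_{H^{\al/2}\II}\le c\,\|u-I_hu\|_{L^2\II}^{1-\al/2}\,\|u-I_hu\|_{H^1\II}^{\al/2}
 \le c\,h^{\gamma(1-\al/2)}h^{(\gamma-1)\al/2}\|u\|_{H^\gamma\II}=c\,h^{\gamma-\al/2}\|u\|_{H^\gamma\II}.
\]
In the range $\al/2\le\gamma\le1$ I would instead interpolate between $L^2\II$ and $H^\gamma\II$ with $\theta=\al/(2\gamma)\in(0,1]$, using $[L^2\II,H^\gamma\II]_{\theta}=H^{\al/2}\II$, which yields $\|u-I_hu\|_{H^{\al/2}\II}\le c\,h^{\gamma(1-\theta)}\|u\|_{H^\gamma\II}=c\,h^{\gamma-\al/2}\|u\|_{H^\gamma\II}$. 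Combining the two ranges and taking the infimum over $V_h$ completes the proof.

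This lemma is essentially standard, so I do not expect a serious obstacle; the one point requiring care is the correct invocation of the interpolation‑space identities for the fractional index $\al/2$ — namely that the restriction space $H^{\al/2}\II$ coincides, with equivalent norm, with the real interpolation space between $L^2\II$ and $H^1\II$ (and between $L^2\II$ and $H^\gamma\II$) — and the stability of nodal interpolation in $H^\gamma\II$ for $\gamma>1/2$ on the uniform partition. If one prefers to avoid the pointwise continuity requirement altogether, $I_h$ can be replaced by the Scott--Zhang quasi‑interpolant, which is bounded on $H^s\II$ for all $s\ge0$ and preserves the homogeneous boundary conditions; the two displayed estimates and the interpolation argument then go through verbatim.
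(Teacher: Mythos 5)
Your proof is correct; the paper itself gives no argument for this lemma, stating it only as a well-known approximation property of $V_h$, and your nodal-interpolation-plus-operator/space-interpolation argument is precisely the standard proof being implicitly invoked. You also correctly identify the two points that need care — the identification of $H^{\al/2}\II$ with the real interpolation space (unproblematic since $\al/2\in(1/2,1)$ avoids the $1/2$ endpoint) and the role of the hypothesis $u\in\Hd{\al/2}$ in guaranteeing $u(0)=u(1)=0$ so that $I_hu\in V_h$.
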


The Galerkin FEM problem is to find $u_h\in V_h$ such that
\begin{equation}\label{eqn:FEM1}
    a(u_h,v)=(f,v) \quad \forall v \in V_h.
\end{equation}
The discrete approximation $u_h$ exists and is unique for small $h$, and further, it
satisfies the following error estimates \cite[Theorem
5.2]{JinLazarovPasciak:2013a}. We note that the theorem has the flavor of Mikhlin's
theorem, in the sense of ``asymptotic stability''.
\begin{theorem}\label{thm:femrl}
Let Assumption \ref{ass:riem} hold, $f\in L^2\II$, and $q\in L^\infty\II$.
Then there is an $h_0>0$ such that for all $h\le h_0$, problem \eqref{eqn:FEM1}
has a unique solution $u_h\in V_h$ and it satisfies for any $\beta\in[1-\alpha/2,1/2)$
\begin{equation*}
   \|u-u_h\|_{L^2\II} + h^{\alpha/2-1+\beta}\|u-u_h\|_{H^{\alpha/2}\II}
   \le Ch^{\alpha-2+2\beta} \|f\|_{L^2\II}.
\end{equation*}
\end{theorem}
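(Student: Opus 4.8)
The plan is to obtain the energy-norm bound by a Galerkin orthogonality argument resting on the coercivity \eqref{A-coercive} of $A(\cdot,\cdot)$, then to upgrade it to the $L^2\II$ bound by an Aubin--Nitsche duality argument, after first securing the well-posedness of the discrete problem \eqref{eqn:FEM1} for small $h$. Since $q$ need not be sign-definite, $a(\cdot,\cdot)$ is not coercive but only satisfies a G\r{a}rding-type inequality coming from \eqref{A-coercive} and $q\in L^\infty\II$, together with the continuity \eqref{continuous}. Under Assumption \ref{ass:riem} the continuous problem \eqref{varrl} and its adjoint are uniquely solvable, and (as recorded after \eqref{rl-qneq0}) the solution operator maps $L^2\II$ boundedly into $\Hdi0{\al-1+\beta}$, which embeds compactly into $L^2\II$. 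A standard asymptotic-stability (Schatz-type) argument then produces, for all $h\le h_0$, a discrete inf--sup condition for $a(\cdot,\cdot)$ on $V_h$, so that \eqref{eqn:FEM1} has a unique solution $u_h$.

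From this inf--sup condition and the Galerkin orthogonality $a(u-u_h,v)=0$ for all $v\in V_h$, a quasi-optimality estimate follows in which the non-coercive perturbation $(q\,\cdot\,,\cdot)$ is absorbed through the $L^2\II$ error:
\[
  \|u-u_h\|_{\Hd{\al/2}}\le C\inf_{v\in V_h}\|u-v\|_{\Hd{\al/2}}+C\|u-u_h\|_{L^2\II};
\]
here and below $\|\cdot\|_{\Hd{\al/2}}$ and $\|\cdot\|_{H^{\al/2}\II}$ may be used interchangeably on $V$. Inserting the regularity $\|u\|_{\Hdi0{\al-1+\beta}}\le C\|f\|_{L^2\II}$ implied by \eqref{eqn:sols} into Lemma \ref{fem-interp-U} with $\gamma=\al-1+\beta$ (legitimate since $\al/2\le\gamma<2$, using $\beta\ge1-\al/2$ and $\al<2$) gives $\inf_{v\in V_h}\|u-v\|_{\Hd{\al/2}}\le Ch^{\al/2-1+\beta}\|f\|_{L^2\II}$, hence
\[
  \|u-u_h\|_{\Hd{\al/2}}\le Ch^{\al/2-1+\beta}\|f\|_{L^2\II}+C\|u-u_h\|_{L^2\II}.
\]

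For the $L^2\II$ bound, put $\psi=u-u_h$ and let $w\in V$ solve the adjoint problem $a(v,w)=(v,\psi)$ for all $v\in V$; in strong form this is $-\DDR1\al w+qw=\psi$ with $w(0)=w(1)=0$, so $w$ admits the right-sided analogue of the representation \eqref{eqn:sols} (with singular factor $(1-x)^{\al-1}$) and the bound $\|w\|_{\Hdi1{\al-1+\beta}}\le C\|\psi\|_{L^2\II}$. Then, by Galerkin orthogonality, the continuity \eqref{continuous}, and Lemma \ref{fem-interp-U} applied to $w$,
\begin{align*}
  \|\psi\|_{L^2\II}^2=a(\psi,w)=a(\psi,w-w_h) &\le C\|\psi\|_{\Hd{\al/2}}\inf_{w_h\in V_h}\|w-w_h\|_{\Hd{\al/2}}\\
  &\le Ch^{\al/2-1+\beta}\|\psi\|_{\Hd{\al/2}}\|\psi\|_{L^2\II},
\end{align*}
whence $\|u-u_h\|_{L^2\II}\le Ch^{\al/2-1+\beta}\|u-u_h\|_{\Hd{\al/2}}$. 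Substituting this into the energy estimate and, for $\beta>1-\al/2$ (the case $\beta=1-\al/2$ being the trivial stability bound), shrinking $h_0$ so that $Ch_0^{\al/2-1+\beta}\le1/2$, we absorb the $L^2\II$ term on the left and get $\|u-u_h\|_{\Hd{\al/2}}\le Ch^{\al/2-1+\beta}\|f\|_{L^2\II}$; consequently both $\|u-u_h\|_{L^2\II}$ and $h^{\al/2-1+\beta}\|u-u_h\|_{\Hd{\al/2}}$ are bounded by $Ch^{\al-2+2\beta}\|f\|_{L^2\II}$, which is the assertion.

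The main obstacle, as is typical for non-coercive problems, is the discrete well-posedness together with the companion absorption step: the threshold $h_0$ is not explicit, and the argument relies on the compact embedding $\Hdi0{\al-1+\beta}\hookrightarrow L^2\II$ and on the unique solvability of \eqref{varrl} and its adjoint under Assumption \ref{ass:riem}. A secondary technical point is that the duality step genuinely needs the sharp adjoint regularity index $\al-1+\beta$, which has to be read off from the right-sided counterpart of the explicit representation \eqref{eqn:sols} rather than from any abstract elliptic regularity theory.
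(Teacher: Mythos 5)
Your proposal is correct and follows essentially the same route the paper relies on: the paper does not reprove Theorem \ref{thm:femrl} but quotes it from \cite{JinLazarovPasciak:2013a}, and the argument there (and in the paper's own proofs of the analogous Theorems \ref{thm:regerror} and \ref{lem:femerror}) is exactly your combination of a Schatz-type discrete inf--sup condition for small $h$, C\'ea/quasi-optimality with the interpolation bound of Lemma \ref{fem-interp-U} at $\gamma=\al-1+\beta$, and Nitsche duality using the limited regularity $w\in\Hdi1{\al-1+\beta}$ of the adjoint solution. The only cosmetic difference is that once the discrete inf--sup condition is in hand the extra $C\|u-u_h\|_{L^2\II}$ term and the subsequent absorption are unnecessary, since quasi-optimality in $\Hd{\al/2}$ follows directly.
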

\begin{remark}\label{rmk:conv-standard}
Due to the inherent presence of the singular term $x^{\alpha-1}$, the solution $u$ has rather low global
regularity, especially for $\alpha$ close to unity, even if the source term $f$ is smooth. Hence, the convergence rate of the Galerkin approximation
$u_h$ based on the formulation \eqref{eqn:FEM1} is slow: for $\alpha$ close to unity, the theoretical
rate is close to zero. This necessitates the development of new techniques with enhanced convergence rates.
\end{remark}

Next we derive a new $L^\infty\II$-error estimate for the Galerkin approximation $u_h$. To this end, we first
recall Green's function to the adjoint problem of \eqref{strongp}, i.e., for all $x\in D$
\begin{equation}
\begin{split}
  - {_{y}^{ \kern -.1em R} \kern -.2em D^{\alpha}_{\kern -.1em 1}} G(x,y) + q(y) G(x,y) & = \delta_x(y),\ \ y \in D,\\
  G(x,0)=G(x,1)&=0.
\end{split}
\end{equation}
For the case $q\equiv0$, $G(x,y)$ is available in closed form \cite{BaiLu:2005}.
It follows from the solution representation \eqref{eqn:sols} that
$G(x,y)$ is given explicitly by
\begin{equation}\label{eqn:greenq0}
    G(x,y) = \left\{\begin{array}{ll}\displaystyle
       \frac{(1-y)^{\al-1}x^{\al-1}}{\Ga(\al)}, & 0\le x\le y \le 1;\\[1.6ex]
       \displaystyle
       \frac{(1-y)^{\al-1}x^{\al-1}-(x-y)^{\al-1}}{\Ga(\al)}, & 0\le y\le x \le 1.\\
       \end{array}\right.
\end{equation}
We note that $G(x,\cdot)\in \Hd{\al-1+\beta}$ with $\beta\in[1-\alpha/2,1/2)$.
In fact, the fractional integral ${_yI_1^\alpha}\delta_x(y)$ of the Dirac-delta function
$\delta_x(y)$ is given by
\begin{equation}\label{eqn:intdelta}
    ({_yI_1^{\al}}\delta_x)(y) = \left\{\begin{array}{ll}
      \displaystyle  0, & 0\le x\le y \le 1;\\
      \displaystyle \frac{(x-y)^{\al-1}}{\Ga(\al)}, & 0\le y\le x \le 1.\\
       \end{array}\right.
\end{equation}
which lies in $\widetilde H^{\al-1+\beta}\II$
 with $\beta\in[1-\alpha/2,1/2)$. In the case of a general potential $q\neq 0$, the
weak formulation is to find $G(x,\cdot)\in V$ such that
\begin{equation}\label{eqn:weakgreen}
   a(v, G(x,\cdot))= \langle v,\delta_x\rangle=v(x) \quad \forall v \in V.
\end{equation}
By Sobolev embedding theorem \cite{AdamsFournier:2003}, $\delta_x\in V^*$, the dual space of $V$, and thus
the existence and uniqueness of $G(x,y)\in \Hd{\al/2}$
follows directly from the stability of the variational formulation. Moreover, it satisfies the differential equation
\begin{equation*}
   -{_{y}^{ \kern -.1em R} \kern -.2em D^{\alpha}_{\kern -.1em 1}} G(x,y)   = \delta_x(y)- q(y) G(x,y),\quad y \in  D.
\end{equation*}
Then the facts that $q(\cdot)G(x,\cdot)\in L^2\II$ and ${_yI_1^{\al}}\delta_x \in \Hdi1{\al-1+\beta}$
lead to the desired regularity $G(x,\cdot) \in \Hdi1{\al-1+\beta}$ with $\beta\in[1-\alpha/2,1/2)$.

Now we can state an $L^\infty\II$-error estimate of the Galerkin approximation $u_h$.
\begin{theorem}\label{thm:linf}
Let $q\in L^\infty\II$, Assumption \ref{ass:riem} hold, and $f\in L^2\II$.
Then there is an $h_0>0$ such that for all $h\le h_0$,
the solution $u_h$ to problem \eqref{eqn:FEM1} satisfies for any $\beta\in[1-\alpha/2,1/2)$
\begin{equation*}
   \|u-u_h\|_{L^\infty\II}  \le Ch^{\alpha-2+2\beta} \|f\|_{L^2\II}.
\end{equation*}
\end{theorem}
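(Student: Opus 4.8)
The plan is to prove Theorem \ref{thm:linf} by a duality argument built on the adjoint Green's function $G(x,\cdot)$, exploiting its pointwise reproducing property \eqref{eqn:weakgreen} together with Galerkin orthogonality. Fix $x\in D$. Since $\al/2>1/2$, the embedding $\Hd{\al/2}\hookrightarrow C(\overline D)$ makes the pointwise value $(u-u_h)(x)$ meaningful, and since $u-u_h\in V$ the identity \eqref{eqn:weakgreen} gives $(u-u_h)(x)=a(u-u_h,G(x,\cdot))$. Subtracting \eqref{eqn:FEM1} from \eqref{varrl} yields the orthogonality $a(u-u_h,\chi)=0$ for all $\chi\in V_h$, so $(u-u_h)(x)=a(u-u_h,G(x,\cdot)-\chi)$ for every $\chi\in V_h$. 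The form $a$ is continuous on $V\times V$ (by \eqref{continuous} and $|(qu,v)|\le\|q\|_{L^\infty\II}\|u\|_{L^2\II}\|v\|_{L^2\II}$), hence
$$|(u-u_h)(x)|\le C\,\|u-u_h\|_V\,\inf_{\chi\in V_h}\|G(x,\cdot)-\chi\|_V .$$

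It remains to estimate the two factors. For the first, Theorem \ref{thm:femrl} gives, after dividing by $h^{\al/2-1+\beta}$, the bound $\|u-u_h\|_{\Hd{\al/2}}\le Ch^{\al/2-1+\beta}\|f\|_{L^2\II}$. For the approximation factor I would invoke Lemma \ref{fem-interp-U} with $\gamma=\al-1+\beta$: for $\beta\in[1-\al/2,1/2)$ one has $\al/2\le\gamma<2$, and the regularity recalled before the theorem gives $G(x,\cdot)\in\Hdi1{\al-1+\beta}\cap V\subset H^{\al-1+\beta}\II\cap\Hd{\al/2}$, so that $\inf_{\chi\in V_h}\|G(x,\cdot)-\chi\|_{H^{\al/2}\II}\le Ch^{\al/2-1+\beta}\|G(x,\cdot)\|_{H^{\al-1+\beta}\II}$; on functions vanishing at both endpoints the $\Hd{\al/2}$ and $H^{\al/2}\II$ norms are equivalent (here $\al/2\in(1/2,1)$), so the same bound holds with $\|\cdot\|_V$. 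Granting a uniform bound $\sup_{x\in D}\|G(x,\cdot)\|_{H^{\al-1+\beta}\II}\le C$, multiplying the two estimates and taking the supremum over $x\in D$ produces $\|u-u_h\|_{L^\infty\II}\le Ch^{\al-2+2\beta}\|f\|_{L^2\II}$, which is the claim.

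The main obstacle is exactly this uniform-in-$x$ regularity estimate for the adjoint Green's function; the rest is bookkeeping. When $q\equiv0$ it is immediate from the closed form \eqref{eqn:greenq0}. For general $q$ I would argue as follows. Because $\al/2>1/2$, $\|\delta_x\|_{V^*}=\sup_{\|v\|_V\le1}|v(x)|\le C$ with $C$ independent of $x$, so the well-posedness of \eqref{eqn:weakgreen} furnished by Assumption \ref{ass:riem} gives $\|G(x,\cdot)\|_V\le C$, whence $\|q(\cdot)G(x,\cdot)\|_{L^2\II}\le\|q\|_{L^\infty\II}\|G(x,\cdot)\|_{L^2\II}\le C$. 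Inserting this into the strong form for $G(x,\cdot)$ displayed after \eqref{eqn:weakgreen} and the right-sided counterpart of the representation \eqref{eqn:sols}, one writes $G(x,\cdot)$ as a right-sided fractional integral of $\delta_x-q(\cdot)G(x,\cdot)$ corrected by a multiple of $(1-y)^{\al-1}$, the coefficient being pinned down by $G(x,0)=0$ and therefore bounded uniformly in $x$. Then Theorem \ref{thm:fracop}(c) controls the contribution ${_yI_1^\al}(q(\cdot)G(x,\cdot))$ in $\Hdi1\al\subset\Hdi1{\al-1+\beta}$, the explicit formula \eqref{eqn:intdelta} bounds $\|{_yI_1^\al}\delta_x\|_{\Hdi1{\al-1+\beta}}$ uniformly in $x$, and $(1-y)^{\al-1}\in\Hdi1{\al-1+\beta}$ with a fixed norm since $\beta<1/2$. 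Collecting these terms gives $\sup_{x\in D}\|G(x,\cdot)\|_{\Hdi1{\al-1+\beta}}\le C$, which completes the argument.
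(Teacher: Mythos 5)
Your proposal is correct and follows essentially the same route as the paper: represent $(u-u_h)(x)=a(u-u_h,G(x,\cdot)-\chi)$ via the adjoint Green's function and Galerkin orthogonality, then bound the two factors by Theorem \ref{thm:femrl} and Lemma \ref{fem-interp-U} applied to $G(x,\cdot)\in \Hdi1{\al-1+\beta}$. The only difference is that you spell out the uniform-in-$x$ bound on $\|G(x,\cdot)\|_{\Hdi1{\al-1+\beta}}$, a point the paper's proof absorbs silently into the generic constant; your justification of it is sound.
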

\begin{proof}
Using the weak formulation \eqref{eqn:weakgreen} of $G(x,y)$
and the Galerkin orthogonality, we have for any $w_h\in V_h$
\begin{equation*}
   (u-u_h)(x)= a(u-u_h, G(x,\cdot))=a(u-u_h, G(x,\cdot)-w_h).
\end{equation*}
Then applying \eqref{continuous},
Lemma \ref{fem-interp-U} and Theorem \ref{thm:femrl}, we obtain for any $\beta\in[1-\alpha/2,1/2)$
\begin{equation*}
\begin{split}
|(u-u_h)(x)|&\le C\| u-u_h\|_V\inf_{w_h\in V_h} \|G(x,\cdot)-w_h\|_V \\
  &\le Ch^{\alpha-2+2\beta}\| u \|_{\Hdi0{\alpha-1+\beta}} \le Ch^{\alpha-2+2\beta}\| f \|_{L^2\II}.
\end{split}
\end{equation*}
This completes the proof of the theorem.
\end{proof}

\begin{remark}
It is well known that for the standard second-order two-point boundary value problem, the Galerkin
approximation $u_h$ exhibits superconvergence at the nodes due to the piecewise smoothness of
Green's function \cite{DouglasDupont:1974}. For the fractional counterpart, the Green's function is no
longer piecewise smooth: it is only in $ \widetilde H^{\al-1+\beta}_L\II$ with
$\beta\in[1-\alpha/2,1/2)$ even for $q=0$. Our numerical experiments in Section
\ref{sec:numeric} indicate that the $L^\infty\II$-error estimate in Theorem \ref{thm:linf} is sharp.
\end{remark}

\section{A new variational formulation and well-posedness}\label{sec:formulation}
Now we develop a new approach for the source problem \eqref{strongp} based on singularity splitting,
which is inspired by the interesting work \cite{CaiKim:2001}, developed for the Poisson equation on
the L-shaped domain. We shall derive a new variational formulation for the regular part, and establish
its stability and the enhanced regularity of the regular part.

\subsection{Derivation of the new approach}
The new approach is based on splitting the solution $u$ to problem \eqref{strongp} into a regular
part $\ur$ and a singular part involving $x^{\al-1}$:
\begin{equation*}
  u(x)=\ur + \mu \left(x^{\al-1}-x^2\right).
\end{equation*}
In the splitting, the choice of $x^2$ is arbitrary. We shall assume
\begin{equation*}
  {_0\hspace{-0.3mm}I^\al_x} (q(x^{\al-1}-x^2)) (1) \neq -1.
\end{equation*}
If this condition does not hold, we may replace the choice $x^2$ by any other function $v$ in the space $\Hdi0s$, $s\geq2$, with $v(1)=1$, such
that ${_0\hspace{-0.3mm}I^\al_x} (q(x^{\al-1}-v)) (1) \neq -1$. Now we develop a new variational
formulation to uniquely determine the regular part $\ur$. To motivate this, we deduce from
\eqref{eqn:sols} and \eqref{rl-qneq0} that the solution $u$ can be split into
\begin{equation*}
\begin{split}
u(x) = &-{_0\hspace{-0.3mm}I^\al_x} (f-qu) (x)+ {_0\hspace{-0.3mm}I^\al_x} (f-qu) (1)x^{\al-1}\\
     = &-{_0\hspace{-0.3mm}I^\al_x} (f-qu) (x) + {_0\hspace{-0.3mm}I^\al_x} (f-qu)(1) x^2\\
     &+{_0\hspace{-0.3mm}I^\al_x} (f-qu)(1)(x^{\al-1}- x^2).
\end{split}
\end{equation*}
Let $\mu={_0\hspace{-0.3mm}I^\al_x} (f-qu)(1)$.
We can set the regular part $u^r$ and singular part $u^s$ to
\begin{equation}\label{eqn:reg+sing}
u^r(x)=-{_0\hspace{-0.3mm}I^\al_x} (f-qu) (x) + \mu x^2
\quad\text{and}\quad u^s(x)=x^{\al-1}- x^2,
\end{equation}
respectively. Then obviously,
\begin{equation*}
  u(x)=u^r(x)+\mu u^s(x).
\end{equation*}

To construct a variational problem for the regular part $\ur $, we first derive a reconstruction
formula for the singularity strength $\mu$ in terms of $\ur$. By the definition of $\mu$ and the regular part
$\ur$, cf. \eqref{eqn:reg+sing}, we have
\begin{equation*}
   \mu={_0\hspace{-0.3mm}I^\al_x} \left(f-q(u^r+\mu (x^{\al-1}- x^2))\right) (1).
\end{equation*}
By rearranging terms, $\mu$ can be written as
\begin{equation}\label{eqn:lambda}
  \mu=c_0 \left({_0\hspace{-0.3mm}I^\al_x} (f-qu^r)\right) (1),
\end{equation}
where the constant $c_0$ is defined by
\begin{equation}\label{eqn:c0}
  c_0=\frac{1}{1+{_0\hspace{-0.3mm}I^\al_x} (q(x^{\al-1}-x^2)) (1)}.
\end{equation}
Hence the solution $u$ of problem \eqref{strongp} can be split into
\begin{equation*}
   u=\ur+c_0 \left({_0\hspace{-0.3mm}I^\al_x} (f-q\ur)\right) (1)(x^{\al-1}-x^2).
\end{equation*}
Upon substituting it back into \eqref{strongp} and setting
\begin{equation*}
   c_1(x)= {\DDR 0 \al \left(x^{\alpha-1}-x^2\right)}= -\frac{2}{\Gamma(3-\al)}x^{2-\al},
\end{equation*}
we arrive at the following fractional integro-differential problem for the regular part $\ur$
\begin{equation}\label{eqn:eqnforreg}
  \begin{aligned}
    -\DDR 0 \alpha \ur + q\ur  + Q \left({_0\hspace{-0.3mm}I^\al_x}(q\ur)\right)& (1)  = \tf   \quad \mbox{ in }D,\\
   \ur(0)=\ur(1)=0, &
   \end{aligned}
\end{equation}
where the functions $Q(x)$ and $\tf(x)$ are defined respectively by
\begin{equation}
  \begin{aligned}\label{eqn:Q+tf}
     Q(x)&=c_0c_1(x)-c_0q(x)(x^{\al-1}-x^2)\in L^{\infty}\II,\\
    \tf(x)&=f(x)+c_0c_1(x)\left({_0\hspace{-0.3mm}I^\al_x}f\right)(1)-
       c_0\left({_0\hspace{-0.3mm}I^\al_x}f\right)(1)q(x)(x^{\al-1}-x^2)\in L^2\II.
  \end{aligned}
\end{equation}

\subsection{A new variational formulation and its stability}
For problem \eqref{eqn:eqnforreg} for the regular part $u^r$, we introduce the following bilinear form
\begin{equation}\label{eqn:ar}
    a_r(u,v)= A(u,v)+b(u,v)\quad \forall u,v\in V,
\end{equation}
with the form $b(\cdot,\cdot)$ given by
\begin{equation}\label{eqn:b}
    b(u,v)= (qu,v)+ {_0\hspace{-0.3mm}I^\al_x}(qu)(1)(Q,v)\quad \forall u,v\in V.
\end{equation}
Using Theorem \ref{thm:fracop}(c), $q\in L^{\infty}\II$, and Sobolev embedding theorem
\cite{AdamsFournier:2003}, we deduce
\begin{equation}\label{eqn:bconti}
\begin{split}
   |b(u,v)|  &\le |{_0\hspace{-0.3mm}I^\al_x}(qu)(1)||(Q,v)| + \|qu\|_{L^2\II}\|v\|_{L^2\II}\\
         &\le C \| {_0\hspace{-0.3mm}I^\al_x}(qu)\|_{\Hdi0{\al}}\|Q\|_{L^2\II}\|v\|_{L^2\II} + C\|u\|_{L^2\II}\|v\|_{L^2\II}\\
         & \le C \|u\|_{L^2\II} \|v\|_{L^2\II}.
\end{split}
\end{equation}
That is, $b(\cdot,\cdot)$ is continuous on $L^2\II \times L^2\II$. Thus
the bilinear form $a_r(\cdot,\cdot)$ is continuous on $V\times V$,
i.e., there exists a constant $C > 0$ such that
\begin{equation}\label{eqn:arcontinuous}
    a_r(u,v)\le C \|u \|_V \| v\|_V \quad \forall u,v\in V.
\end{equation}

The variational problem for the regular part $\ur$ reads: find $\ur \in V$ satisfying
\begin{equation}\label{eqn:var2}
 a_r(\ur,v)= (\tf,v) \quad \forall v\in V.
\end{equation}
Once the regular part $u^r$ is determined, the singularity strength $\mu$ can be reconstructed
using formula \eqref{eqn:lambda}. Finally the solution $u$ of the source problem \eqref{strongp}
has the following representation:
\begin{equation}\label{eqn:ucompose}
  u=\ur+\mu(x^{\al-1}-x^2).
\end{equation}

Now we turn to the well-posedness of the variational formulation \eqref{eqn:var2}.
In case of $q=0$, the bilinear form $a_r(\cdot,\cdot)$ is identical with $a(\cdot,\cdot)$.
Thus the well-posednees of problem \eqref{eqn:var2} follows directly from the continuity
and coercivity of the bilinear form. It suffices to consider the case $q\neq 0$. To
this end, we make the following uniqueness assumption on the bilinear form $a_r(\cdot,\cdot)$.
\begin{assumption} \label{ass:riem2}
Let the bilinear form $a_r(u,v)$ with $u,v\in V$ satisfy
\begin{itemize}
 \item[{$\mathrm{(a)}$}]  The problem of finding $u \in V$ such that $a_r(u,v)=0$ for all $v \in V$
           has only the trivial solution $u\equiv 0$.
 \item[{$(\mathrm{a}^\ast)$}] The problem of finding $v \in V$ such that $a_r(u,v)=0$ for all $u \in V$
    has only the trivial solution $v\equiv 0$.
\end{itemize}
\end{assumption}

The next result shows that Assumption \ref{ass:riem} implies Assumption \ref{ass:riem2}(a).
However, the connection between Assumptions \ref{ass:riem} and \ref{ass:riem2}(a$^\ast$) is still unclear.
\begin{lemma}
Assumption \ref{ass:riem} implies Assumption \ref{ass:riem2} $\mathrm{(a)}$.
\end{lemma}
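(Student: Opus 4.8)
The plan is to relate solutions of the homogeneous problem for $a_r(\cdot,\cdot)$ to solutions of the homogeneous problem for $a(\cdot,\cdot)$, to which Assumption \ref{ass:riem}$\,\mathrm{(a)}$ can be applied, and then to collapse this relation. For $w\in V$ set
\[
  \Phi(w):=w-c_0\bigl({_0\hspace{-0.3mm}I^\al_x}(qw)\bigr)(1)\,\us ;
\]
here $\us\in V$, and $\Phi(w)\in V$ because $qw\in L^2\II$, so that ${_0\hspace{-0.3mm}I^\al_x}(qw)\in\Hdi0\al\hookrightarrow C[0,1]$ by Theorem \ref{thm:fracop}(c). Note that $\Phi(w)$ is nothing but the reconstruction \eqref{eqn:ucompose}--\eqref{eqn:lambda} of a putative regular part $w$ in the homogeneous case $f\equiv 0$. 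The identity I would establish is
\[
  a_r(w,v)=a\bigl(\Phi(w),v\bigr)\qquad\text{for all }w,v\in V .
\]

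The only non-routine ingredient is the relation
\[
  a(\us,v)=A(\us,v)+(q\us,v)=-\tfrac{1}{c_0}\,(Q,v)\qquad\text{for all }v\in V ,
\]
equivalently $A(\us,v)=-(c_1,v)$. A direct ``integration by parts'' is unavailable, since $\us$ carries the singular factor $x^{\al-1}\notin\Hdi0\al$; instead I would identify $\us$ through the solution formula \eqref{eqn:sols}. Since $-c_1={\DDR0\al x^2}$ with $x^2\in\Hdi0{2}$, the inversion property of the Riemann--Liouville operators gives ${_0\hspace{-0.3mm}I^\al_x}(-c_1)=x^2$; hence, by \eqref{eqn:sols} with right-hand side $-c_1\in L^2\II$, the (variational $=$ strong) solution of $-{\DDR0\al w}=-c_1$, $w(0)=w(1)=0$, is $-x^2+\bigl({_0\hspace{-0.3mm}I^\al_x}(-c_1)\bigr)(1)\,x^{\al-1}=x^{\al-1}-x^2=\us$, so $A(\us,v)=(-c_1,v)$ for all $v\in V$. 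Recalling $Q=c_0\bigl(c_1-q(x^{\al-1}-x^2)\bigr)$ from \eqref{eqn:Q+tf}, the displayed relation for $a(\us,\cdot)$ follows. Then, by bilinearity together with the definitions \eqref{eqn:ar}--\eqref{eqn:b},
\[
  a\bigl(\Phi(w),v\bigr)=a(w,v)-c_0\bigl({_0\hspace{-0.3mm}I^\al_x}(qw)\bigr)(1)\,a(\us,v)=a(w,v)+\bigl({_0\hspace{-0.3mm}I^\al_x}(qw)\bigr)(1)(Q,v)=a_r(w,v) ,
\]
which is the claimed identity.

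With this identity the lemma follows at once. Let $\ur\in V$ satisfy $a_r(\ur,v)=0$ for all $v\in V$ (this is Assumption \ref{ass:riem2}$\,\mathrm{(a)}$; equivalently, the uniqueness assertion for \eqref{eqn:var2} when $f\equiv0$, since then $\tf\equiv0$ by \eqref{eqn:Q+tf}). Then $a\bigl(\Phi(\ur),v\bigr)=0$ for all $v\in V$, so $\Phi(\ur)=0$ by Assumption \ref{ass:riem}$\,\mathrm{(a)}$; that is, $\ur=\la\,\us$ with $\la:=c_0\bigl({_0\hspace{-0.3mm}I^\al_x}(q\ur)\bigr)(1)$. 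Feeding $\ur=\la\,\us$ back into this relation and writing $A_0:=\bigl({_0\hspace{-0.3mm}I^\al_x}(q(x^{\al-1}-x^2))\bigr)(1)$, we get $\la=c_0\la A_0$, i.e.\ $\la(1-c_0A_0)=0$. Since $c_0=1/(1+A_0)$ is well defined and nonzero by the standing hypothesis $A_0\neq-1$, we have $1-c_0A_0=c_0\neq0$; hence $\la=0$ and therefore $\ur=0$.

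The main obstacle is the relation $A(\us,v)=-(c_1,v)$ for all $v\in V$: because of the singular factor $x^{\al-1}$ one cannot transfer the fractional derivative onto the first argument of $A(\cdot,\cdot)$ by the usual manipulation, and must instead recognize $\us$ as the variational solution produced by \eqref{eqn:sols} with data $-c_1$ (equivalently, that $A(x^{\al-1},v)=0$ for all $v\in V$, which follows from \eqref{eqn:sols} upon choosing an $f$ with $({_0\hspace{-0.3mm}I^\al_x}f)(1)\neq0$). Everything else is linear bookkeeping in the scalar $c_0$.
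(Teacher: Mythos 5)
Your proposal is correct and follows essentially the same route as the paper: both arguments rest on the reconstruction map $w\mapsto w-c_0\bigl({_0\hspace{-0.3mm}I^\al_x}(qw)\bigr)(1)\,\us$ sending the homogeneous $a_r$-problem to the homogeneous $a$-problem (via the identity $A(\us,v)=(-c_1,v)$, which you justify more carefully than the paper does, through the solution representation \eqref{eqn:sols}), followed by Assumption \ref{ass:riem}(a) to conclude $\ur$ is a multiple of $\us$, and finally the non-degeneracy condition defining $c_0$ to kill that multiple. Your write-up is a streamlined, direct version of the paper's contradiction argument (which detours through a nonzero right-hand side $f=c(-c_1+q\us)$), but the mathematical content is the same.
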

\begin{proof}
If $\tf\equiv0$, then by the definition of $\tf$ in \eqref{eqn:Q+tf} we have
\begin{equation*}
  f(x)=-c_0c_1(x)\left({_0\hspace{-0.3mm}I^\al_x}f\right)(1)+
       c_0\left(\left({_0\hspace{-0.3mm}I^\al_x}f\right)(1)q(x)(x^{\al-1}-x^2)\right).
\end{equation*}
We observe that $f$ has an explicit representation for any $c\in \mathbb{R}$
\begin{equation*}
  f=c\left(-c_1(x)+ q(x)(x^{\al-1}-x^2)\right).
\end{equation*}
Now suppose that there exists a $\chi\in V$, $\chi\neq 0$, such that $a_r(\chi,v)=0$ for all $v \in V$.
Then by construction
\begin{equation*}\label{rec}
  u=\chi-c_0 \left({_0\hspace{-0.3mm}I^\al_x} (q\chi)\right) (1)(x^{\al-1}-x^2)
\end{equation*}
is a solution of \eqref{strongp} with the right hand side $f$. Meanwhile,
Assumption \ref{ass:riem} implies that problem \eqref{strongp} has a
unique solution
\begin{equation*}
  u=c(x^{\al-1}-x^2).
\end{equation*}
Then comparing these two solution representations yields that
\begin{equation}\label{chi}
  \chi=c'(x^{\al-1}-x^2).
\end{equation}
where $c'\neq0$. Now using the fact that
\begin{equation*}
A(x^{\al-1}-x^2,v) = (-\DDR 0 \alpha (x^{\al-1}-x^2),v)\quad \forall v\in V,
\end{equation*}
we deduce that $\chi$ satisfies \eqref{eqn:eqnforreg}.
Then plugging \eqref{chi} into \eqref{eqn:eqnforreg} and simple computation yield
\begin{equation*}
   c_0\ {_0\hspace{-0.3mm}I^\al_x} (q(x^{\al-1}-x^2)) (1) =1,
\end{equation*}
which is contradictory to the definition of $c_0$.
\end{proof}

Under Assumption \ref{ass:riem2}, the variational formulation \eqref{eqn:var2} is stable.
\begin{theorem}\label{thm:wpreg}
Let Assumption \ref{ass:riem2} hold and $q\in L^\infty\II$. Then for any $F\in V^*$, there
exists a unique solution $u\in V$ to
\begin{equation}\label{eqn:var3}
  a_r(u,v)= \langle F,v\rangle \quad \forall v\in V.
\end{equation}
\end{theorem}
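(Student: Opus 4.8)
The plan is to realize \eqref{eqn:var3} as a compact perturbation of an invertible problem and then apply the Fredholm alternative, using Assumption \ref{ass:riem2}(a) to promote injectivity to bijectivity. Write $a_r=A+b$ as in \eqref{eqn:ar}. By the coercivity \eqref{A-coercive} and continuity \eqref{continuous} of $A$ on $V$, the Lax--Milgram lemma gives an isomorphism $\mathcal A\colon V\to V^*$ with $\langle \mathcal A u,v\rangle=A(u,v)$. Define $\mathcal B\colon V\to V^*$ by $\langle \mathcal B u,v\rangle=b(u,v)$. The chain of inequalities in \eqref{eqn:bconti} shows in fact that $|b(u,v)|\le C\|u\|_{L^2\II}\|v\|_{L^2\II}$, so $\mathcal B u$ extends to a bounded functional on $L^2\II$ and $\mathcal B$ factors through the inclusion $V\hookrightarrow L^2\II$, with $\|\mathcal B u\|_{V^*}\le C\|u\|_{L^2\II}$.

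The key step is then a compactness observation: since $V=\Hd{\al/2}$ with $\al/2\in(1/2,1)$ and $D$ is a bounded interval, the embedding $V\hookrightarrow L^2\II$ is compact, so $\mathcal B\colon V\to V^*$ is compact and hence $K:=\mathcal A^{-1}\mathcal B\colon V\to V$ is a compact operator. Problem \eqref{eqn:var3} is then equivalent to the operator equation $(I+K)u=\mathcal A^{-1}F$ in $V$, because $a_r(u,v)=\langle(\mathcal A+\mathcal B)u,v\rangle$ for all $v\in V$ and $\mathcal A$ is invertible.

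Finally I would invoke the Fredholm alternative for $I+K$: it is a Fredholm operator of index zero, hence it is bijective as soon as it is injective. But $u\in\ker(I+K)$ means $(\mathcal A+\mathcal B)u=0$, i.e. $a_r(u,v)=0$ for all $v\in V$, which by Assumption \ref{ass:riem2}(a) forces $u\equiv0$; therefore $I+K$ is invertible and $u=(I+K)^{-1}\mathcal A^{-1}F$ is the unique solution of \eqref{eqn:var3}. The only point requiring genuine care is the compactness of $\mathcal B$, which rests on reading off from \eqref{eqn:bconti} the $L^2\II$-continuity of $b$ in both arguments and pairing it with the compact Sobolev embedding; the remainder is a routine Fredholm argument. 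Observe also that only part (a) of Assumption \ref{ass:riem2} is actually used here, since index-zero Fredholmness already converts injectivity into surjectivity, so part (a$^\ast$) plays no role in this proof.
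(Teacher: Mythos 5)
Your proof is correct. The paper's own argument rests on the same decomposition $a_r=A+b$ and the same key compactness observation---there the operator $T$ induced by $-b$ is shown to be compact by splitting it into a Hilbert--Schmidt integral operator plus a multiplication operator composed with the compact embedding $V\hookrightarrow L^2\II$---but it concludes differently: via the Petree--Tartar lemma it derives the inf-sup estimate \eqref{inf-suprl} from the injectivity of $S$ (Assumption \ref{ass:riem2}(a)) and the bound $C_0\|u\|_V^2\le a_r(u,u)-b(u,u)$, and then invokes Assumption \ref{ass:riem2}(a$^\ast$) to obtain existence. Your Fredholm-alternative route replaces that last step: since $\mathcal A+\mathcal B$ is a compact perturbation of an isomorphism, it is Fredholm of index zero, so injectivity (part (a)) alone yields bijectivity; your closing observation that part (a$^\ast$) is redundant is therefore correct, and indeed (a$^\ast$) follows a posteriori because the adjoint of a bijection is a bijection. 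What the paper's formulation buys is the explicit inf-sup inequality \eqref{inf-suprl} with a constant $\delta$, which is reused verbatim in the kick-back argument for the discrete stability in Section \ref{sec:newFEM}; your argument recovers that estimate only indirectly, via the boundedness of $(\mathcal A+\mathcal B)^{-1}$ guaranteed by the open mapping theorem. Both are complete proofs of the theorem as stated.
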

\begin{proof}
The proof is based on Petree-Tartar Lemma \cite[pp. 469, Lemma A.38]{ern-guermond}.
We define two operators $S \in \mathcal{L}(V;V^*)$ and $T \in \mathcal{L}(V;V^*)$ by
\begin{equation*}
  \langle Su,v\rangle=a_r(u,v)\quad \text{and}\quad(Tu,v)=-b(u,v),
\end{equation*}
respectively. By Assumption \ref{ass:riem2}(a), the operator $S$ is injective. Further,
\begin{equation*}
  \begin{split}
   (Tu)(x)& = -\int_0^1 \frac{Q(x)q(y)(1-y)^{\al-1}}{\Ga(\al)}u(y)\,dy-q(x)u(x)\\
    &=: (T_1 u)(x) + (T_2u)(x).
  \end{split}
\end{equation*}
Now $Q\in L^\infty\II$ implies that $T_1$ is a Hilbert-Schmidt operator
and hence compact \cite[pp. 277, example 2]{Yoshida:1980}. 
Further, $T_2$ is also compact in view of the assumption $q\in L^\infty\II$ and the compactness of
the embedding from $V$ into $L^2\II$. Thus $T$ is a compact operator from $V$ to $L^2\II$.
By the coercivity \eqref{A-coercive} and continuity \eqref{continuous} of the
bilinear form $A(\cdot,\cdot)$, we obtain
\begin{equation*}
    C_0\|u\|_{V}^2 \le A(u,u)= a_r(u,u)-b(u,u)
    \le C \left(\| Tu \|_{V^\ast}  +  \| Su  \|_{V^*}\right) \| u \|_V,
\end{equation*}
Now the Petree-Tartar lemma immediately implies that the image of the operator $S$ is closed;
equivalently, there exists a constant $\delta>0$
satisfying
\begin{equation}\label{inf-suprl}
  \delta  \|u\|_V \le \sup_{v\in V } \frac {a_r(u,v)} {\|v\|_V}.
\end{equation}
This together with Assumption \ref{ass:riem2}$(\hbox{a}^\ast)$ yields the unique existence of
a solution $u\in U$ to the weak form \eqref{eqn:var3}.
\end{proof}

Now we state an improved regularity result for the case $\langle F,v\rangle
=(\tf,v)$, for some $\tf \in \Hd \gamma$, $0\leq \gamma\leq 1$, $\gamma\neq1/2$.
\begin{theorem}\label{thm:regrl-new}
Let Assumption \ref{ass:riem2} hold, $q\in \Hd{\ga}\cap L^\infty\II$ and $f\in \Hd{\ga}$
with $ 0\le \ga \le1$, $\gamma\neq 1/2$. There exists a unique solution $\ur\in \Hd {\al/2}$ to problem
\eqref{eqn:var2} and further, for any $\beta\in[1-\alpha/2,1/2)$, with $\ell(\beta,\gamma)=:\min(\al-1+\beta,\ga)$, it satisfies
\begin{equation*}
  \| \ur \|_{H^ {\alpha+\ell(\beta,\gamma)}\II} \le C \|f\|_{\Hd\ga}.
\end{equation*}
\end{theorem}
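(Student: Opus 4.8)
The plan is to bootstrap regularity from the already-established well-posedness in $V=\Hd{\al/2}$ (Theorem \ref{thm:wpreg}) by reading off the strong form \eqref{eqn:eqnforreg} and applying the mapping properties of the fractional integral operator from Theorem \ref{thm:fracop}(c), together with the algebraic property of $\Hd s$ in Lemma \ref{lem:Hmulti}. First I would invoke Theorem \ref{thm:wpreg} with $F=\tf$ (note $\tf\in L^2\II$, indeed $\tf\in\Hd\ga$ by \eqref{eqn:Q+tf}, $q\in\Hd\ga\cap L^\infty\II$, and Lemma \ref{lem:Hmulti}, since $x^{\al-1}-x^2\in\Hd{\al-1+\beta}\subset\Hd\ga\cap L^\infty\II$ when $\ga\le\al-1+\beta$, while for larger $\ga$ the regularity of $\tf$ is limited by that of $x^{\al-1}-x^2$, giving $\tf\in\Hd{\ell(\beta,\gamma)}$ — this is exactly why $\ell$ appears) to obtain a unique $\ur\in V$. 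By the argument already given in the excerpt for \eqref{rl-qneq0}, this variational solution is a strong solution of \eqref{eqn:eqnforreg}.

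Next I would rewrite \eqref{eqn:eqnforreg} as an identity for $-\DDR0\alpha\ur$, namely
\begin{equation*}
  -\DDR0\alpha \ur = \tf - q\ur - Q\,\big({_0I_x^\al}(q\ur)\big)(1),
\end{equation*}
and then, exactly as in the derivation of \eqref{eqn:sols}, represent $\ur$ explicitly as
\begin{equation*}
  \ur(x) = -\big({_0I_x^\al}g\big)(x) + \big({_0I_x^\al}g\big)(1)\,x^2,
  \qquad g := \tf - q\ur - Q\,\big({_0I_x^\al}(q\ur)\big)(1),
\end{equation*}
where the boundary term is $x^2$ rather than $x^{\al-1}$ precisely because the singularity has been removed — this is the whole point of the splitting, and it is what lets the regularity exceed $H^{\al-1+\beta}$. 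Since $x^2\in\Hd s$ for all $s<5/2$, the $x^2$ term contributes regularity at least $H^{\al+\ell(\beta,\gamma)}$ (as $\al+\ell\le\al+1<5/2$), so the regularity of $\ur$ is governed entirely by ${_0I_x^\al}g$, and by Theorem \ref{thm:fracop}(c) this has regularity $\al$ higher than that of $g$.

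It then remains to show $g\in\Hd{\ell(\beta,\gamma)}$. The term $\tf$ lies in $\Hd{\ell(\beta,\gamma)}$ as discussed. For $q\ur$: a first pass uses only $\ur\in V=\Hd{\al/2}$ and Lemma \ref{lem:Hmulti} (with $q\in\Hd\ga\cap L^\infty$, $s=\min(\al/2,\ga)$) to get $q\ur\in\Hd{\min(\al/2,\ga)}$, hence ${_0I_x^\al}g$ and so $\ur$ gains to $H^{\al+\min(\al/2,\ga)}\cap\Hd{\al/2}$; feeding this improved $\ur$ back — a standard bootstrap — after finitely many steps yields $q\ur\in\Hd{\ell(\beta,\gamma)}$, the ceiling being set by $\ga$ (when $\ga$ is the minimum) or by the fact that once $\ur\in H^{\al-1+\beta+\epsilon}$ the product regularity saturates. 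Finally ${_0I_x^\al}(q\ur)\in\Hd{\al+\ell}\hookrightarrow C(\bar D)$, so $\big({_0I_x^\al}(q\ur)\big)(1)$ is a well-defined scalar and $Q\cdot(\text{scalar})\in L^\infty\II$; but $Q=c_0c_1-c_0q(x^{\al-1}-x^2)$ with $c_1(x)=-\tfrac{2}{\Gamma(3-\al)}x^{2-\al}$, so $Q\in\Hd{\min(2-\al+\text{something},\ga,\al-1+\beta)}$ — one checks $x^{2-\al}\in\Hd s$ for $s<5/2-\al$ and $5/2-\al>1/2\ge\ell$, and $q(x^{\al-1}-x^2)\in\Hd{\min(\ga,\al-1+\beta)}$ by Lemma \ref{lem:Hmulti} — hence the last term of $g$ lies in $\Hd{\ell(\beta,\gamma)}$ as well. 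Assembling, $g\in\Hd{\ell(\beta,\gamma)}$, so $\ur={_0I_x^\al}g\,(\text{up to the }x^2\text{ term})\in H^{\al+\ell(\beta,\gamma)}\II$ with the stated bound, tracking constants through Theorem \ref{thm:fracop}(c), Lemma \ref{lem:Hmulti}, and the stability estimate \eqref{inf-suprl}.

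\textbf{Main obstacle.} The delicate point is pinning down the exact exponent $\ell(\beta,\gamma)=\min(\al-1+\beta,\ga)$ rather than something smaller: one must verify that $x^{2-\al}$ (and $x^{\al-1}-x^2$) carries enough Sobolev regularity that it never becomes the bottleneck — i.e. that $5/2-\al$ and $\al-1+\beta$ both dominate $\ell$ — and that the bootstrap for $q\ur$ actually terminates at $\ell$; the half-integer exclusion $\ga\neq1/2$ and the restriction $\ga\le1$ are exactly what keep Lemma \ref{lem:Hmulti} applicable throughout, so care is needed that each intermediate multiplication stays in the admissible range.
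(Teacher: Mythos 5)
Your overall strategy (read the regularity off an explicit representation and lift by ${_0I_x^\al}$) is in the right spirit, but it diverges from the paper's proof in a way that introduces two genuine gaps. The paper does not work with the integro-differential equation \eqref{eqn:eqnforreg} for $\ur$ at all: it identifies the variational solution with the constructed regular part \eqref{eqn:reg+sing}, $\ur=-{_0I_x^\al}(f-qu)+\mu x^2$, where $u$ is the \emph{full} solution with its known regularity $u\in\Hdi0{\al-1+\beta}$; then Lemma \ref{lem:Hmulti} gives $qu\in\Hdi0{\min(\ga,\al-1+\beta)}$ and Theorem \ref{thm:fracop}(c) finishes in one step. This also shows where $\ell(\beta,\gamma)$ really comes from: it is the regularity of $qu$ for the singular full solution $u$, not the saturation point of a bootstrap for $q\ur$.

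The two gaps are these. (i) Your representation $\ur=-{_0I_x^\al}g+({_0I_x^\al}g)(1)\,x^2$ is not ``exactly as in the derivation of \eqref{eqn:sols}'': the homogeneous solution of $-\DDR0\al v=0$ vanishing at the origin is $x^{\al-1}$, not $x^2$, so the general solution of the strong form of \eqref{eqn:eqnforreg} is $-{_0I_x^\al}g+({_0I_x^\al}g)(1)\,x^{\al-1}$. Your formula happens to be true only because the coefficient $({_0I_x^\al}g)(1)$ vanishes for this particular $g$ (a computation using ${_0I_x^\al}x^{2-\al}=\tfrac{\Gamma(3-\al)}{2}x^2$ and the definition of $c_0$), and you neither prove this nor invoke the identification with \eqref{eqn:reg+sing} that would make it unnecessary; without it, a nonzero $x^{\al-1}$ component would destroy the claimed $H^{\al+\ell}$ regularity. (ii) The inequality ``$5/2-\al>1/2\ge\ell$'' that you use to place the $c_1(x)=cx^{2-\al}$ terms of $\tf$ and $Q$ into $\Hd{\ell}$ is false: $\ell=\min(\al-1+\beta,\ga)$ can be as large as $1$ (e.g.\ $\ga=1$, $\al=1.9$, $\beta$ near $1/2$ gives $\ell$ near $1$, while $5/2-\al=0.6$), so $c_1\notin\Hd{\ell}$ in general and Theorem \ref{thm:fracop}(c) cannot be applied to it; the conclusion survives only because ${_0I_x^\al}x^{2-\al}$ is explicitly a multiple of $x^2$, which must be used directly, and the same issue invalidates your side claim that $\tf\in\Hd\ga$. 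Both gaps are repairable, but the cleanest repair is precisely the paper's route through $f-qu$, which also removes the need for your (otherwise workable but more delicate) bootstrap.
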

\begin{proof}
The unique existence of a solution $\ur \in V$ follows from
Theorem \ref{thm:wpreg}. Hence, it suffices to show the stability estimate.
By its construction, the solution $\ur $ is of the form \eqref{eqn:reg+sing}.
By $q \in \Hd {\ga}\cap L^\infty\II$ and $u\in \Hdi0 {\al-1+\beta}$,
and by Lemma \ref{lem:Hmulti}, we deduce $qu \in \Hdi0 {\min(\ga, \al-1+\beta)}$.
Now with Theorem \ref{thm:fracop}(c), ${_0\hspace{-0.3mm}I^\al_x}(f-qu)\in
\Hdi0{\alpha+\ga(\beta)}$.
\end{proof}

\begin{corollary}
Let Assumption \ref{ass:riem2} hold, $\ur$ be the solution of \eqref{eqn:var2}, and the
singularity strength $\mu$ be defined by \eqref{eqn:lambda}. Then $u=\ur+\mu(x^{\al-1}-x^2)$ is the solution of \eqref{strongp}.
\end{corollary}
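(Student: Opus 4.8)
The plan is to check directly that $u:=\ur+\mu\,\us$, where $\us(x)=x^{\al-1}-x^2$ and $\mu$ is given by \eqref{eqn:lambda}, solves the variational problem \eqref{varrl}, i.e.\ that $a(u,v)=(f,v)$ for all $v\in V$, with $a(u,v)=A(u,v)+(qu,v)$. Note first that $u\in V$, since $\ur\in V$ (it is the solution of \eqref{eqn:var2}, cf.\ Theorem~\ref{thm:wpreg}) and $\us$ vanishes at both endpoints. As observed right after Assumption~\ref{ass:riem}, a variational solution of \eqref{varrl} is automatically a strong solution of \eqref{strongp}; hence establishing this variational identity, together with uniqueness under Assumption~\ref{ass:riem}, proves the claim.

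To prove the identity I would expand by bilinearity,
\[
a(u,v)=\big[A(\ur,v)+(q\ur,v)\big]+\mu\big[A(\us,v)+(q\us,v)\big].
\]
For the singular part I use the duality relation $A(\us,v)=\big(-\DDR0\al\us,\,v\big)=(-c_1,v)$ for all $v\in V$ — the very integration-by-parts identity invoked in the lemma preceding Theorem~\ref{thm:wpreg}, legitimate since $\us$ has enough one-sided regularity (Theorem~\ref{thm:fracop}). For the regular part I use \eqref{eqn:var2}, namely $A(\ur,v)+b(\ur,v)=(\tf,v)$ with $b$ as in \eqref{eqn:b}, to replace $A(\ur,v)+(q\ur,v)$ by $(\tf,v)-\big({_0I_x^\al}(q\ur)\big)(1)\,(Q,v)$. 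Then I would substitute the definitions: from \eqref{eqn:Q+tf} and the formula for $c_1$ one reads off $Q=c_0\big(c_1-q\us\big)$ and $\tf=f+\big({_0I_x^\al}f\big)(1)\,Q$, while \eqref{eqn:lambda} gives $\big({_0I_x^\al}f\big)(1)-\big({_0I_x^\al}(q\ur)\big)(1)=\mu/c_0$. Collecting terms yields
\[
a(u,v)=(f,v)+\tfrac{\mu}{c_0}(Q,v)-\mu\,(c_1-q\us,\,v)=(f,v),
\]
the last equality following from $\tfrac1{c_0}Q=c_1-q\us$. Hence $u$ solves \eqref{varrl}, and is therefore the (unique, under Assumption~\ref{ass:riem}) solution of \eqref{strongp}.

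I do not expect a genuine difficulty: once the pieces are assembled the argument is pure bookkeeping. The only step that needs a word of justification is the duality identity $A(\us,v)=(-\DDR0\al\us,v)$ on $V$, which is the structural fact making the splitting \eqref{eqn:reg+sing} consistent; it is exactly the identity used and justified in the proof of the preceding lemma, resting on the mapping properties in Theorem~\ref{thm:fracop}. One should also recall at the outset that $c_0$ in \eqref{eqn:c0} is well defined, by virtue of the standing assumption ${_0I_x^\al}\big(q(x^{\al-1}-x^2)\big)(1)\neq-1$.
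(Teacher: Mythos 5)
Your computation is correct, and I checked the bookkeeping: with $Q=c_0(c_1-q\us)$, $\tf=f+({_0\hspace{-0.3mm}I^\al_x}f)(1)\,Q$ and $\mu/c_0=({_0\hspace{-0.3mm}I^\al_x}f)(1)-({_0\hspace{-0.3mm}I^\al_x}(q\ur))(1)$, the two residual terms cancel exactly and $a(u,v)=(f,v)$ follows. The paper itself offers no proof of this corollary; it is presented as an immediate consequence of the \emph{forward} derivation in Section \ref{sec:formulation} (if $u$ solves \eqref{strongp}, its regular part solves \eqref{eqn:var2} and $\mu$ satisfies \eqref{eqn:lambda}) combined with the uniqueness of $\ur$ from Theorem \ref{thm:wpreg}. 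You instead prove the \emph{converse} directly, substituting $\ur+\mu\us$ into \eqref{varrl} and using $A(\us,v)=(-\DDR0\al\us,v)$ — the same identity the paper invokes in the lemma relating Assumptions \ref{ass:riem} and \ref{ass:riem2}. Your route buys something real: it establishes that \eqref{strongp} \emph{has} a solution under Assumption \ref{ass:riem2} alone, whereas the paper's implicit argument presupposes solvability of \eqref{strongp} (which rests on Assumption \ref{ass:riem}). The one caveat, which you correctly flag, is that calling $u$ \emph{the} solution requires uniqueness for \eqref{strongp}, i.e.\ Assumption \ref{ass:riem}(a), which is not literally among the corollary's hypotheses; your proof shows $u$ is \emph{a} solution, which is the substantive content.
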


\subsection{Adjoint problem}\label{secc:dual}
To derive error estimates for the Galerkin approximation $u_h^r$ of the regular part $u^r$ in Section \ref{sec:newFEM} below,
it is useful to consider the adjoint problem to \eqref{eqn:var2}. For $F \in V^*$, the dual
problem is to find $w\in V$ such that
\begin{equation}\label{eqn:dual}
    a_r(v,w)=\langle v,F \rangle \quad\forall v \in V.
\end{equation}
In the case of $\langle v,F \rangle = (v,f)$ for some $f \in L^2\II$, the strong form reads
\begin{equation}\label{eqn:dualstrong}
  \begin{aligned}
    -\DDR1 \al w + q  w  + \frac{(1-x)^{\al-1}q}{\Ga(\al)} (Q,w)&= f  \quad  \mbox{ in } D,\\
    w(0)=w(1)&=0.
  \end{aligned}
\end{equation}
For the case $q\equiv0$, the solution $w$ to \eqref{eqn:dualstrong} is given by
\begin{equation}\label{eqn:dualsol}
  w=-({_x\hspace{-0.3mm}I^\al_1}f)(x)+({_x\hspace{-0.3mm}I^\al_1}f)(0)(1-x)^{\al-1},
\end{equation}
and hence, $w\in \Hdi 1 {\al-1+\beta}\cap \Hd {\al/2}$. The case $q\neq0$ is treated
in the following theorem.
\begin{theorem}\label{thm:dualreg}
Let $q\in L^\infty\II$, and Assumption \ref{ass:riem2} hold. Then with a right hand side $\langle v,F \rangle = (v,f)$
for some $f \in L^2\II$, the solution $w$ to problem \eqref{eqn:dual} belongs to
$\Hdi1{\alpha-1+\beta} \cap \Hd{\alpha/2}$ and satisfies for any $\beta\in[1-\alpha/2,1/2)$
\begin{equation*}
  \|w\|_{\Hdi1 {\alpha-1+\beta}} \le C \|f\|_{L^2\II},
\end{equation*}
\end{theorem}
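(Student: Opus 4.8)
The plan is to transfer to the right-sided (adjoint) setting the bootstrapping argument used after~\eqref{rl-qneq0} and in the proof of Theorem~\ref{thm:regrl-new}: first produce a variational solution with a $V$-bound, then identify it with the function given by an explicit solution formula, and finally read off its regularity.

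First I would obtain the variational solution. Running the argument of Theorem~\ref{thm:wpreg} with the two arguments of $a_r$ interchanged --- its proof actually shows that $a_r$ induces an isomorphism $V\to V^*$, so the transposed problem~\eqref{eqn:dual} is equally well posed; the coercivity and continuity of $A$ are symmetric, $b$ induces a compact operator on either slot, and Assumption~\ref{ass:riem2} is symmetric in $\mathrm{(a)}$ and $(\mathrm{a}^\ast)$ --- yields a unique $w\in V=\Hd{\al/2}$ solving~\eqref{eqn:dual}, with $\|w\|_V\le C\|f\|_{L^2\II}$, the right-hand side functional $v\mapsto(v,f)$ having $V^*$-norm at most $\|f\|_{L^2\II}$.

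Next I would cast the equation as $-\DDR1\al w=\widetilde g$ with
\[
  \widetilde g:=f-qw-\frac{(1-x)^{\al-1}q}{\Ga(\al)}(Q,w),
\]
cf.~\eqref{eqn:dualstrong}, and record that $\widetilde g\in L^2\II$ with $\|\widetilde g\|_{L^2\II}\le C\|f\|_{L^2\II}$: indeed $qw\in L^2\II$ since $q\in L^\infty\II$ and $w\in V\hookrightarrow L^2\II$, the scalar $(Q,w)$ is bounded by $\|Q\|_{L^2\II}\|w\|_{L^2\II}$, and $(1-x)^{\al-1}q\in L^2\II$ because $2(\al-1)>-1$. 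The key step is to check that $w$ is exactly the function furnished by the right-sided solution formula. Using the elementary identity $({_0I_x^\al}(qv))(1)=\bigl(v,\,(1-x)^{\al-1}q/\Ga(\al)\bigr)$ --- both sides equal $\Ga(\al)^{-1}\int_0^1(1-t)^{\al-1}q(t)v(t)\,dt$ --- the variational equation $a_r(v,w)=(v,f)$, $v\in V$, rearranges into $A(v,w)=(v,\widetilde g)$ for all $v\in V$. Hence $w$ is the weak solution in $V$ of the simple right-sided problem $-\DDR1\al w=\widetilde g$, $w(0)=w(1)=0$, unique by the coercivity~\eqref{A-coercive} of $A$; since by fractional integration by parts the function in~\eqref{eqn:dualsol} with $f$ replaced by $\widetilde g$ solves this weak form, we conclude
\[
  w(x)=-({_xI_1^\al}\widetilde g)(x)+({_xI_1^\al}\widetilde g)(0)\,(1-x)^{\al-1}.
\]

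Finally I would read off the regularity. By Theorem~\ref{thm:fracop}(c) with $s=0$, ${_xI_1^\al}\widetilde g\in\Hdi1{\al}$, and in particular $|({_xI_1^\al}\widetilde g)(0)|\le C\|{_xI_1^\al}\widetilde g\|_{\Hdi1{\al}}\le C\|\widetilde g\|_{L^2\II}$ by the Sobolev embedding $\Hdi1{\al}\hookrightarrow C[0,1]$ (as $\al>1/2$); the right-sided singular function $(1-x)^{\al-1}$ lies in $\Hdi1{\al-1+\beta}$ for every $\beta\in[1-\al/2,1/2)$, as already recorded after~\eqref{eqn:dualsol}. Since $\al-1+\beta<\al$, the first term is at least as regular, so $w\in\Hdi1{\al-1+\beta}$ with $\|w\|_{\Hdi1{\al-1+\beta}}\le C\|\widetilde g\|_{L^2\II}\le C\|f\|_{L^2\II}$, while $w\in\Hd{\al/2}$ is already contained in $w\in V$. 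The routine parts here are the norm bounds; the point requiring care is the identification of $w$ with the explicit formula --- it works because, once $w$ is fixed, $\widetilde g$ involves $w$ only through the $L^2$-function $qw$ and the scalar $(Q,w)$, so the nonlocal coupling degenerates to an $L^2$-datum and the problem reduces to the plain right-sided equation, whose solution is then pinned down by coercivity.
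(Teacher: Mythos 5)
Your proposal is correct and follows essentially the same route as the paper: rewrite the adjoint equation as a plain right-sided problem $-\DDR1\al w=\bar f$ with an $L^2\II$ right-hand side, and read off the regularity from the representation \eqref{eqn:dualsol} together with Theorem \ref{thm:fracop}(c). The extra details you supply --- the well-posedness of the transposed problem, the identity $({_0I_x^\al}(qv))(1)=(v,(1-x)^{\al-1}q/\Ga(\al))$ identifying the variational solution with the strong form, and the resulting bound $\|\bar f\|_{L^2\II}\le C\|f\|_{L^2\II}$ --- are exactly the steps the paper leaves implicit.
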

\begin{proof}
The strong problem for $w$ can be rewritten as
\begin{equation*}
    -\DDR1 \al w(x)=\bar f:=-  q(x) w(x) -
    \frac{(1-x)^{\al-1}q(x)}{\Ga(\al)} (Q,w) + f(x) ,\quad x \in D,
\end{equation*}
with the boundary condition $w(0)=w(1)=0$. Since $q\in L^\infty\II$
and $w\in \Hd {\al/2}$, there holds $qw\in L^2\II$, and the source term $\bar f
\in L^2\II$. Now the desired result follows directly from the representation \eqref{eqn:dualsol}
and Theorem \ref{thm:fracop}(c).
\end{proof}

\begin{remark}
In general, the best possible regularity of the solution $w$ to the adjoint problem \eqref{eqn:dual} lies in
$\Hdi1 {\al-1+\beta}$ for $\beta\in [1-\alpha/2,1)$ due to the presence of the singular term $(1-x)^{\al-1}$.
The only possibility of full regularity is the case $({_x\hspace{-0.3mm}I^\al_1}f)(0) = 0$ (for $q = 0$).
\end{remark}

\section{Galerkin FEM for the new formulation}\label{sec:newFEM}
Now we apply the variational formulation developed in Section \ref{sec:formulation} to the
numerical approximation of problem \eqref{strongp}. We shall analyze the stability of the
discrete variational formulation, and derive error estimates for the discrete approximations.
\subsection{Galerkin FEM}\label{ssec:FEM}
Based on the variational formulation \eqref{eqn:var2}, we can develop a new Galerkin FEM for problem
\eqref{strongp} with enhanced convergence rates. First, we approximate the regular part $\ur$ using a
Galerkin procedure over the continuous piecewise linear finite element space $V_h$. The choice of
piecewise linear elements is motivated by the following empirical observation: due to the presence of the
potential term $q$, the solution $u$ to the fractional model \eqref{strongp}
can contains a hierarchy of weak singularities, apart from the leading one $x^{\alpha-1}$, and thus
higher-order elements are not expected to be efficient in general.
The discrete counterpart of \eqref{eqn:var2} is to find $\ur_h\in V_h$ such that
\begin{equation}\label{eqn:fem}
    a_r(\ur_h,v)=(\tf,v) \quad \forall v \in V_h,
\end{equation}
where the bilinear form $a_r(\cdot,\cdot)$ and the source term $\tf\in L^2\II$ are defined in
\eqref{eqn:var2} and \eqref{eqn:Q+tf}, respectively. Then we reconstruct
a finite element approximation $\mu_h$ to the strength $\mu$ of the singular part $u^s$ by
\begin{equation}\label{eqn:lambdah}
  \mu_h=c_0 \left({_0\hspace{-0.3mm}I^\al_x} (f-q\ur_h)\right) (1),
\end{equation}
where the constant $c_0$  is defined in \eqref{eqn:c0}. Last, we construct an approximate
solution $u_h$ to \eqref{strongp} by
\begin{equation}\label{eqn:uh}
  u_h=\ur_h+\mu_h(x^{\al-1}-x^2).
\end{equation}
In order to derive an error estimate, we first establish the well-posedness of
problem \eqref{eqn:fem}. To this end, we need the (adjoint) Ritz projection $R_h:
V\rightarrow V_h$ defined by
\begin{equation}\label{eqn:Ritz}
    A(v,R_hu)=A(v,u) \quad \forall u\in V,  v\in V_h.
\end{equation}
Then C\'{e}a' lemma and finite element duality imply that there hold for any $\beta\in[1-\alpha/2,1/2)$
\begin{equation}\label{eqn:Ritzfrac}
  \begin{aligned}
    \|R_h u\|_{V} & \le C \| u\|_{V} &&  \forall u \in V,\\
    \| u- R_h u \|_{L^{2}\II} & \le C  h^{\al/2 -1 + \beta}  \| u \|_{V} && \forall u \in V.
  \end{aligned}
\end{equation}
Note that the $L^2\II$ error estimates of the adjoint Ritz projection $R_h$ is suboptimal,
due to the low global regularity of the adjoint solution.

Next, we show the stability of the discrete variational problem \eqref{eqn:fem}.
\begin{theorem}
Let Assumption \ref{ass:riem2} hold, $f\in L^2\II$, and $q\in L^\infty\II$. Then there is
an $h_0>0$ such that for all $h\le h_0$ the finite element problem: finding $\ur_h\in V_h$ such that
\begin{equation}\label{discp}
a_r(\ur_h,v)=(f,v)  \quad \forall v\in V_h
\end{equation}
has a unique solution.
\end{theorem}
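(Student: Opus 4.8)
The plan is a Schatz‑type duality argument: I will upgrade the continuous inf‑sup bound \eqref{inf-suprl} to a discrete one valid for all sufficiently small $h$, and then use that $V_h$ is finite dimensional, so that for the square linear system \eqref{discp} existence is equivalent to uniqueness. Thus it suffices to show that the only $\ur_h\in V_h$ with $a_r(\ur_h,v)=0$ for all $v\in V_h$ is $\ur_h\equiv0$; the right‑hand side $f$ plays no role in this reduction.

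So fix such a $\ur_h$, and fix once and for all a $\beta\in(1-\alpha/2,1/2)$, which is possible since $\alpha\in(1,2)$ forces $1-\alpha/2<1/2$, and which guarantees that the exponent $\alpha/2-1+\beta$ appearing in \eqref{eqn:Ritzfrac} is strictly positive. By the continuous inf‑sup estimate \eqref{inf-suprl} (valid here by Assumption \ref{ass:riem2}), there is $w\in V$ with $\|w\|_V\le1$ and $a_r(\ur_h,w)\ge\tfrac{\delta}{2}\|\ur_h\|_V$. The key step is to test instead with $w-R_hw$, where $R_h:V\to V_h$ is the adjoint Ritz projection \eqref{eqn:Ritz}. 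Since $R_hw\in V_h$ and $\ur_h$ solves the homogeneous discrete problem, $a_r(\ur_h,R_hw)=0$, so $a_r(\ur_h,w)=a_r(\ur_h,w-R_hw)$. Writing $a_r=A+b$ and using the defining relation $A(\ur_h,R_hw)=A(\ur_h,w)$ (legitimate because $\ur_h\in V_h$), the leading‑order term cancels, $A(\ur_h,w-R_hw)=0$, and therefore $a_r(\ur_h,w)=b(\ur_h,w-R_hw)$.

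It remains to estimate the $b$‑term, and here one crucially exploits that $b(\cdot,\cdot)$ is continuous on $L^2\II\times L^2\II$ — not merely on $V\times V$ — by \eqref{eqn:bconti}. Combining \eqref{eqn:bconti}, the $L^2\II$ bound for the adjoint Ritz error and the $V$‑stability of $R_h$ in \eqref{eqn:Ritzfrac}, and $\|\ur_h\|_{L^2\II}\le\|\ur_h\|_V$, one gets
\begin{equation*}
  |b(\ur_h,w-R_hw)|\le C\|\ur_h\|_{L^2\II}\,\|w-R_hw\|_{L^2\II}
   \le C h^{\alpha/2-1+\beta}\|\ur_h\|_V\,\|w\|_V\le C h^{\alpha/2-1+\beta}\|\ur_h\|_V .
\end{equation*}
Chaining this with the lower bound yields $\tfrac{\delta}{2}\|\ur_h\|_V\le C h^{\alpha/2-1+\beta}\|\ur_h\|_V$, and choosing $h_0$ with $C h_0^{\alpha/2-1+\beta}<\delta/2$ forces $\ur_h\equiv0$ for every $h\le h_0$, hence uniqueness and, by finite dimensionality, existence. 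The only point that needs care is the strict choice $\beta>1-\alpha/2$, so that a genuine positive power of $h$ is recovered; and, relatedly, that this recovery hinges entirely on $b$ being bounded on $L^2\II\times L^2\II$ rather than only on $V\times V$. The continuity bound \eqref{eqn:bconti} therefore carries the weight of the whole argument, as in Mikhlin's asymptotic‑stability theorem for perturbed coercive problems.
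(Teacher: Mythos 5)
Your proposal is correct and follows essentially the same route as the paper: a Schatz-type kick-back using the continuous inf-sup condition \eqref{inf-suprl}, the Galerkin orthogonality of the adjoint Ritz projection $R_h$, the $L^2\II\times L^2\II$ continuity of $b(\cdot,\cdot)$ from \eqref{eqn:bconti}, and the estimates \eqref{eqn:Ritzfrac}. The only (cosmetic) difference is packaging — you argue uniqueness of the homogeneous discrete problem and invoke finite dimensionality, whereas the paper derives the discrete inf-sup condition \eqref{discr-inf-sup} directly — and your insistence on $\beta>1-\alpha/2$ strictly, so that a genuine positive power of $h$ appears, is a point the paper leaves implicit.
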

\begin{proof}
The unique existence of the discrete solution $\ur_h\in V_h$ when $q\equiv0$ is a
direct consequence of the coercivity and continuity of the bilinear form $A(\cdot,\cdot)$
on the space $V\times V$, cf. \eqref{A-coercive} and \eqref{continuous}, and Lax-Milgram
theorem. For the case $q\neq0$, we show the inf-sup condition for the bilinear form
$a_r(\cdot,\cdot)$ on the space $V_h\times V_h$ using a kick-back argument analogous to
Schatz \cite{Schatz-1974}.

By the {{inf-sup}} condition \eqref{inf-suprl},
we have for $z_h\in V_h \subset V\equiv\Hd{\al/2}$
\begin{equation}\label{eqn:discstabest1}
   \delta \| z_h \|_{V} \le \sup_{v\in V} \frac{a_r(z_h,v)}{\|v\|_{V}}
   \le \sup_{v\in V} \frac{a_r(z_h,v-R_hv)}{\|v\|_{V}}
   +\sup_{v\in V} \frac{a_r(z_h,R_hv)}{\|v\|_{V}}
\end{equation}
In view of Galerkin orthogonality of (adjoint) Ritz-projection $R_h$
and the continuity of bilinear form $b(\cdot,\cdot)$ in \eqref{eqn:bconti}, we have
\begin{equation*}
\begin{split}
   \sup_{v\in V} \frac{a_r(z_h,v-R_hv)}{\|v\|_V}
   = \sup_{v\in V} \frac{b(z_h,v-R_hv)}{\| v\|_V}
   \le C \sup_{v\in V} \frac{\|z_h\|_{L^2\II}\|v-R_hv\|_{L^2\II}}{\|v\|_V}.
\end{split}
\end{equation*}
For the first term, using \eqref{eqn:Ritzfrac}, we obtain for all $\beta\in [1-\alpha/2,1/2)$
\begin{equation*}
   \sup_{v\in V} \frac{a_r(z_h,v-R_hv)}{\|v\|_V}
   \le Ch^{\al/2-1+\beta}\|z_h\|_{L^2\II}\le C_1h^{\al/2-1+\beta}\|z_h\|_V .
\end{equation*}
Now by the stability of $R_h$ in \eqref{eqn:Ritzfrac},
we deduce the following estimate for the second term
\begin{equation*}
   \sup_{v\in V} \frac{a_r(z_h,R_hv)}{\| v \|_V}
\le C\sup_{v\in V} \frac{a_r(z_h,R_hv)}{\| R_hv \|_V}
\le C_2\sup_{v\in V_h}\frac{a_r(z_h,v)}{\|v\|_V}.
\end{equation*}
By choosing $h_0$ such that $C_1 h_0^{\al/2-1+\beta} = \delta/2$,
we arrive at the desired discrete inf-sup condition on $V_h\times V_h$:
\begin{equation}\label{discr-inf-sup}
  \frac{\delta}{2} \|z_h\|_V \le C_2 \sup_{v \in V_h} \frac{a_r(z_h,v)}{ \|v\|_V}
  \quad \text{for} \quad h \le h_0.
\end{equation}
This shows that there is a unique solution of \eqref{eqn:fem}.
\end{proof}

Before providing error estimates, we note that the matrix analogue of the bilinear form $A(\cdot,\cdot)$ is
of lower-Hessenberg form, and that for the integral part is of rank one. Hence,
the matrix for the bilinear form $a_r(\cdot,\cdot)$ is a rank-one perturbation of
a lower-Hessenberg matrix. Further, the stiffness matrix for the leading term $-(\DDR0{\alpha/2}\cdot,\
\DDR1{\alpha/2}\cdot)$ is Toeplitz when the mesh is uniform, which can be verified directly. 
This represents a very important structural property that can be exploited in forming the
stiffness matrix, efficient storage and fast iterative solution of the resulting linear system, by e.g., GMRES.

\subsection{Error estimates}\label{ssec:error}
Next we establish error estimates for the Galerkin approximation $u_h$ given by \eqref{eqn:uh}. We first
analyze the $L^2\II$- and $\widetilde H^{\al/2}\II$-norm of the error $\ur-\ur_h$ for the regular part $\ur$.
\begin{theorem}\label{thm:regerror}
Let Assumption \ref{ass:riem2} hold, $f\in \Hd {\ga} $, and $ q \in \Hd{\ga} \cap L^\infty\II$, $0\leq\gamma\leq
1$, $\gamma\neq 1/2$. Then there is an $h_0>0$ such that for all $h\le h_0$, the solution $\ur_h$ to problem
\eqref{eqn:fem} satisfies for any $\beta\in[1-\alpha/2,1/2)$, with $\ell(\beta,\gamma)=:\min(\al-1+\beta,\ga)$
\begin{equation*}
   \|\ur-\ur_h\|_{L^2\II} + h^{\al/2-1+\beta}\|\ur-\ur_h\|_{\Hd{\alpha/2}}
   \le C h^{\min(2,\al+\ell(\beta,\gamma))-1+\beta} \|f\|_{\Hd\ga}.
\end{equation*}
\end{theorem}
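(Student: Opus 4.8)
The plan is to proceed by the standard duality (Aubin--Nitsche) argument adapted to the nonsymmetric, perturbed bilinear form $a_r(\cdot,\cdot)$, treating the $\widetilde H^{\al/2}\II$-estimate first via a C\'{e}a-type bound and then upgrading to $L^2\II$ through the adjoint problem of Section \ref{secc:dual}. First I would record Galerkin orthogonality: since $a_r(\ur,v)=(\tf,v)=a_r(\ur_h,v)$ for all $v\in V_h$, we have $a_r(\ur-\ur_h,v)=0$ on $V_h$. Combining this with the discrete inf-sup condition \eqref{discr-inf-sup} and the continuity \eqref{eqn:arcontinuous} of $a_r$, a routine manipulation (for any $\chi\in V_h$, write $\ur_h-\chi$ and bound $a_r(\ur_h-\chi,v)=a_r(\ur-\chi,v)$) yields the quasi-optimality $\|\ur-\ur_h\|_V\le C\inf_{\chi\in V_h}\|\ur-\chi\|_V$. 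Then Theorem \ref{thm:regrl-new} gives $\ur\in H^{\al+\ell(\beta,\gamma)}\II$, and since $\al+\ell(\beta,\gamma)$ may exceed $2$ we apply Lemma \ref{fem-interp-U} with $\gamma$ there replaced by $\min(2,\al+\ell(\beta,\gamma))$, obtaining
\begin{equation*}
  \|\ur-\ur_h\|_{\Hd{\al/2}}\le C h^{\min(2,\al+\ell(\beta,\gamma))-\al/2}\|\ur\|_{H^{\min(2,\al+\ell(\beta,\gamma))}\II}\le C h^{\min(2,\al+\ell(\beta,\gamma))-\al/2}\|f\|_{\Hd\ga}.
\end{equation*}
Multiplying by $h^{\al/2-1+\beta}$ recovers the second term in the claimed bound.

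For the $L^2\II$-estimate I would set $e=\ur-\ur_h$ and introduce the adjoint solution $w\in V$ of \eqref{eqn:dual} with right-hand side $\langle v,F\rangle=(v,e)$, so that $\|e\|_{L^2\II}^2=(e,e)=a_r(e,w)$. By Galerkin orthogonality this equals $a_r(e,w-R_h w)$ for the adjoint Ritz projection $R_h$, but because $a_r$ is a perturbation of $A$ it is cleaner to write $a_r(e,w-\chi)=A(e,w-\chi)+b(e,w-\chi)$ for arbitrary $\chi\in V_h$ and estimate the two pieces separately: the $A$-part is bounded by $C\|e\|_V\|w-\chi\|_V$ via \eqref{continuous}, and the $b$-part by $C\|e\|_{L^2\II}\|w-\chi\|_{L^2\II}$ via \eqref{eqn:bconti}. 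Choosing $\chi$ to be the best $V$-approximant of $w$ and invoking Theorem \ref{thm:dualreg} ($w\in\Hdi1{\al-1+\beta}$ with $\|w\|_{\Hdi1{\al-1+\beta}}\le C\|e\|_{L^2\II}$) together with Lemma \ref{fem-interp-U} gives $\|w-\chi\|_V\le C h^{\al-1+\beta-\al/2}\|e\|_{L^2\II}=C h^{\al/2-1+\beta}\|e\|_{L^2\II}$ and, by an analogous duality within the adjoint problem (or directly from \eqref{eqn:Ritzfrac}), $\|w-\chi\|_{L^2\II}\le C h^{\al/2-1+\beta}\|e\|_{L^2\II}$. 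Assembling these and absorbing one factor of $\|e\|_{L^2\II}$,
\begin{equation*}
  \|e\|_{L^2\II}\le C h^{\al/2-1+\beta}\|e\|_V + C h^{\al/2-1+\beta}\|e\|_{L^2\II},
\end{equation*}
and for $h\le h_0$ the last term is absorbed into the left side. Inserting the $V$-estimate from the first paragraph yields $\|e\|_{L^2\II}\le C h^{\min(2,\al+\ell(\beta,\gamma))-\al/2}h^{\al/2-1+\beta}\|f\|_{\Hd\ga}=C h^{\min(2,\al+\ell(\beta,\gamma))-1+\beta}\|f\|_{\Hd\ga}$, which is the first term of the asserted estimate.

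The main obstacle is the handling of the nonlocal, rank-one term $\,_0I_x^\al(q\ur)(1)\,(Q,v)$ inside $b(\cdot,\cdot)$ in the duality argument: one must verify that it contributes only an $L^2\II\times L^2\II$-bounded perturbation (so that it is genuinely lower order and can be absorbed), which is exactly the content of \eqref{eqn:bconti}, and that the adjoint problem it induces is still covered by Theorem \ref{thm:dualreg}. A secondary technical point is that the interpolation estimate of Lemma \ref{fem-interp-U} caps the exponent at $\gamma\le2$, so the genuine gain $\al+\ell(\beta,\gamma)$ in the regularity of $\ur$ must be truncated to $\min(2,\al+\ell(\beta,\gamma))$ when passing to the finite element space — this is the source of the $\min(2,\cdot)$ in the statement and must be tracked carefully through both estimates. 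Everything else is a routine application of C\'{e}a's lemma, the discrete inf-sup bound \eqref{discr-inf-sup}, and the regularity results already established.
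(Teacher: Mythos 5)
Your proposal is correct and follows essentially the same route as the paper: quasi-optimality in $\Hd{\al/2}$ via the discrete inf-sup condition \eqref{discr-inf-sup} together with Lemma \ref{fem-interp-U} and Theorem \ref{thm:regrl-new}, then Nitsche's trick with the adjoint regularity of Theorem \ref{thm:dualreg}. The only cosmetic difference is that in the duality step you split $a_r=A+b$ and absorb the lower-order term for small $h$, whereas the paper bounds $a_r(e,w-w_h)$ directly by the continuity \eqref{eqn:arcontinuous} on $V\times V$, which reaches the same estimate without the absorption step.
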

\begin{proof}
The error estimate in the $\Hd{\al/2}$-norm follows directly from C\'{e}a's lemma,
\eqref{discr-inf-sup} and the Galerkin orthogonality. Specifically, for all $h\le h_0$ and
any $ \chi \in V_h$ we have by Galerkin orthogonality, \eqref{eqn:arcontinuous} and \eqref{discr-inf-sup}
\begin{equation*}
\begin{split}
    \frac \delta 2\| \ur_h-\chi \|_{V}
    &\le C\sup_{v \in V_h} \frac{a_r(\ur_h-\chi,v)}{ \|v\|_V}\\
    &\le C\sup_{v \in V_h} \frac{a_r(\ur-\chi,v)}{ \|v\|_V}
    \le C\| \ur-\chi \|_{V}.
\end{split}
\end{equation*}
Hence the triangle inequality yields for all $\chi \in V_h$
\begin{equation*}
\begin{split}
    \| \ur-\ur_h \|_{V} \le \|  \ur-\chi \|_V + \|\chi-\ur_h \|_V
    \le C \| \ur-\chi \|_V.
\end{split}
\end{equation*}
Then the desired $\Hd{\alpha/2}$-estimate follows from Lemma \ref{fem-interp-U} by
\begin{equation*}
\begin{split}
    \| \ur-\ur_h \|_V &\le C\inf_{\chi \in V_h} \| \ur -\chi \|_V \\
    &\le C h^{\min(2,\al+\ell(\beta,\gamma))-\al/2}\| \ur \|_{\al+\ga(\beta)}\\
    &\le C h^{\min(2,\al+\ell(\beta,\gamma))-\al/2} \| f \|_{\Hd{\ga}}.
\end{split}
\end{equation*}
Then we apply Nitsche's trick to establish the $L^2\II$-error estimate.
To this end, we consider the adjoint problem \eqref{eqn:dual} with $f=\ur-\ur_h$, i.e.
\begin{equation*}
\begin{split}
    \| \ur-\ur_h \|_{L^2\II}^2 = a_r(\ur-\ur_h, w)= a_r(\ur-\ur_h,w-w_h),
\end{split}
\end{equation*}
for any $w_h\in V_h$. 
Then Lemma \ref{fem-interp-U}, Theorem \ref{thm:dualreg} and \eqref{eqn:arcontinuous} yield
for any $\beta\in[1-\alpha/2,1/2)$
\begin{equation*}
\begin{split}
    \| \ur-\ur_h \|_{L^2\II}^2 &\le \|\ur-\ur_h\|_{V}\inf_{w_h\in V_h}\|w-w_h\|_{V}\\
    &\le C h^{\min(2,\al+\ell(\beta,\gamma))-1+\beta} \| f \|_{\Hd{\ga}} \| \ur-\ur_h \|_{L^2\II}.
\end{split}
\end{equation*}
This completes the proof of the theorem.
\end{proof}

Now we turn to the reconstruction $\mu_h$ of the singularity strength $\mu$.
\begin{lemma}\label{lem:singerror}
Let the assumptions in Theorem \ref{thm:regerror} hold. Then there is an $h_0>0$ such
that for all $h\le h_0$, the solution $\mu_h$ satisfies that for any $\beta\in[1-\alpha/2,1/2)$, with
$\ell(\beta,\gamma)=:\min(\al-1+\beta,\ga)$
\begin{equation*}
   |\mu-\mu_h|\le C h^{\min(2,\al+\ell(\beta,\gamma))-1+\beta} \|f\|_{\Hd \ga}.
\end{equation*}
\end{lemma}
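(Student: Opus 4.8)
The plan is to use the reconstruction formulas \eqref{eqn:lambda} and \eqref{eqn:lambdah} directly, since they differ only through $\ur$ versus $\ur_h$ while the $f$-contribution is identical. First I would subtract the two formulas: by linearity of the fractional integral ${_0\hspace{-0.3mm}I^\al_x}$ and of point evaluation at $x=1$, the terms involving $f$ cancel and
\begin{equation*}
  \mu-\mu_h = c_0\left({_0\hspace{-0.3mm}I^\al_x}\big(q(\ur_h-\ur)\big)\right)(1),
\end{equation*}
where $c_0$ is the fixed constant of \eqref{eqn:c0}, independent of $h$ and $f$. Thus it suffices to control the linear functional $g\mapsto \big({_0\hspace{-0.3mm}I^\al_x}g\big)(1)$ on $L^2\II$ and then appeal to the $L^2\II$ error estimate for the regular part already proved in Theorem \ref{thm:regerror}.

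Next I would observe that, writing the fractional integral explicitly,
\begin{equation*}
  \big({_0\hspace{-0.3mm}I^\al_x}g\big)(1)=\frac1{\Gamma(\al)}\int_0^1(1-t)^{\al-1}g(t)\,dt,
\end{equation*}
and since $\al\in(1,2)$ the kernel $(1-t)^{\al-1}$ is bounded on $[0,1]$, so the Cauchy--Schwarz inequality gives $\big|\big({_0\hspace{-0.3mm}I^\al_x}g\big)(1)\big|\le C\|g\|_{L^2\II}$. (Equivalently one may invoke Theorem \ref{thm:fracop}(c) to get ${_0\hspace{-0.3mm}I^\al_x}g\in\Hdi0\al$, and then the Sobolev embedding $\Hdi0\al\hookrightarrow C(\overline D)$ since $\al>1/2$, bounding the point value by $\|{_0\hspace{-0.3mm}I^\al_x}g\|_{\Hdi0\al}\le C\|g\|_{L^2\II}$.) Applying this with $g=q(\ur_h-\ur)$ and using $q\in L^\infty\II$ yields $|\mu-\mu_h|\le C\|q\|_{L^\infty\II}\,\|\ur-\ur_h\|_{L^2\II}$.

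Finally I would insert the $L^2\II$ bound from Theorem \ref{thm:regerror}, valid for $h\le h_0$ under the present hypotheses, namely $\|\ur-\ur_h\|_{L^2\II}\le Ch^{\min(2,\al+\ell(\beta,\gamma))-1+\beta}\|f\|_{\Hd\ga}$, to obtain the claimed estimate. There is no real obstacle in this argument; the only point requiring (elementary) attention is the boundedness on $L^2\II$ of the evaluation functional $g\mapsto\big({_0\hspace{-0.3mm}I^\al_x}g\big)(1)$, and it is precisely this that makes the convergence rate for the recovered strength $\mu_h$ coincide with the faster $L^2\II$ rate for $\ur_h$ rather than with the $\Hd{\al/2}$ rate. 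In particular no separate duality argument is needed, the target rate being already the $L^2\II$ one.
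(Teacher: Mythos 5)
Your proposal is correct and follows essentially the same route as the paper's proof: subtract the two reconstruction formulas so the $f$-terms cancel, bound the evaluation functional $g\mapsto({_0\hspace{-0.3mm}I^\al_x}g)(1)$ on $L^2(D)$ (the paper does this via Theorem \ref{thm:fracop}(c) and the Sobolev embedding, exactly as in your parenthetical alternative), and then invoke the $L^2(D)$ estimate of Theorem \ref{thm:regerror}. The direct Cauchy--Schwarz argument with the bounded kernel $(1-t)^{\al-1}$ is a slightly more elementary way to phrase the same bound, but the substance is identical.
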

\begin{proof}
We first recall that $q(\ur-\ur_h)\in L^2\II$ since $q\in L^\infty\II$ and $\ur-\ur_h\in L^2\II$.
Thus by Sobolev imbedding theorem, we have
\begin{equation}
    |\mu-\mu_h|=|c_0|\big|{_0\hspace{-0.3mm}I^\al_x}(q(\ur-\ur_h))(1)\big|
    \le C\|{_0\hspace{-0.3mm}I^\al_x}(q(\ur-\ur_h))\|_{\Hdi0{\al}}.
\end{equation}
Then by Theorems \ref{thm:fracop}(c) and \ref{thm:regerror},
we have for any $\beta\in[1-\alpha/2,1/2)$
\begin{equation}
    |\mu-\mu_h| \le C \| \ur-\ur_h\|_{L^2\II} \le C h^{\min(2,\al+\ell(\beta,\gamma))-1+\beta} \|f\|_{\Hd \ga}.
\end{equation}
\end{proof}

Now we can derive a first error estimate for the approximation $u_h$ defined in \eqref{eqn:uh}.
\begin{theorem}\label{lem:femerror}
Let the assumptions in Theorem \ref{thm:regerror} hold. Then there is an $h_0>0$ such
that for all $h\le h_0$, the solution $u_h$ satisfies that for any $\beta\in[1-\alpha/2,1/2)$, with
$\ell(\beta,\gamma)=:\min(\al-1+\beta,\ga)$
\begin{equation*}
   \|u-u_h\|_{L^2\II} + h^{\al/2-1+\beta}\|u-u_h\|_{\Hd{\alpha/2}}
   \le C h^{\min(2,\al+\ell(\beta,\gamma))-1+\beta} \|f\|_{\Hd\ga}.
\end{equation*}
\end{theorem}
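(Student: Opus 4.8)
The plan is to exploit that the exact and discrete solutions share the same singular profile $u^s = x^{\al-1}-x^2$: by \eqref{eqn:ucompose} and \eqref{eqn:uh},
\begin{equation*}
  u - u_h = (\ur - \ur_h) + (\mu - \mu_h)\,u^s ,
\end{equation*}
so the triangle inequality in the $L^2\II$- and $\Hd{\al/2}$-norms reduces the claim to three ingredients already in hand: the regular-part error from Theorem \ref{thm:regerror}, the scalar error $|\mu-\mu_h|$ from Lemma \ref{lem:singerror}, and the fixed ($h$-independent) norms $\|u^s\|_{L^2\II}$ and $\|u^s\|_{\Hd{\al/2}}$.

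First I would check that $u^s = x^{\al-1}-x^2\in\Hd{\al/2}$. Indeed $u^s(0)=u^s(1)=0$; near $x=0$ the term $x^{\al-1}$ lies in $\Hdi0{\al-1+\beta}$ for every $\beta\in[1-\al/2,1/2)$, while the only other loss of smoothness is a corner at $x=1$, which costs regularity only up to (but below) $3/2$. Since $\al/2<\min(\al-1+\beta,3/2)$ for $\al\in(1,2)$, we get $u^s\in\Hd{\al/2}$, and in particular $\|u^s\|_{L^2\II}+\|u^s\|_{\Hd{\al/2}}\le C$; these are absolute constants absorbed into the generic $C$.

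Next I would combine the available estimates. Theorem \ref{thm:regerror} gives both $\|\ur-\ur_h\|_{L^2\II}\le C h^{\min(2,\al+\ell(\beta,\ga))-1+\beta}\|f\|_{\Hd\ga}$ and, on dividing the same bound by $h^{\al/2-1+\beta}$, $\|\ur-\ur_h\|_{\Hd{\al/2}}\le C h^{\min(2,\al+\ell(\beta,\ga))-\al/2}\|f\|_{\Hd\ga}$; Lemma \ref{lem:singerror} gives $|\mu-\mu_h|\le C h^{\min(2,\al+\ell(\beta,\ga))-1+\beta}\|f\|_{\Hd\ga}$, i.e. precisely the $L^2\II$-rate. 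Hence the $L^2\II$ part of the error bound is immediate. For the $\Hd{\al/2}$ part, multiply through by $h^{\al/2-1+\beta}$: the regular-part contribution is exactly $C h^{\min(2,\al+\ell(\beta,\ga))-1+\beta}\|f\|_{\Hd\ga}$, while the singular-part contribution is $h^{\al/2-1+\beta}|\mu-\mu_h|\,\|u^s\|_{\Hd{\al/2}}\le C h^{\al/2-1+\beta}h^{\min(2,\al+\ell(\beta,\ga))-1+\beta}\|f\|_{\Hd\ga}$, which is bounded by the claimed rate because $\al/2-1+\beta\ge 0$ and $h\le h_0\le 1$. Adding the pieces yields the stated estimate, valid for the same $h_0$ as in Theorem \ref{thm:regerror}.

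There is no genuine obstacle here: once Theorem \ref{thm:regerror} and Lemma \ref{lem:singerror} are in place, the proof is bookkeeping of exponents together with the elementary fact $u^s\in\Hd{\al/2}$. The only point requiring a second's care is the extra factor $h^{\al/2-1+\beta}$ in front of the singular term in the $\Hd{\al/2}$-estimate; this never hurts, since that exponent is nonnegative precisely because $\beta\ge 1-\al/2$.
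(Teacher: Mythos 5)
Your proposal is correct and follows the same route as the paper: decompose $u-u_h=(\ur-\ur_h)+(\mu-\mu_h)(x^{\al-1}-x^2)$, apply the triangle inequality, and invoke Theorem \ref{thm:regerror} and Lemma \ref{lem:singerror}. Your extra checks---that $x^{\al-1}-x^2\in\Hd{\al/2}$ with an $h$-independent norm, and that the factor $h^{\al/2-1+\beta}\le 1$ harmlessly multiplies the singular contribution---are exactly the bookkeeping the paper leaves implicit.
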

\begin{proof}
The definitions of $u$ and $u_h$ imply
\begin{equation*}
  \|u-u_h\|_{\Hd{\alpha/2}}\le \|\ur-\ur_h\|_{\Hd{\alpha/2}}+|\mu-\mu_h|\|x^{\al-1}-x^2\|_{\Hd{\alpha/2}}.
\end{equation*}
Then the desired estimate follows from Theorem \ref{thm:regerror} and Lemma
\ref{lem:singerror}. The proof of the $L^2\II$-estimate is analogously and hence omitted.
\end{proof}

\begin{remark}
By Theorem \ref{lem:femerror}, for $q\in L^\infty\II$ and $f\in L^2\II$, the $L^2\II$- and $\Hd{\al/2}$-norm of
the error can be respectively almost of order $ O(h^{\al-1/2})$ and $O(h^{\al/2})$, even though the solution $u$
is only in $\Hdi 0 {\al-1+\beta}$ for $\beta\in[1-\alpha/2,1/2)$, which is better than that for the standard Galerkin
method by $O(h^{\alpha/2})$ for both the $L^2\II$ and $\Hd {\alpha/2}$ error estimates, cf. Theorem \ref{thm:femrl}.
Hence, in comparison with the standard Galerkin FEM, the new approach does yield a higher-order convergence rate.
\end{remark}

\begin{remark}
In contrast to the standard Galerkin FEM, the convergence rate of the singularity reconstruction
technique can be further picked up, if $q$ and $f$ are smoother. Specifically, for sufficiently smooth
$q$ and $f$, the $L^2\II$ and $\Hd{\al/2}$-estimates are respectively almost $ O(h^{\min(3/2,2\al-1)})$
and $O(h^{\min(2-\al/2,3\al/2-1/2)})$, even though the regularity of the solution $u$ remains only in
$\Hdi0{\al-1+\beta}\cap \Hd {\al/2}$. The $L^2\II$ estimate is not sharp, due to the low global regularity
of the adjoint problem. However, we note that the adjoint problem has its leading singularity concentrated only at
one point, i.e., $x=1$. This structure is not exploited in Nitsche's argument. It is still unclear how
to incorporate the structure.  One possibility is first to develop local error estimates.
\end{remark}

Last we shall derive an $L^\infty\II$-error estimate for the approximation $u_h$. To
this end, we consider Green's function for problem
\eqref{eqn:dualstrong}, i.e., to find $G(x,y)$ such that for any fixed $x\in D$
\begin{equation}\label{eqn:newgreen}
        -{\DDR1 \al} G(x,y) + q(y) G(x,y) +
    \frac{(1-y)^{\al-1}q(y)}{\Ga(\al)} (Q,G(x,\cdot)) = \delta_x(y) , \quad y \in D,
\end{equation}
with the boundary condition $G(x,0)=G(x,1)=0$. We note that the
variational formulation of Green's function is given by:  find
$G(x,y)\in V\equiv\Hd{\al/2}$ such that for all $x \in D$
\begin{equation}\label{eqn:newgreenweak}
     a_r(v,G(x,\cdot))=\langle v,\delta_x \rangle=v(x) \quad \forall  v\in V.
\end{equation}
The existence and uniqueness of Green's function $G(x,y) \in V$ follows directly
from Theorem \ref{thm:dualreg} and the fact that $\delta_x\in V^*$. Further, when
 $q\equiv0$ the problem reduces to the standard fractional boundary value
problem, for which the Green's function has the explicit representation \eqref{eqn:greenq0}
and $G(x,\cdot)\in \Hdi1{\al-1+\beta}$ with $\beta \in [1-\alpha/2,1/2)$. In case of a general
$q\neq 0$, the Green's function $G(x,y)$ to problem \eqref{eqn:newgreen} satisfies
\begin{equation*}
-\DDR1 \al G(x,y) = \delta_x(y) - q(y) G(x,y)
    - \frac{(1-y)^{\al-1}q(y)}{\Ga(\al)} (Q,G(x,\cdot)) .
\end{equation*}
Hence, for $q\in L^\infty\II$, $q(\cdot)G(x,\cdot)\in L^2\II$ and ${_yI_1^{\al}}\delta_x \in \Hdi1{\al-1+\beta}$,
and thus Green's function $G(x,\cdot)$ belongs to $\Hdi1{\al-1+\beta}$ with $\beta\in[1-\alpha/2,1/2)$.

Now we can state an error estimate in the $L^\infty\II$-norm for the approximation $u_h$.
\begin{theorem}\label{thm:femerrorinf}
Let the assumptions in Theorem \ref{thm:regerror} hold. Then there is an $h_0>0$ such
that for all $h\le h_0$, the solution $u_h$ satisfies that for any $\beta\in[1-\alpha/2,1/2)$,
with $\ell(\beta,\gamma)=:\min(\al-1+\beta,\ga)$, there holds
\begin{equation*}
   \|u-u_h\|_{L^\infty \II} \le C h^{\min(2,\al+\ell(\beta,\gamma))-1+\beta} \|f\|_{\Hd \ga}.
\end{equation*}
\end{theorem}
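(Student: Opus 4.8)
The plan is to follow the duality (Nitsche) argument of Theorem~\ref{thm:linf}, now carried out for the regular part $\ur$ together with the adjoint Green's function $G(x,\cdot)$ of \eqref{eqn:newgreenweak}, and then to add the easy contribution of the reconstructed singular part. First I would use the decomposition \eqref{eqn:uh} and the triangle inequality,
\begin{equation*}
  \|u-u_h\|_{L^\infty\II}\le \|\ur-\ur_h\|_{L^\infty\II}+|\mu-\mu_h|\,\|x^{\al-1}-x^2\|_{L^\infty\II}.
\end{equation*}
Since $\al>1$, the function $x^{\al-1}-x^2$ is bounded on $\overline D$, so the second term is at most $C\,|\mu-\mu_h|$, which is already controlled by Lemma~\ref{lem:singerror}. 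Thus everything reduces to an $L^\infty\II$ estimate for $\ur-\ur_h$; note that $\ur\in H^{\al+\ell(\beta,\gamma)}\II\hookrightarrow C(\overline D)$ by Theorem~\ref{thm:regrl-new}, so pointwise values make sense.

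For fixed $x\in D$, taking $v=\ur-\ur_h$ in \eqref{eqn:newgreenweak} gives $(\ur-\ur_h)(x)=a_r(\ur-\ur_h,G(x,\cdot))$, and the Galerkin orthogonality $a_r(\ur-\ur_h,w_h)=0$ for all $w_h\in V_h$ (from \eqref{eqn:var2} and \eqref{eqn:fem}) yields $(\ur-\ur_h)(x)=a_r(\ur-\ur_h,G(x,\cdot)-w_h)$ for every $w_h\in V_h$. Applying the continuity \eqref{eqn:arcontinuous}, the interpolation estimate of Lemma~\ref{fem-interp-U} (applicable since $\al/2\le\al-1+\beta<2$), and the regularity $G(x,\cdot)\in\Hdi1{\al-1+\beta}\cap\Hd{\al/2}$ recorded before the theorem, I obtain
\begin{equation*}
  |(\ur-\ur_h)(x)|\le C\|\ur-\ur_h\|_V\inf_{w_h\in V_h}\|G(x,\cdot)-w_h\|_V\le C h^{\al/2-1+\beta}\|\ur-\ur_h\|_V\,\|G(x,\cdot)\|_{\Hdi1{\al-1+\beta}}.
\end{equation*}
By Theorem~\ref{thm:regerror}, $\|\ur-\ur_h\|_V\le C h^{\min(2,\al+\ell(\beta,\gamma))-\al/2}\|f\|_{\Hd\ga}$, so multiplying the two powers of $h$ produces exactly the exponent $\min(2,\al+\ell(\beta,\gamma))-1+\beta$, which is the claimed rate — provided $\|G(x,\cdot)\|_{\Hdi1{\al-1+\beta}}$ is bounded uniformly in $x$.

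That uniform bound is the only genuine obstacle, and I expect to obtain it exactly as in the discussion preceding the theorem. Writing $-\DDR1\al G(x,\cdot)=\delta_x-q\,G(x,\cdot)-\frac{(1-\cdot)^{\al-1}q}{\Ga(\al)}(Q,G(x,\cdot))$ and using a representation of the form \eqref{eqn:dualsol} (the right-sided fractional integral of this right-hand side plus a multiple of $(1-\cdot)^{\al-1}$), I would bound: the $\Hdi1{\al-1+\beta}$-norm of $({_yI_1^{\al}}\delta_x)(y)$ uniformly in $x$ by \eqref{eqn:intdelta} (it is the norm of $(x-y)^{\al-1}/\Ga(\al)$ restricted to $(0,x)$, which stays bounded for $x\in(0,1)$); $\|q\,G(x,\cdot)\|_{L^2\II}\le C\|G(x,\cdot)\|_V\le C\|\delta_x\|_{V^*}\le C$ uniformly, using the Sobolev embedding $\Hd{\al/2}\hookrightarrow C(\overline D)$ and the inf-sup bound \eqref{inf-suprl}; and the rank-one term similarly, since $|(Q,G(x,\cdot))|\le C\|G(x,\cdot)\|_{L^2\II}\le C$. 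Feeding these into Theorem~\ref{thm:fracop}(c) gives $\|G(x,\cdot)\|_{\Hdi1{\al-1+\beta}}\le C$ with $C$ independent of $x$, which closes the estimate for $\ur-\ur_h$; collecting the two terms then completes the proof.
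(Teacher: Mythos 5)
Your proposal is correct and follows essentially the same route as the paper: the triangle inequality over the splitting \eqref{eqn:uh}, the duality identity $(\ur-\ur_h)(x)=a_r(\ur-\ur_h,G(x,\cdot)-w_h)$ via \eqref{eqn:newgreenweak} and Galerkin orthogonality, the continuity and interpolation bounds, and Lemma~\ref{lem:singerror} for the singular part. Your explicit verification that $\|G(x,\cdot)\|_{\Hdi1{\al-1+\beta}}$ is bounded uniformly in $x$ is a detail the paper leaves implicit in its discussion preceding the theorem, but it does not change the argument.
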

\begin{proof}
Like before, we first derive an $L^\infty\II$-estimate of $\ur-\ur_h$.
By the weak formulation of Green's function \eqref{eqn:newgreenweak}
and Galerkin orthogonality, we have for all $x\in D$ and $w_h\in V_h$
\begin{equation*}
\begin{split}
(\ur-\ur_h)(x)&= a_r(\ur-\ur_h, G(x,\cdot))=a_r(\ur-\ur_h, G(x,\cdot)-w_h).
\end{split}
\end{equation*}
Then by \eqref{continuous}, Lemma \ref{fem-interp-U} and Theorem \ref{thm:regrl-new} we obtain for any $\beta\in[1-\alpha/2,1/2)$
\begin{equation*}
\begin{split}
|(\ur-\ur_h)(x)|&\le \| \ur-\ur_h\|_V \inf_{w_h\in V_h}\|G(x,\cdot)-w_h\|_V \\
  &\le Ch^{\min(2,\al+\ell(\beta,\gamma))-1+\beta}\| \ur \|_{\Hdi0{\al+\ell(\beta,\gamma)}}\\
  &\le Ch^{\min(2,\al+\ell(\beta,\gamma))-1+\beta}\| f \|_{\Hd\ga}.
\end{split}
\end{equation*}
Then Lemma \ref{lem:singerror} yields
\begin{equation*}
\begin{split}
\|u-u_h\|_{L^\infty\II} &\le \| \ur-\ur_h \|_{L^\infty\II}+|\mu-\mu_h|\| x^{\al-1}-x^2 \|_{L^\infty\II}\\
    &\le Ch^{\min(2,\al+\ell(\beta,\gamma))-1+\beta}\| f \|_{\Hd\ga}.
\end{split}
\end{equation*}
This completes the proof of the theorem.
\end{proof}

\begin{remark}
In view of Theorems \ref{thm:linf} and \ref{thm:femerrorinf}, for $f\in L^2\II$, the $L^\infty\II$-norm of the
error is one half order higher for the new strategy than the standard Galerkin FEM. If the source term $f$ is smoother (in the space $\Hdi 0 s$),
the new technique yields an even higher uniform convergence rate due to the enhance regularity of the regular
part $\ur$. Further, numerically, a superconvergence phenomenon in the $L^\infty\II$-norm is observed for the
case $\al+\ell(\beta,\gamma)>2$.
\end{remark}

\section{Extension to mixed boundary value problem}\label{sec:neumann}
The reconstruction technique is very versatile, and it can be straightforwardly
extended to other type boundary conditions. We illustrate it with the following boundary
value problem with a mixed boundary condition: find $u\in L^2\II$ such that
\begin{equation}\label{strongp1}
  \begin{aligned}
   -{\DDR 0 \alpha} u + qu = f & \quad  \mbox{ in } D\\
   \DDR0 {\al-1} u(0)=u(1)=0, &
   \end{aligned}
\end{equation}
with $\alpha\in(3/2,2)$. The choice $\al\in(3/2,2)$ is to ensure that problem \eqref{strongp1} has a solution $u$ in
$L^2\II$; see the discussions below. To the best of our knowledge, the case of a mixed boundary condition
like problem \eqref{strongp1} has not been analyzed in the literature.

\subsection{Well-posedness}
We first discuss the well-posedness of problem \eqref{strongp1} and the regularity pickup. Let
$g={_0\hspace{-0.3mm}I^\al_x} f$. By \cite[Section 3]{JinLazarovPasciak:2013a} (also \cite[pp. 44,
Theorem 2.4]{SamkoKilbasMarichev:1993}), we have $\DDR 0 \alpha g = f$. This together with the
identities $(\DDR 0 {\alpha-1} g) (0) =0$ and $\DDR 0 {\alpha -1} x^{\alpha-2} = 0$ implies
that in case of $q\equiv0$, problem \eqref{strongp1} has a solution
\begin{equation}\label{eqn:sol1}
  u(x) =- ({_0\hspace{-0.3mm}I^\al_x} f)(x) +({_0\hspace{-0.3mm}I^\al_x} f)(1) x^{\al-2},
\end{equation}
which belongs to $\Hdi0{\al-3/2-\ep}\subset L^2\II$, $\epsilon\in (0,\alpha-3/2)$. To analyze the 
well-posedness of the problem, we introduce the following
function spaces
\begin{equation}\label{eqn:spaceU}
  U=\left(\Hdi0{\al-1}\oplus\{ x^{\al-2}\}\right)\cap\{u(1)=0\}
  \quad \mbox{and}\quad V=\Hdi1 1.
\end{equation}
Clearly, any element $u\in U$ can be uniquely represented by $u=u_0-u_0(1)x^{\al-2}$ for some $u_0 \in\Hdi0{\al-1}  $.
The next lemma shows that the functional
\begin{equation}\label{eqn:norm-U}
 \| u  \|_U = \|  \DDR0 {\al-1}u_0  \|_{L^2\II}
\end{equation}
defines a norm in the space $U$.

\begin{lemma}\label{lem:normU}
For $u\in U$, define the functional by \eqref{eqn:norm-U}.
Then $U$ is a Banach space with the norm $\| \cdot \|_U$.
\end{lemma}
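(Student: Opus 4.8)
Ever since we defined the functional $\|\cdot\|_U$ in \eqref{eqn:norm-U} via the representation $u = u_0 - u_0(1)x^{\al-2}$ with $u_0 \in \Hdi0{\al-1}$, two things need to be checked: that $\|\cdot\|_U$ really is a norm (in particular, positive-definiteness and the triangle inequality), and that $U$ is complete with respect to it. My plan is to first settle a preliminary point on which everything rests: the representation $u = u_0 - u_0(1)x^{\al-2}$ is genuinely unique, so that $\|u\|_U$ is well-defined. This follows because $x^{\al-2} \notin \Hdi0{\al-1}$ (since $\al-2 < \al-1-1/2$ fails — more precisely $x^{\al-2} \in H^s$ only for $s < \al - 3/2$, whereas $\al - 1 > \al - 3/2$), so the direct sum $\Hdi0{\al-1}\oplus\{x^{\al-2}\}$ is a true algebraic direct sum and the component $u_0$ is uniquely determined by $u$.

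Next I would verify the norm axioms. Homogeneity and the triangle inequality are inherited directly from the $L^2\II$-norm, since $u \mapsto u_0$ is linear (by uniqueness of the decomposition) and $u_0 \mapsto \DDR0{\al-1}u_0$ is linear. For positive-definiteness: if $\|u\|_U = 0$ then $\DDR0{\al-1}u_0 = 0$ in $L^2\II$; since $u_0 \in \Hdi0{\al-1}$, applying ${_0I_x^{\al-1}}$ and using that $\DDR0{\al-1}$ is a left inverse of ${_0I_x^{\al-1}}$ on this space (Theorem \ref{thm:fracop}(a), together with $u_0$ being zero at $0$ in the appropriate trace sense) gives $u_0 \equiv 0$, hence $u = u_0 - u_0(1)x^{\al-2} = 0$.

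For completeness, let $\{u^{(n)}\}$ be Cauchy in $\|\cdot\|_U$; then $\{\DDR0{\al-1}u_0^{(n)}\}$ is Cauchy in $L^2\II$, so it converges to some $w \in L^2\II$. Set $u_0 := {_0I_x^{\al-1}}w$. By Theorem \ref{thm:fracop}(c), ${_0I_x^{\al-1}}$ maps $L^2\II = \Hdi0{0}$ boundedly into $\Hdi0{\al-1}$, so $u_0 \in \Hdi0{\al-1}$, and $\DDR0{\al-1}u_0 = w$ by the semigroup/left-inverse property. Thus $u_0^{(n)} \to u_0$ in $\Hdi0{\al-1}$; since $\al - 1 > 1/2$, point evaluation at $x=1$ is continuous on $\Hdi0{\al-1}$, so $u_0^{(n)}(1) \to u_0(1)$, and therefore $u^{(n)} = u_0^{(n)} - u_0^{(n)}(1)x^{\al-2} \to u_0 - u_0(1)x^{\al-2} =: u$ in a sense that makes $\|u^{(n)} - u\|_U = \|\DDR0{\al-1}(u_0^{(n)} - u_0)\|_{L^2\II} \to 0$. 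One must also check the side constraint $u(1) = 0$ is preserved in the limit, which is immediate since $u^{(n)}(1) = 0$ for all $n$ and evaluation at $1$ is continuous. Hence $u \in U$ and $U$ is complete.

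The main obstacle is the bookkeeping around the boundary behavior at $x = 0$: one needs that $\DDR0{\al-1}$ is actually injective (equivalently, a genuine left inverse of ${_0I_x^{\al-1}}$) on $\Hdi0{\al-1}$, which is where the ``extension by zero lies in $H^{\al-1}(\RR)$'' definition of $\Hdi0{\al-1}$ does its work — it forces the vanishing initial condition $({_0I_x^{2-\al}}u_0)(0^+) = 0$ needed for the Riemann–Liouville derivative to have no spurious kernel. Once that injectivity is in hand (it is essentially Theorem \ref{thm:fracop}(a)–(c) applied on $\Hdi0{\al-1}$), the rest is a routine transport of the $L^2\II$ Banach-space structure through the isomorphism $u \mapsto \DDR0{\al-1}u_0$.
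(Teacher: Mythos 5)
Your proposal is correct and follows essentially the same route as the paper: both arguments reduce everything to the fact that $\DDR0{\al-1}$ and ${_0\hspace{-0.3mm}I_x^{\al-1}}$ are mutually inverse between $\Hdi0{\al-1}$ and $L^2\II$ (Theorem \ref{thm:fracop}(b),(c)), so that $\|\DDR0{\al-1}\cdot\|_{L^2\II}$ is equivalent to $\|\cdot\|_{\Hdi0{\al-1}}$, with completeness then transported from $\Hdi0{\al-1}$ and point evaluation at $x=1$ handled by Sobolev embedding. You are somewhat more explicit than the paper on two points it leaves implicit --- the uniqueness of the decomposition $u=u_0-u_0(1)x^{\al-2}$ (needed for $\|\cdot\|_U$ to be well defined) and the Cauchy-sequence details of completeness --- but the underlying argument is the same.
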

\begin{proof}
The triangle inequality and absolute homogeneity follow immediately. It suffices to show that $\| u \|_U$ implies $u=0$ for all
$u\in U$. By Theorem \ref{thm:fracop}(b), we have
\begin{equation*}
 \| \DDR0{\al-1}\fy \|_{L^2\II} \le C \| \fy  \|_{\Hdi0{\al-1}}, \quad \forall \fy \in \Hdi0{\al-1}.
\end{equation*}
Meanwhile, for $\fy \in \Hdi 0 {\alpha-1}$, by setting $ v= {\DDR0{\al-1}\fy}$, we deduce
$\fy={_0\hspace{-0.3mm}I^{\al-1}_x}v(x)$, and by Theorem \ref{thm:fracop}(c), there holds
\begin{equation*}
 \| \fy  \|_{\Hdi0{\al-1}} = \| {_0\hspace{-0.3mm}I^{\al-1}_x}v  \|_{\Hdi0{\al-1}}\le C\| v \|_{L^2\II}= C\| \DDR0{\al-1}\fy \|_{L^2\II}.
\end{equation*}
Thus the seminorm $\| \DDR0{\al-1} \cdot \|_{L^2\II}$ is equivalent to the norm $\| \cdot \|_{\Hdi0{\al-1}}$ in
the space $\Hdi0{\al-1}$. Then we deduce that
\begin{equation*}
\| u \|_U = \|  \DDR0{\al-1} u_0 \|_{L^2\II}=0 \Rightarrow  \| u_0 \|_{\Hdi0{\al-1}}=0  \Rightarrow u_0 =0 \Rightarrow u=0.
\end{equation*}
Further, the completeness of the space follows from the completeness of $\Hdi0{\al-1}$ and Sobolev imbedding theorem.
\end{proof}

Next we establish a stable variational formulation and derive the regularity pickup. We
define a bilinear form $A(\cdot,\cdot):U\times V \rightarrow \mathbb{R}$ by
\begin{equation*}
A(u,\fy)=(\DDR0 {\al-1}u,~~\fy').
\end{equation*}
It can be verified directly that the representation \eqref{eqn:sol1} satisfies the following Petrov-Galerkin formulation
\begin{equation}\label{eqn:weakform1}
 A(u,v)= (f,v)\quad \forall v\in V.
\end{equation}
Now we show the inf-sup condition of the bilinear form $A(\cdot,\cdot)$. For any fixed $u\in U$,
by choosing $v_u={_0\hspace{-0.3mm}I^{2-\al}_x}u - ({_0\hspace{-0.3mm}I^{2-\al}_x}u)(1)\in \Hdi 1 1 $
we obtain
\begin{equation*}
 \sup_{v \in V}  \frac{A(u,v)}{\| v' \|_{L^2\II}} \ge \frac{A(u,v_u)}{\| v_u' \|_{L^2\II}} = \| u \|_U.
\end{equation*}
Now for any nonzero $v \in V$, by choosing $u_v={_0\hspace{-0.3mm}I^{\al-1}_x}v' - ({_0\hspace{-0.3mm}I^{\al-1}_x}v')(1)x^{\al-2}\in U$
we have
\begin{equation*}
 A(u_v,v) = \| v'  \|_{L^2\II} > 0,
\end{equation*}
which implies that the inf-sup condition of the adjoint problem holds as well. Consequently, problem \eqref{eqn:weakform1}
with $q\equiv0$ has a unique solution in the space $U$, and it can be represented by \eqref{eqn:sol1}.

In the general case $q\neq 0$, we define
\begin{equation*}
a(u,v) = A(u,v) +(qu,v).
\end{equation*}
Then the variational formulation for problem \eqref{strongp1} is given
by: find $u\in U$ such that
\begin{equation}\label{varr-nm}
    a(u,v)=(f,v)\quad \forall v\in V.
\end{equation}
To study the bilinear form $a(\cdot,\cdot)$, we make the following assumption analogous to Assumption \ref{ass:riem}.
\begin{assumption} \label{ass:riem-nm}
Let the bilinear form $a(u,v)$ with $u\in U$ and $v\in V$ satisfy
\begin{itemize}
 \item[{$\mathrm{(a)}$}]  The problem of finding $u \in U$ such that $a(u,v)=0$ for all $v \in V$
           has only the trivial solution $u\equiv 0$.
 \item[{$(\mathrm{a}^\ast)$}] The problem of finding $v \in V$ such that $a(u,v)=0$ for all $u \in U$
    has only the trivial solution $v\equiv 0$.
\end{itemize}
\end{assumption}

Under Assumption \ref{ass:riem-nm}, we have the following theorem.
\begin{theorem}\label{thm:reg-nm}
Let $\al\in(3/2,2)$, $q\in \Hd \gamma$, $f\in \Hdi 0 \gamma$, $0\leq \gamma\leq 1$, $\gamma\neq1/2$, and Assumption
\ref{ass:riem-nm} hold. Then there exists a unique solution $u=u_0-u_0(1)x^{\al-2}\in U$ with $u_0 \in \Hdi0{\al+\beta}$
to problem \eqref{varr-nm} and further, for any $\beta\in[2-\alpha,1/2)$, with $\ell_n(\beta,\gamma)=:\min(\al-2+\beta,\ga)$,
there holds
\begin{equation*}
 \| u_0 \|_{\Hdi0{\al+\ell_n(\beta,\gamma)}} \le C \|  f \|_{\Hdi0{\gamma}}.
\end{equation*}
\end{theorem}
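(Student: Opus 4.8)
The plan is to mirror the treatment of the Dirichlet case (Theorems~\ref{thm:wpreg} and~\ref{thm:regrl-new}): first establish unique solvability of \eqref{varr-nm} in $U$ by a compact perturbation of the inf-sup condition already proved for $A(\cdot,\cdot)$ on $U\times V$ in the case $q\equiv0$, and then read off the regularity by bootstrapping the explicit representation. Concretely, I would set $\langle S_0u,v\rangle=A(u,v)$, $\langle Ku,v\rangle=(qu,v)$ and $S=S_0+K$, so that $S_0\in\mathcal L(U;V^*)$ is an isomorphism by the two inf-sup bounds established just above. Since $\Hdi0{\al-1}$ embeds compactly into $L^2\II$ and $\{x^{\al-2}\}$ is one-dimensional (and lies in $L^2\II$ precisely because $\al>3/2$), the embedding $U\hookrightarrow L^2\II$ is compact; hence (using $q\in L^\infty\II$, as in the Dirichlet case) $u\mapsto qu$ is compact $U\to L^2\II\hookrightarrow V^*$ and $K$ is compact. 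Thus $S$ is Fredholm of index zero, Assumption~\ref{ass:riem-nm}(a) forces injectivity and therefore bijectivity, i.e. an inf-sup estimate $\delta\|u\|_U\le\sup_{v\in V}a(u,v)/\|v\|_V$, and combined with Assumption~\ref{ass:riem-nm}(a$^\ast$) for the transposed problem, the Petree--Tartar argument of the proof of Theorem~\ref{thm:wpreg} yields a unique $u\in U$ with $\|u\|_U\le C\|f\|_{L^2\II}$; in particular $u\in\Hdi0{\al-3/2-\ep}\subset L^2\II$.

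Next, testing \eqref{varr-nm} against $v\in V$ shows $A(u,v)=(f-qu,v)$ for all $v\in V$, so $u$ solves the Petrov--Galerkin problem \eqref{eqn:weakform1} with source $f-qu\in L^2\II$; by uniqueness in the case $q\equiv0$ together with the representation \eqref{eqn:sol1}, $u=-\,{_0I_x^{\al}}(f-qu)+({_0I_x^{\al}}(f-qu))(1)\,x^{\al-2}$, hence $u_0=-\,{_0I_x^{\al}}(f-qu)$ in the decomposition $u=u_0-u_0(1)x^{\al-2}$. From here I would bootstrap: $f-qu\in L^2\II$ and Theorem~\ref{thm:fracop}(c) give $u_0\in\Hdi0\al$, so $u_0\in C^1(\overline D)$ with $u_0(0)=0$ (here $\al>3/2$ is used), while the term $x^{\al-2}$ keeps $u$ only in $\Hdi0{\al-2+\beta}$ for $\beta\in[2-\alpha,1/2)$. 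Writing $qu=qu_0-u_0(1)\,qx^{\al-2}$, Lemma~\ref{lem:Hmulti} (valid since $u_0,q\in L^\infty\II$) gives $qu_0\in\Hdi0{\min(\ga,\al-1)}\subseteq\Hdi0{\ell_n(\beta,\gamma)}$, and once one also knows $qx^{\al-2}\in\Hdi0{\ell_n(\beta,\gamma)}$ it follows (with $f\in\Hdi0\ga\subseteq\Hdi0{\ell_n(\beta,\gamma)}$) that $f-qu\in\Hdi0{\ell_n(\beta,\gamma)}$, and a second application of Theorem~\ref{thm:fracop}(c) upgrades $u_0=-\,{_0I_x^{\al}}(f-qu)$ to $\Hdi0{\al+\ell_n(\beta,\gamma)}$. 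The estimate then closes because $\|u_0\|_{\Hdi0{\al-1}}+|u_0(1)|\le C\|u_0\|_{\Hdi0\al}\le C\|f-qu\|_{L^2\II}\le C\|f\|_{L^2\II}$ by the well-posedness bound and $\|qx^{\al-2}\|_{\Hdi0{\ell_n(\beta,\gamma)}}\le C\|q\|_{\Hd\ga}$, so that $\|u_0\|_{\Hdi0{\al+\ell_n(\beta,\gamma)}}\le C\|f\|_{\Hdi0\ga}$.

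The one step that is not a routine transcription, and the main obstacle, is the weighted multiplication estimate $qx^{\al-2}\in\Hdi0{\ell_n(\beta,\gamma)}$: since $x^{\al-2}\notin L^\infty\II$, Lemma~\ref{lem:Hmulti} is unavailable, and one must argue directly, e.g. by splitting $(0,1)$ into a neighbourhood of $0$, where the power singularity is balanced against the $H^\ga\II$-regularity of $q$ (using $q(0)=0$ when $\ga>1/2$) and where the positivity of $\al-3/2$ is exactly the margin that makes the product land in $\Hdi0{\min(\ga,\al-2+\beta)}$, and its complement, on which $x^{\al-2}$ is smooth so that the product inherits the full regularity of $q$. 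This is the unique place where the hypothesis $\al\in(3/2,2)$ is indispensable; every other step parallels the Dirichlet analysis.
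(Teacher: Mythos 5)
Your proposal follows the same skeleton as the paper's (very terse) proof: well-posedness by transplanting the compact-perturbation/Petree--Tartar argument of Theorem \ref{thm:wpreg} to the pair $(U,V)$ using the inf-sup bounds already established for $A(\cdot,\cdot)$ when $q\equiv 0$, then regularity by reading $u_0=-{_0\hspace{-0.3mm}I^\al_x}(f-qu)$ off the representation \eqref{eqn:sol1} and invoking Lemma \ref{lem:Hmulti} together with Theorem \ref{thm:fracop}(c). Where you genuinely diverge is the treatment of $qu$: the paper simply asserts ``by Lemma \ref{lem:Hmulti}, $qu\in \Hd{\min(\al-2+\beta,\gamma)}$,'' but that lemma requires both factors to lie in $L^\infty\II$, and here $u=u_0-u_0(1)x^{\al-2}$ is unbounded at $x=0$ since $\al-2<0$. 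You correctly flag this, split $qu=qu_0-u_0(1)\,qx^{\al-2}$, apply Lemma \ref{lem:Hmulti} only to the bounded piece $qu_0$, and isolate the weighted product $qx^{\al-2}$ as the step needing a separate argument. This is a more honest route than the paper's, and the target regularity $\ell_n(\beta,\gamma)<\al-3/2<1/2$ makes the claimed membership $qx^{\al-2}\in\Hd{\ell_n(\beta,\gamma)}$ plausible (below the exponent $1/2$ one has more room for products with mildly singular weights); however, you only sketch that estimate, so it remains the one unproved ingredient of your argument --- note also that your sketch implicitly uses $q\in L^\infty\II$, which the theorem as stated does not list but which is assumed everywhere else in the paper and is genuinely needed (for $\gamma<1/2$ membership in $\Hd\gamma$ alone does not even guarantee $qx^{\al-2}\in L^2\II$). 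In short: same approach as the paper, with one step carried out more carefully than the paper itself, and that step left as a sketch rather than a proof.
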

\begin{proof}
Under Assumption \ref{ass:riem-nm}, the proof of uniqueness and existence of a solution $u\in U$ is similar
to that of Theorem \ref{thm:wpreg}, and hence omitted. By Lemma \ref{lem:Hmulti}, the function $qu \in \Hd
{\min(\al-2+\beta,\gamma)}$. Then the regularity of  $u_0$ follows from the representation $u_0(x)= -(
{_0\hspace{-0.3mm}I^{\al}_x} (f-qu))(x)$ and Theorem \ref{thm:fracop}(c).
\end{proof}

\subsection{New variational formulation and FE approximation}
A direct application of the variational formulation \eqref{varr-nm} is inefficient for the numerical solution
of problem \eqref{strongp1}, due to the low solution regularity, as a consequence of the presence of the term $x^{\alpha-2}$.
To enhance the efficiency, we employ the singularity reconstruction technique, and consider the following splitting
\begin{equation}\label{splitting-nm}
 u= u^r + \mu (x^{\al-2}-x^2).
\end{equation}
Repeating the arguments in Section \ref{sec:formulation} yields the following integro-differential problem for the regular part $\ur$
\begin{equation}\label{eqn:eqnforreg-nm}
  \begin{aligned}
    -\DDR 0 \alpha \ur + q\ur + \left({_0\hspace{-0.3mm}I^\al_x}(q\ur)\right)& (1)Q  = \tf  \quad  \mbox{ in } D,\\
   \ur(0)=\ur(1)=0. &
   \end{aligned}
\end{equation}
It is worth noting that problem \eqref{eqn:eqnforreg-nm} has a homogeneous Dirichlet boundary condition
like in Section \ref{sec:formulation}. In problem \eqref{eqn:eqnforreg-nm}, the functions $Q(x)$ and
$\tf(x)$ are given respectively by
\begin{equation}
  \begin{aligned}\label{eqn:Q+tf-nm}
     Q(x)&=c_0c_1(x)-c_0q(x)(x^{\al-2}-x^2)\in L^{2}\II,\\
    \tf(x)&=f(x)+c_0c_1(x)\left({_0\hspace{-0.3mm}I^\al_x}f\right)(1)-
       c_0\left({_0\hspace{-0.3mm}I^\al_x}f\right)(1)q(x)(x^{\al-2}-x^2)\in L^2\II.
  \end{aligned}
\end{equation}
where the constant $c_0$ and the function $c_1(x)$ are respectively defined by
\begin{equation}\label{eqn:c0-nm}
  c_0=\frac{1}{1+{_0\hspace{-0.3mm}I^\al_x} (q(x^{\al-2}-x^2)) (1)}
  \quad \text{and}\quad c_1= {\DDR0{\al} (x^{\al-2}-x^{2})}=-\frac{2}{\Gamma(3-\al)}x^{2-\al}.
\end{equation}

The preceding discussions indicate that the singularity reconstruction technique handles the mixed problem
in the same manner as for the Dirichlet problem, with their only difference lying in replacing the function
$x^{\alpha-1}$ in the Dirichlet case with $x^{\alpha-2}$ in the mixed case. This shows clearly its versatility.
Hence, the corresponding bilinear form $a_r(\cdot,\cdot)$ is given by
\begin{equation*}\label{eqn:ar-nm}
    a_r(u,v)= A(u,v)+(qu,v)+ {_0\hspace{-0.3mm}I^\al_x}(qu)(1)(Q,v)\quad \forall u,v\in V\equiv\Hd{\al/2},
\end{equation*}
where the bilinear form $a_r(\cdot,\cdot)$ is defined in \eqref{eqn:ar}, and the function $Q$ is defined in \eqref{eqn:Q+tf-nm}.
Then the variational problem for the regular part $\ur$ reads: find $\ur \in V$ satisfying
\begin{equation}\label{eqn:var2-nm}
 a_r(\ur,v)= (\tf,v) \quad \forall v\in V.
\end{equation}
Once the regular part $u^r$ is determined, the singularity strength $\mu$ can be obtained by
\begin{equation}\label{eqn:mu-nm}
  \mu=c_0 \left({_0\hspace{-0.3mm}I^\al_x} (f-qu^r)\right) (1),
\end{equation}
Finally the solution $u$ of the source problem \eqref{strongp1} is recovered by \eqref{splitting-nm}.
The variational formulation \eqref{eqn:var2-nm} lends itself to the following discrete problem:
find $\ur_h\in V_h$ such that
\begin{equation}\label{eqn:fem-nm}
    a_r(\ur_h,v)=(\tf,v) \quad \forall v \in V_h,
\end{equation}
where the bilinear form $a_r(\cdot,\cdot)$ and the source term $\tf\in L^2\II$ are defined in
\eqref{eqn:var2-nm} and \eqref{eqn:Q+tf-nm}, respectively. Then we construct
a finite element approximation $\mu_h$ to the strength $\mu$ of the singular part $u^s$ by
\begin{equation}\label{eqn:muh-nm}
  \mu_h=c_0 \left({_0\hspace{-0.3mm}I^\al_x} (f-q\ur_h)\right) (1),
\end{equation}
where the constant $c_0$  is defined in \eqref{eqn:c0-nm}. Last, we construct an approximate
solution $u_h$ to \eqref{strongp1} by
\begin{equation}\label{eqn:uh-nm}
  u_h=\ur_h+\mu_h(x^{\al-2}-x^2).
\end{equation}

We have the following $L^\infty\II$, $L^2\II$ and $\Hd {\al/2}$-norm error estimates on the
Galerkin approximation $u_h^r$ and the singularity strength $\mu_h$. The proof is omitted since
it is identical to that in Section \ref{sec:newFEM}.
\begin{theorem}\label{thm:regerror-Neumann}
Let Assumption \ref{ass:riem2} hold, $f\in \Hd {\ga} $, and $ q \in \Hd{\ga} \cap L^\infty\II$, $0\leq \gamma\leq1$, $\gamma\neq1/2$.
Then there is an $h_0>0$ such that for all $h\le h_0$, the solution $\ur_h$ to problem
\eqref{eqn:fem-nm} satisfies for any $\beta\in[1-\alpha/2,1/2)$, with $\ell_n(\beta,\gamma)=:\min(\al-2+\beta,\ga)$
\begin{equation*}
\begin{split}
   \|\ur-\ur_h\|_{L^2\II} + h^{\al/2-1+\beta}&\|\ur-\ur_h\|_{\Hd{\alpha/2}}
   \le C h^{\min(2,\al+\ell_n(\beta,\gamma))-1+\beta} \|f\|_{\Hd\ga}.\\
\end{split}
\end{equation*}
Further, let $\mu$ and $\mu_h$ be defined in \eqref{eqn:mu-nm} and \eqref{eqn:muh-nm}. Then there holds
\begin{equation*}
 |\mu-\mu_h| \le  C h^{\min(2,\al+\ell_n(\beta,\gamma))-1+\beta} \|f\|_{\Hd\ga}.
\end{equation*}
\end{theorem}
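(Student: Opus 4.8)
The plan is to transcribe, essentially line for line, the program of Section~\ref{sec:newFEM}, since the variational problem \eqref{eqn:var2-nm} for the regular part of the mixed problem has exactly the same structure as \eqref{eqn:var2}: it is posed with the bilinear form $a_r(\cdot,\cdot)$ on $V\times V$, $V=\Hd{\al/2}$, the sole change being that the function $x^{\al-1}-x^2$ is replaced by $x^{\al-2}-x^2$ throughout the definitions of $c_0$, $c_1$, $Q$ and $\tf$ in \eqref{eqn:Q+tf-nm}--\eqref{eqn:c0-nm}. First I would re-establish the three analytic ingredients that drive the Dirichlet estimates. (i) \emph{Well-posedness and stability of the continuous problem}: the bound \eqref{eqn:bconti} on $b(\cdot,\cdot)$ persists because $Q\in L^2\II$ already suffices to estimate $|(Q,v)|$ via $\Hd{\al/2}\hookrightarrow L^2\II$, while ${}_0I^\al_x(qu)(1)$ is still well defined since $qu\in L^2\II$ gives ${}_0I^\al_x(qu)\in\Hdi0\al\hookrightarrow C[0,1]$; the Petree--Tartar argument of Theorem~\ref{thm:wpreg} then runs verbatim once the operator $T_1$ is shown to be compact (see the last paragraph), yielding the continuous inf-sup bound \eqref{inf-suprl} for the present $a_r$ under Assumption~\ref{ass:riem2}. (ii) \emph{Regularity of the regular part}: by Theorem~\ref{thm:reg-nm} and the reconstruction, $\ur=u_0+\mu x^2$ with $u_0=-{}_0I^\al_x(f-qu)\in\Hdi0{\al+\ell_n(\beta,\ga)}$ and $|\mu|\le C\|f\|_{\Hd\ga}$, whence $\ur\in H^{\min(2,\al+\ell_n(\beta,\ga))}\II\cap\Hd{\al/2}$ with $\|\ur\|_{H^{\min(2,\al+\ell_n(\beta,\ga))}\II}\le C\|f\|_{\Hd\ga}$. (iii) \emph{Adjoint regularity}: the dual problem to \eqref{eqn:var2-nm} with right-hand side $(v,g)$ has precisely the form \eqref{eqn:dualstrong} (with the new $Q$), so Theorem~\ref{thm:dualreg} still supplies $w\in\Hdi1{\al-1+\beta}\cap\Hd{\al/2}$ with $\|w\|_{\Hdi1{\al-1+\beta}}\le C\|g\|_{L^2\II}$.

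Granting (i)--(iii), the discrete theory is mechanical and identical to Section~\ref{sec:newFEM}. The discrete inf-sup inequality on $V_h\times V_h$ follows from \eqref{inf-suprl} by the Schatz kick-back argument: write $a_r(z_h,v)=a_r(z_h,v-R_hv)+a_r(z_h,R_hv)$, use Galerkin orthogonality of the adjoint Ritz projection $R_h$ to replace $a_r(z_h,v-R_hv)$ by $b(z_h,v-R_hv)$, bound the latter by \eqref{eqn:bconti} together with the suboptimal estimate \eqref{eqn:Ritzfrac}, and absorb the $O(h^{\al/2-1+\beta})$ term for $h\le h_0$; this gives unique solvability of \eqref{eqn:fem-nm}. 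The $\Hd{\al/2}$-estimate for $\ur-\ur_h$ is then C\'{e}a's lemma combined with Lemma~\ref{fem-interp-U} and (ii); the $L^2\II$-estimate is Nitsche's trick against the dual problem, using (iii) and the continuity \eqref{eqn:arcontinuous}. Together they give $\|\ur-\ur_h\|_{L^2\II}+h^{\al/2-1+\beta}\|\ur-\ur_h\|_{\Hd{\al/2}}\le Ch^{\min(2,\al+\ell_n(\beta,\ga))-1+\beta}\|f\|_{\Hd\ga}$, exactly as in Theorem~\ref{thm:regerror}. (The corresponding $L^\infty\II$-bound, of the same order, is obtained identically to Theorem~\ref{thm:femerrorinf} via the Green's-function duality, using that the Green's function for \eqref{eqn:eqnforreg-nm} again lies in $\Hdi1{\al-1+\beta}$.) Finally, the bound on the strength is the analogue of Lemma~\ref{lem:singerror}: since $q(\ur-\ur_h)\in L^2\II$, one has $|\mu-\mu_h|=|c_0|\,\big|{}_0I^\al_x(q(\ur-\ur_h))(1)\big|\le C\|{}_0I^\al_x(q(\ur-\ur_h))\|_{\Hdi0\al}\le C\|\ur-\ur_h\|_{L^2\II}$ by Theorem~\ref{thm:fracop}(c) and $\Hdi0\al\hookrightarrow C[0,1]$, and the claimed rate for $|\mu-\mu_h|$ drops out of the $L^2\II$-estimate.

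The one genuinely new point --- and the reason $\al\in(3/2,2)$ is imposed --- lies in step~(i): here $c_1(x)=-\tfrac{2}{\Ga(3-\al)}x^{2-\al}$ and $q(x)(x^{\al-2}-x^2)$ carry the factor $x^{\al-2}$ with $\al-2\in(-1/2,0)$, so $Q$ is merely in $L^2\II$, not $L^\infty\II$, and the Hilbert--Schmidt/compactness step of Theorem~\ref{thm:wpreg} must be re-examined. One checks that the integral operator $T_1$, with kernel $Q(x)q(y)(1-y)^{\al-1}/\Ga(\al)$, is still Hilbert--Schmidt, i.e.\ that this kernel lies in $L^2(D\times D)$: indeed $\|Q\|_{L^2\II}<\infty$ precisely because $x^{\al-2}\in L^2\II$, i.e.\ $2(\al-2)>-1$ ($\al>3/2$), and $\|q(\cdot)(1-\cdot)^{\al-1}\|_{L^2\II}<\infty$ since $q\in L^\infty\II$ and $\al>1$. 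With $T_1$ compact (and $T_2$ compact as before, by the compact embedding $V\hookrightarrow L^2\II$), the Petree--Tartar lemma applies and (i) is complete; everything else is a word-for-word transcription of the Dirichlet argument with $x^{\al-1}$ replaced by $x^{\al-2}$. I expect this $L^2\II$-bookkeeping for $Q$ and the accompanying Hilbert--Schmidt check to be the main --- indeed essentially the only --- obstacle.
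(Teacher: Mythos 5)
Your proposal is correct and follows exactly the route the paper intends: the paper omits the proof of Theorem \ref{thm:regerror-Neumann} entirely, stating only that it is identical to the argument of Section \ref{sec:newFEM}, and your transcription of the inf-sup/Schatz kick-back, C\'{e}a, Nitsche, and singularity-strength steps is precisely that argument with $x^{\al-1}$ replaced by $x^{\al-2}$. Your explicit verification that $Q\in L^2\II$ (rather than $L^\infty\II$) still suffices for the continuity bound \eqref{eqn:bconti} and for the Hilbert--Schmidt compactness of $T_1$ --- the one place the two cases genuinely differ, and the reason for the restriction $\al\in(3/2,2)$ --- is a detail the paper glosses over, and you handle it correctly.
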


\begin{remark}
The $L^2\II$ convergence rate of the approximation $u_h$ defined in \eqref{eqn:uh-nm} follows directly from Theorem
\ref{thm:regerror-Neumann}. Like the Dirichlet case, the $L^2\II$ error estimate is suboptimal, due to the limited
regularity of the adjoint solution. However, the $L^\infty\II$ and $\Hdi 1 {\alpha/2}$ error estimates do not follow, since
the exact solution $u$ is generally neither bounded nor in $\Hdi 1 {\alpha/2}$, due to the presence of the term $x^{\alpha-2}$
in the solution representation.
\end{remark}
\section{Numerical results and discussions}\label{sec:numeric}
In this section we present numerical experiments to verify our theoretical findings. We consider
the following three different source terms:
\begin{itemize}
  \item[(a)] The source term $f(x)=x(1-x)$ belongs to the space $\Hd {1+\ep}$ for any $\ep\in [0,1/2)$.
  \item[(b)] The source term $f(x)=\chi_{[0,1/2]}$ belongs to the space $\Hd {\ep}$ for any $\ep\in [0,1/2)$.
  \item[(c)] The source term $f(x)=x^{-1/4}$ belongs to the space $\Hd {\ep}$ for any $\ep\in [0,1/4)$.
\end{itemize}

The computations were performed on uniform meshes of mesh sizes $h=1/2^k$, $k=5,6,\ldots,10$.
We note that if the potential $q$ is zero, the exact solution $u$ can be computed directly using the
solution representation \eqref{eqn:sols}, and similarly the regular part $\ur$ can be evaluated in
closed form. For the case $q\neq 0$, the exact solution is not available explicitly, and hence
we compute the reference solution using a very refined mesh with a mesh size $h= 1/2^{13}$. For each example,
we consider three different $\alpha$ values, i.e., $5/4$, $3/2$ and $7/4$, and present the
$L^2\II$, $\Hd {\al/2}$, and $L^\infty\II$-norm of the error $e=u^r-u_h^r$ of the regular
part and the error $|\mu-\mu_h|$ of the singularity strength $\mu$ separately.

\subsection{Numerical experiments for example (a)}
We begin with the simple case $q=0$. By the representations \eqref{eqn:sols},
the exact solution $u(x)$ is given by
\begin{equation*}
  u(x) =  \frac{1}{\Gamma(\alpha+2)}(x^{\alpha-1}-x^{\alpha+1})-\frac{2}{\Gamma(\alpha+3)}(x^{\alpha-1}-x^{\alpha+2}),
\end{equation*}
and it belongs to $\Hdi0{\alpha-1+\beta}$ with $\beta\in[1-\alpha/2,1/2)$ due to the presence of the
term $x^{\al-1}$. Thus the standard Galerkin FEM converges slowly; see \cite[Table 1]{JinLazarovPasciak:2013a}
and Table \ref{tab:Linfsmooth-q=0}. In the table, \texttt{rate} refers to the empirical convergence rate when the
mesh size $h$ halves, and the numbers in the bracket denote theoretical rates. The results in Table
\ref{tab:Linfsmooth-q=0} indicate that the $L^\infty\II$ estimate in Theorem \ref{thm:linf} is sharp. By
the definition \eqref{eqn:reg+sing}, in the singularity splitting, the regular part $u^r$ and the singular
part $u^s$ are given respectively by
\begin{equation*}
  u^r=\frac{2(x^{\al+2}-x^2)}{\Gamma(\alpha+3)}+\frac{x^2-x^{\al+1}}{\Gamma(\alpha+2)}\quad\mbox{and}\quad
  \mu u^s=\left(\frac{1}{\Gamma(\alpha+2)}-\frac{2}{\Gamma(\alpha+3)}\right)(x^{\al-1}-x^2).
\end{equation*}
In particular, the regular part $u^r$ belongs to $H^2\II$. Since the singularity strength
$\mu=1/\Gamma(\alpha+2)-2/\Gamma(\alpha+3)$ does
not depend on the regular part $u^r$, it suffices to check errors for $e=u^r-u^r_h$.
In Table \ref{tab:smooth-q=0} we show the errors $\|e\|_{L^2\II}$, $\| e
\|_{\widetilde H^{\al/2}\II}$ and $\|e\|_{L^\infty\II}$. The numerical results show $O(h^{2})$, $O(h^{2-\al/2})$
and $O(h^2)$ for the $L^2\II$, $\Hd{\al/2}$ and $L^\infty\II$-norms of the error, respectively.
The $\Hd{\al/2}$ estimate is fully confirmed; however, the $L^2\II$ and $L^\infty\II$
estimates are suboptimal: the empirical ones are one half order higher than the theoretical ones.
The suboptimality is attributed to the low regularity of the adjoint problem
\eqref{eqn:dual}, used in Nitsche's trick.

\begin{table}[hbt!]
\caption{The $L^\infty\II$-norm of the error by the standard Galerkin approximation
for example (a) with $q=0$, $\al=1.25, 1.5, 1.75$, $h=1/2^k$.}\label{tab:Linfsmooth-q=0}
\vspace{-.3cm}
\begin{center}
     \begin{tabular}{|c|cccccc|c|}
     \hline
     $k$  &$5$ &$6$ & $7$ &$8$ &$9$ & $10$ &rate \\
     \hline
     $\al=1.25$     &2.91e-2 &2.44e-2 &2.05e-2 &1.73e-2 &1.45e-2 &1.22e-2 &$\approx$ 0.25 ($0.25$)\\
    \hline
     $\al=1.5$      &4.87e-3 &3.44e-3 &2.42e-3 &1.71e-3 &1.21e-3 &8.55e-4 &$\approx$ 0.50 ($0.50$)\\
     \hline
     $\al=1.75$     &7.46e-4 &4.37e-4 &2.59e-4 &1.54e-4 &9.16e-5 &5.44e-5 &$\approx$ 0.75 ($0.75$)\\
     \hline
     \end{tabular}
\end{center}
\end{table}

\begin{table}[hbt!]
\caption{The $L^2\II$-, $\tilde{H}^{\al/2}\II$- and $L^\infty\II$-norm of the error $e=u^r-u_h^r$ for
example (a) with $q=0$, $\al=1.25, 1.5, 1.75$, $h=1/2^k$.}
\label{tab:smooth-q=0}
\vspace{-.3cm}
\begin{center}
     \begin{tabular}{|c|c|cccccc|c|}
     \hline
     $\al$  & $k$ &$5$ &$6$ &$7$ & $8$ & $9$ & $10$  &rate \\
     \hline
     $1.25$ & $L^2$  &6.56e-5 &1.64e-5 &4.11e-6 &1.03e-6 &2.56e-7 &6.33e-8 &$\approx$ 2.00 ($1.50$) \\
     \cline{2-8}
     & $\tilde{H}^{\al/2}$  &2.98e-4 &1.11e-4 &4.23e-5 &1.62e-5 &6.21e-6 & 2.39e-6 &$\approx$ 1.36 ($1.38$)\\
     \cline{2-8}
     & $L^\infty$     &1.16e-4 &2.92e-5  &7.33e-6& 1.84e-6& 4.59e-7 &1.15e-7 & $\approx$ 2.00 ($1.50$)\\
    \hline
     $1.5$ & $L^2$    &3.62e-5 &9.16e-6 &2.31e-6 &5.79e-7 &1.45e-7 & 3.59e-8& $\approx$ 2.00 ($1.50$)\\
     \cline{2-8}
     & $\tilde{H}^{\al/2}$    &4.58e-4& 1.90e-4 &7.92e-5 &3.32e-5 &1.39e-5 &5.28e-6&$\approx$ 1.25 ($1.25$)\\
     \cline{2-8}
     & $L^\infty$      &7.58e-5 &1.92e-5 &4.81e-6 &1.21e-6 &3.02e-7 & 7.55e-8 &$\approx$ 1.99 ($1.50$)\\
     \hline
     $1.75$ & $L^2$     &1.59e-5 &4.11e-6 &1.05e-6 &2.69e-7 &6.84e-8 & 1.72e-8&$\approx$ 1.97 ($1.50$)\\
     \cline{2-8}
     & $\tilde{H}^{\al/2}$  &5.57e-4 &2.54e-4  &1.16e-4 &5.30e-5 &2.43e-5 &1.11e-5 &$\approx$ 1.13 ($1.13$)\\
     \cline{2-8}
     & $L^\infty$    &4.32e-5 &1.10e-5  &2.77e-6  &6.96e-7 &1.74e-7 & 4.36e-8 &$\approx$ 2.00 ($1.50$)\\
     \hline
     \end{tabular}
\end{center}
\end{table}

Next we check the problem with the potential $q=x(1-x)\in \Hd {1+\epsilon}$, $\epsilon\in[0,1/2)$.
Thus by Theorem \ref{thm:regrl-new}, the regular part $u^r$ belongs to $H^2\II$, and Theorems
\ref{lem:femerror} and \ref{thm:femerrorinf} predict almost $O(h^{3/2})$, $O(h^{2-\al/2})$
and $O(h^{3/2})$ for the $L^2\II$, $\Hd{\al/2}$,
and $L^\infty\II$-norms of the error, respectively. The numerical results fully confirm the $\Hd{\al/2}$ estimate,
but show one half order higher convergence for the $L^2\II$ and $L^\infty\II$-norm of the error, cf.
Table \ref{tab:smooth-q=smooth}. Further, we observe that the influence of the
potential term on the approximation error is negligible. The numerical results in Table
\ref{tab:smooth-q=smooth-la} show that the error $|\mu_h-\mu|$ of the
reconstructed singular strength $\mu$ achieves a second-order
convergence, which is higher than the theoretical rate from Lemma \ref{lem:singerror} by one half order.

\begin{table}[hbt!]
\caption{The $L^2\II$-, $\tilde{H}^{\al/2}\II$- and $L^\infty\II$-norm of the error $e=u^r-u^r_h$ for
example (a) with $q=x(1-x)$, $\al=1.25, 1.5, 1.75$, $h=1/2^k$.}\label{tab:smooth-q=smooth}
\vspace{-.3cm}
\begin{center}
     \begin{tabular}{|c|c|cccccc|c|}
     \hline
     $\al$  & $k$ & $5$ &$6$ &$7$ & $8$ & $9$ & $10$&rate \\
     \hline
     $1.25$ & $L^2$   &6.42e-5 &1.61e-5 &4.02e-6 &1.00e-6 &2.51e-7 &6.19e-8 &$\approx$ 2.01 ($1.50$) \\
     \cline{2-8}
     & $\tilde{H}^{\al/2}$   &2.80e-4 &1.04e-4 &3.96e-5 &1.51e-5 &5.81e-6 &2.23e-6&$\approx$ 1.38 ($1.38$)\\
     \cline{2-8}
     & $L^\infty$      &1.13e-4 &2.84e-5 &7.12e-6 &1.78e-6  &4.46e-7 &1.12e-7 &$\approx$ 2.00 ($1.50$)\\
    \hline
     $1.5$ & $L^2$   &3.50e-5 &8.86e-6 &2.23e-6 &5.61e-7 &1.40e-7 &3.48e-8 &$\approx$ 2.01 ($1.50$)\\
     \cline{2-8}
     & $\tilde{H}^{\al/2}$   &4.34e-4 &1.80e-4 &7.50e-5 &3.14e-5 &1.32e-5 &5.52e-6 &$\approx$ 1.25 ($1.25$)\\
     \cline{2-8}
     & $L^\infty$    &7.35e-5 &1.86e-5 &4.67e-6 &1.17e-6 &2.93e-7 &7.33e-8&$\approx$ 2.00 ($1.50$)\\
     \hline
     $1.75$ & $L^2$    &1.54e-5 &3.97e-6 &1.02e-6 &2.61e-7 &6.62e-8 &1.66e-8 &$\approx$ 1.97 ($1.50$)\\
     \cline{2-8}
     & $\tilde{H}^{\al/2}$   &5.34e-4 &2.43e-4 &1.11e-4 &5.08e-5 &2.33e-5 &1.06-5 &$\approx$ 1.13 ($1.13$)\\
     \cline{2-8}
     & $L^\infty$    &4.20e-5 &1.07e-5  &2.70e-6  &6.78e-7 &1.70e-7 &4.25e-8 &$\approx$ 2.00 ($1.50$)\\
     \hline
     \end{tabular}
\end{center}
\end{table}

\begin{table}[hbt!]
\caption{$|\mu-\mu_h|$ for example (a) with $q=x(1-x)$, $\al=1.25, 1.5, 1.75$, $h=1/2^k$.}\label{tab:smooth-q=smooth-la}
\vspace{-.3cm}
\begin{center}
     \begin{tabular}{|c|cccccc|c|}
     \hline
     $k$  &$5$ &$6$ & $7$ &$8$ &$9$ & $10$ &rate \\
     \hline
     $\al=1.25$    &8.62e-6 &2.16e-6 &5.40e-7 &1.35e-7 &3.33e-8 &7.93e-9  &$\approx$ 2.02 ($1.50$) \\
     \cline{1-8}
     $\al=1.5$     &3.70e-6 &9.43e-7 &2.39e-7 &6.01e-8 &1.49e-8 &3.57e-9  &$\approx$ 2.01 ($1.50$)\\
     \cline{1-8}
     $\al=1.75$    &9.49e-7 &2.60e-7  &6.96e-8 &1.83e-8 &4.72e-9 &1.16e-9  &$\approx$ 1.96 ($1.50$)\\
     \hline
     \end{tabular}
\end{center}
\end{table}

Apart from the reconstruction technique, there are alternative strategies for enhancing the computational
efficiency of the standard Galerkin FEM. Mesh grading is one such possible choice, and clearly,
it preserves the variational formulation. In Table
\ref{tab:smooth-q=smooth-meshgrad}, we present numerical results of the standard Galerkin FEM using graded
meshes. Specifically, we consider the grid $x_j=(jh)^\delta$ with some $\delta\ge1$, $j=0,1,\ldots,m$. The choice $\delta=1$
corresponds to a uniform mesh, whereas the choice $\delta > 1$ makes the mesh graded near $x = 0$, thereby
compensating the singularity of the solution. It is observed from Table \ref{tab:smooth-q=smooth-meshgrad}
that the optimal convergence rate is indeed achieved for a sufficiently large $\delta$. Further, its accuracy
is comparable with the reconstruction approach, cf. Table \ref{tab:smooth-q=smooth}. However, computationally,
for a graded mesh, it is far more expensive to form the stiffness matrix, since it lacks
nice structure, whereas for a uniform mesh, it is Toeplitz (plus rank-one perturbation). Further, in the mesh grading approach, the
singularity is hard wired in the implementation and cannot automatically adapted to the case of vanishing singularity:
if the solution is indeed smooth instead weakly singular, then the use of a graded mesh is wasteful; whereas
the reconstruction technique can adapt itself automatically to the case of a vanishing singularity by recovering $\mu=0$,
which makes it more flexible than the one based on graded meshes.

\begin{table}[hbt!]
\caption{The $L^2\II$-, $\tilde{H}^{\al/2}\II$- and $L^\infty\II$-norm of the error $u-u_h$ for
example (a) with $q=x(1-x)$, $\al=1.25, 1.75$, $h=1/2^k$ and $\delta=2,5$, using graded meshes.}\label{tab:smooth-q=smooth-meshgrad}
\vspace{-.3cm}
\begin{center}
     \begin{tabular}{|c|c|c|cccccc|c|}
     \hline
     $\al$  & $\delta$ &$k$ & $3$ &$4$ &$5$ & $6$ & $7$ & $8$&rate \\
     \hline
      & &$L^2$   &5.18e-3 &1.59e-3 &5.41e-4 &1.86e-4 &6.34e-5 &2.28e-5 &$\approx$ 1.56  \\
     \cline{3-9}
     &2 &$\tilde{H}^{\al/2}$   &5.67e-2 &4.59e-2 &3.70e-2 &2.89e-2 &2.05e-2 &1.30e-2&$\approx$ 0.42 \\
     \cline{3-9}
     $1.25$& &$L^\infty$      &2.46e-2 &1.64e-2 &1.15e-2 &8.10e-3  &5.73e-3 &4.05e-3 &$\approx$ 0.52 \\
    \cline{2-10}
      & &$L^2$   &6.63e-3 &3.04e-3 &9.08e-4 &2.40e-4 &6.13e-5 &1.57e-5 &$\approx$ 1.97 \\
     \cline{3-9}
     &5 &$\tilde{H}^{\al/2}$   &1.79e-2 &6.16e-3 &2.22e-3 &7.69e-4 &2.63e-4 &9.31e-5&$\approx$ 1.52 \\
     \cline{3-9}
     & &$L^\infty$      &9.65e-3 &5.00e-3 &1.61e-3 &4.42e-3  &1.15e-4 &2.98e-5 &$\approx$ 1.92 \\
    \hline
      && $L^2$    &3.47e-4 &8.24e-5 &2.06e-5 &5.29e-6 &1.38e-6 &3.58e-7 &$\approx$ 1.98 \\
     \cline{3-9}
     &2 &$\tilde{H}^{\al/2}$   &5.34e-4 &2.43e-4 &1.11e-4 &5.08e-5 &2.33e-5 &1.06-5 &$\approx$ 1.02 \\
     \cline{3-9}
     $1.75$& &$L^\infty$    &4.20e-5 &1.07e-5  &2.70e-6  &6.78e-7 &1.70e-7 &4.25e-8 &$\approx$ 1.70 \\
     \cline{2-10}
      & & $L^2$    &2.19e-3 &3.73e-4 &9.19e-5 &2.46e-5 &6.69e-6 &1.80e-6 &$\approx$ 2.05 \\
     \cline{3-9}
     & 5&$\tilde{H}^{\al/2}$ &1.41e-2 &6.53e-3 & 3.07e-3& 1.41e-3  &6.47e-4 &2.96e-4  &$\approx$ 1.11\\
     \cline{3-9}
     & &$L^\infty$  &3.90e-3 &9.29e-4 &1.88e-4  &7.25e-5 &2.19e-5  &5.93e-6   &$\approx$ 1.87\\
     \hline
     \end{tabular}
\end{center}
\end{table}

\subsection{Numerical experiments for example (b)}
In Tables \ref{tab:intermediate-q=smooth} and \ref{tab:intermediate-q=smoothla} we present numerical results for
problem (b) with $q=x(1-x)$. Since the source term $f$ is in $\Hd{\ep}$, $\ep \in [0,1/2)$, regular part $\ur$ is
in $H^2\II$ for $\al \in (3/2,2)$, whereas for $\al \in (1,3/2]$, $\ur$ is in the space $H^{\al+\ep}$ with $\ep \in
[0,1/2)$. In Table \ref{tab:intermediate-q=smooth}, the numerical results exhibit a convergence rate of the order
$O(h^{2})$ in the $L^2\II$-norm, and $O(h^{2-\al/2})$ in the $\Hd{\alpha/2}$-norm, for both $\al=1.5$ and $1.75$.
Further, we observe that the $L^2\II$ and $\Hd{\al/2}$-errors are respectively only of order $O(h^{1.75})$ and
$O(h^{1.13})$ for $\alpha=1.25$, due to a lack of regularity of the regular part $u^r$. The empirical $L^2\II$ rate
is one half order higher than the theoretical prediction. Further, for the case of $\alpha=1.75$, we observe a superconvergence
phenomenon for the $L^\infty\II$-estimate, whereas for the case $\alpha=1.5$ and $1.25$, the $L^\infty\II$-estimate
agrees with our convergence theory. These experiments indicate an $L^\infty\II$-norm of the error in the form $O(h^{\min(2,\alpha+\ell(\beta,
\gamma)-1/2)})$, instead of $O(h^{\min(2,\al+\ell(\beta,\gamma))-1+\beta})$ from Theorem \ref{thm:femerrorinf}, and
a superconvergence phenomenon occurs if $\alpha+\ell(\beta,\gamma)>2$. Surprisingly, the error $|\mu-\mu_h|$ of the
recovered singularity strength $\mu_h$ achieves a
second-order convergence even for $\al=1.25$, cf. Table \ref{tab:intermediate-q=smoothla}, which remains to be justified.

\begin{table}[hbt!]
\caption{The $L^2\II$-, $\tilde{H}^{\al/2}\II$- and $L^\infty\II$-norm of the error $e=u^r-u_h^r$ for example
(b) with $q=x(1-x)$, $\al=1.25, 1.5, 1.75$, $h=1/2^k$.} \label{tab:intermediate-q=smooth}
\vspace{-.3cm}
\begin{center}
     \begin{tabular}{|c|c|cccccc|c|}
     \hline
     $\al$  & $k$ & $5$ &$6$ &$7$ & $8$ & $9$ & $10$ &rate \\
     \hline
     $1.25$ & $L^2$          &1.45e-4 &4.17e-5 &1.21e-5 &3.52e-6 &1.03e-6 &3.03e-7 &$\approx$ 1.77 ($1.25$) \\
     \cline{2-8}
     & $\tilde{H}^{\al/2}$   &1.51e-3 &7.11e-4 &3.33e-4 &1.55e-4 &7.18e-5 &3.33e-5 &$\approx$ 1.11 ($1.13$)\\
     \cline{2-8}
     & $L^\infty$            &8.59e-4 &3.89e-4 &1.71e-4 &7.36e-5  &3.14e-5 &1.33e-5&$\approx$ 1.23 ($1.25$)\\
    \hline
     $1.5$ & $L^2$           &7.18e-5 &1.87e-5 &4.83e-6 &1.25e-6 &3.20e-7 &8.16e-8 &$\approx$ 1.96 ($1.50$)\\
     \cline{2-8}
     & $\tilde{H}^{\al/2}$   &1.93e-3 &8.85e-4 &4.02e-4 &1.81e-4 &8.06e-5 &3.57e-5 &$\approx$ 1.17 ($1.25$)\\
     \cline{2-8}
     & $L^\infty$           &3.55e-4 &1.38e-4 &5.21e-5 &1.92e-5 &7.01e-6 &2.52e-6 &$\approx$ 1.47 ($1.50$)\\
     \hline
     $1.75$ & $L^2$          &2.78e-5 &6.93e-6 &1.72e-6 &4.24e-7 &1.04e-7 &2.55e-8 &$\approx$ 2.02 ($1.50$)\\
     \cline{2-8}
     & $\tilde{H}^{\al/2}$   &1.57e-3 &7.41e-4 &3.47e-4 &1.62e-4 &7.48e-5 &3.44e-5 &$\approx$ 1.11 ($1.13$)\\
     \cline{2-8}
     & $L^\infty$           &1.19e-4 &4.11e-5 &1.36e-5 &4.39e-6 &1.39e-6 &4.36e-7 &$\approx$ 1.68 ($1.50$)\\
     \hline
     \end{tabular}
\end{center}
\end{table}

\begin{table}[hbt!]
\caption{$|\mu-\mu_h|$ for example (b) with $q=x(1-x)$, $\al=1.25, 1.5, 1.75$, $h=1/2^k$.} \label{tab:intermediate-q=smoothla}
\vspace{-.3cm}
\begin{center}
     \begin{tabular}{|c|cccccc|c|}
     \hline
     $k$ & $5$ &$6$ & $7$ &$8$ &$9$ & $10$ &rate \\
     \hline
     $\al=1.25$    &1.25e-5 &3.11e-6 &7.73e-7 &1.92e-7 &4.75e-8 &1.13e-8 &$\approx$ 2.02 ($1.25$) \\
     \cline{1-8}
     $\al=1.5$    &5.76e-6 &1.41e-6 &3.48e-7 &8.60e-8 &2.11e-8 &5.00e-9 &$\approx$ 2.04 ($1.50$)\\
     \cline{1-8}
     $\al=1.75$   &1.85e-6 &4.52e-7 &1.11e-7 &2.72e-8 &6.63e-9 &1.56e-9 &$\approx$ 2.04 ($1.50$)\\
     \hline
     \end{tabular}
\end{center}
\end{table}

\subsection{Numerical experiments for example (c)}
In Tables \ref{tab:nonsmooth-q=smooth} and \ref{tab:nonsmooth-q=smoothla}, we present numerical results
for example (c) with $q=x(1-x)$. Since the source term $f(x)=x^{-1/4}$ is in $\Hd{\ep}$ with $\ep \in [0,1/4)$,
by Theorem \ref{thm:regrl-new}, the regular part $\ur$ is in $H^{\al +\ep}\II$, which implies a
convergence rate $O(h^{\al+\ep-1/2})$, $O(h^{\al/2+\ep})$ and $O(h^{\al+\ep-1/2})$ of the $L^2\II$-,
$\Hd{\al/2}$- and $L^\infty\II$-norm of the error, respectively, cf. Theorems \ref{lem:femerror} and
\ref{thm:femerrorinf}. The $L^2\II$, $\Hd{\al/2}$ and $L^\infty\II$-errors achieve
a rate $O(h^{\al+1/4})$, $O(h^{\al/2+1/4})$ and $O(h^{\al-1/4})$, respectively, for $\alpha=1.50$
and $1.75$. The convergence of the Galerkin approach slows down as the fractional order $\alpha$ tends to unity, due to the lower regularity pickup
of the regular part $u^r$, but the empirical behavior still agrees well with the theoretical predictions. However, for $\al=1.25$, the
approximation $u_h$ converges faster: a second-order convergence in both $L^2\II$ and
$L^\infty\II$-norms, and an $O(h^{1.38})$ rate in the $\Hd{\al/2}$-norm. This is attributed to the fact that
${_0I_x^{5/4}} x^{-1/4} = \Gamma(3/4)x$, which is actually much smoother than
$\Hdi 0{5/4+\ep}$, with $\ep\in[0,1/4)$, from Theorem \ref{thm:regrl-new}.
Interestingly, the approximation $\mu_h$ of the singularity strength $\mu$ is second order
accurate, cf. Table \ref{tab:nonsmooth-q=smoothla}, despite the low regularity of the data.

\begin{table}[hbt!]
\caption{The $L^2\II$-, $\tilde{H}^{\al/2}\II$- and $L^\infty\II$-norms of the error $e=u^r-u^r_h$ for
example (c) with $q=x(1-x)$, $\al=1.25, 1.5, 1.75$, $h=1/2^k$.} \label{tab:nonsmooth-q=smooth}
\vspace{-.3cm}
\begin{center}
     \begin{tabular}{|c|c|cccccc|c|}
     \hline
     $\al$  & $k$ & $5$ &$6$ &$7$ & $8$ & $9$ & $10$&rate \\
     \hline
     $1.25$ & $L^2$      &1.76e-4 &4.39e-5 &1.10e-5 &2.74e-6 &6.82e-7 &1.69e-7 &$\approx$ 2.00 ($1.00$) \\
     \cline{2-8}
     & $\tilde{H}^{\al/2}$     &1.61e-3 &6.16e-4 &2.36e-4 &9.09e-5 &3.50e-5 &1.35e-5 &$\approx$ 1.38 ($0.88$)\\
     \cline{2-8}
     & $L^\infty$              &2.76e-4 &6.92e-5 &1.73e-5 &4.33e-6 &1.08e-6 &2.71e-7 &$\approx$ 2.00 ($1.00$)\\
    \hline
     $1.5$ & $L^2$             &1.53e-4 &4.32e-5 &1.23e-5 &3.58e-6 &1.04e-6 &3.05e-7 &$\approx$ 1.76 ($1.25$)\\
     \cline{2-8}
     & $\tilde{H}^{\al/2}$     &4.18e-3 &2.11e-3 &1.07e-3 &5.40e-4 &2.73e-4 &1.38e-4 &$\approx$ 0.98 ($1.00$)\\
     \cline{2-8}
     & $L^\infty$              & 7.25e-4& 3.33e-4 &1.48e-4 &6.41e-5 &2.74e-5 &1.16e-5  &$\approx$ 1.21 ($1.25$)\\
     \hline
     $1.75$ & $L^2$               &8.91e-5 &2.33e-5  &6.05e-6 &1.56e-6 &4.02e-7 &1.02e-7 &$\approx$ 1.95 ($1.50$)\\
     \cline{2-8}
     & $\tilde{H}^{\al/2}$      &5.16e-3  &2.59e-3 &1.28e-3 &6.31e-4 &3.08e-4 &1.49e-4 &$\approx$ 1.05 ($1.13$)\\
     \cline{2-8}
     & $L^\infty$                &4.47e-4 &1.76e-4  &6.67e-5  &2.47e-5  &9.03e-6  &3.29e-7  &$\approx$ 1.45 ($1.50$)\\
     \hline
     \end{tabular}
\end{center}
\end{table}

\begin{table}[hbt!]
 \caption{$|\mu-\mu_h|$ for example (c) with $q=x(1-x)$, $\al=1.25, 1.5, 1.75$, $h=1/2^k$.} \label{tab:nonsmooth-q=smoothla}
\vspace{-.3cm}
\begin{center}
     \begin{tabular}{|c|cccccc|c|}
     \hline
     $k$ & $5$ &$6$ & $7$ &$8$ &$9$ & $10$ &rate \\
     \hline
     $\al=1.25$  &2.10e-5 &5.25e-6 &1.31e-6 &3.27e-7 &8.08e-8 &1.92e-8 &$\approx$ 2.03 ($1.00$) \\
     \cline{1-8}
     $\al=1.5$          &1.18e-5 &2.90e-6 &7.13e-7 &1.76e-7 &4.31e-8 &1.02e-8 &$\approx$ 2.05 ($1.50$)\\
     \cline{1-8}
     $\al=1.75$        &5.59e-6 &1.36e-6  &3.33e-7 &8.13e-8 &2.00e-8 &4.87e-9  &$\approx$ 2.03 ($1.50$)\\
     \hline
     \end{tabular}
\end{center}
\end{table}

\subsection{Numerical results for mixed boundary condition}
Last we illustrate the reconstruction technique for the mixed boundary condition, cf. Section \ref{sec:neumann}.
In Tables \ref{tab:Neumann-u} and \ref{tab:Neumann-mu}, we present numerical results for example (c) with $q=x
(1-x)$ and $\DDR 0 {\alpha-1} u(0)=0$ and $u(1)=0$, for three different $\alpha$ values. We observe a convergence rate of the order $O(h^{\min(2,
\al+\ell_n(\beta,\gamma))})$ in the $L^2\II$-norm, $O(h^{\min(2,\ell_n(\beta,\gamma))-\al/2})$ in the $\Hd{\alpha/2}$-norm
and $O(h^{\min(2,\al+\ell_n(\beta,\gamma))-1/2})$ in the $L^\infty\II$-norm for all $\al\in(3/2,2)$, which confirms
Theorem \ref{thm:regerror-Neumann}. Like before,
the approximate singularity strength $\mu_h$ achieves a second-order convergence even for $\al$ close to 1.5, cf. Table
\ref{tab:Neumann-mu}, which remains to be theoretically justified.

\begin{table}[hbt!]
\caption{The $L^2\II$-, $\tilde{H}^{\al/2}\II$- and $L^\infty\II$-norms of the error $ e= u^r-u^r_h$ for
example (c) with mixed boundary conditions, $q=x(1-x)$, $\al=1.6, 1.75, 1.9$, $h=1/2^k$.} \label{tab:Neumann-u}
\vspace{-.3cm}
\begin{center}
     \begin{tabular}{|c|c|cccccc|c|}
     \hline
     $\al$  & $k$ & $5$ &$6$ &$7$ & $8$ & $9$ & $10$&rate \\
     \hline
     $1.6$ & $L^2$      &1.14e-4 &3.19e-5 &8.89e-6 &2.47e-6 &6.86e-7 &1.90e-7 &$\approx$ 1.85 ($1.35$) \\
     \cline{2-8}
     & $\tilde{H}^{\al/2}$     &4.51e-3 &2.31e-3 &1.17e-3 &5.85e-4 &2.91e-4 &1.44e-4 &$\approx$ 0.98 ($0.90$)\\
     \cline{2-8}
     & $L^\infty$              &6.06e-4 &2.64e-4 &1.10e-4 &4.48e-5 &1.78e-5 &7.19e-6 &$\approx$ 1.31 ($1.35$)\\
    \hline
     $1.75$ & $L^2$             &8.24e-5 &2.17e-5 &5.68e-6 &1.47e-6 &3.80e-7 &9.73e-8 &$\approx$ 1.95 ($1.50$)\\
     \cline{2-8}
     & $\tilde{H}^{\al/2}$     &4.95e-3 &2.50e-3 &1.25e-3 &6.17e-4 &3.02e-4 &1.46e-4 &$\approx$ 1.03 ($1.13$)\\
     \cline{2-8}
     & $L^\infty$              & 4.51e-4& 1.78e-4 &6.72e-5 &2.49e-5 &9.07e-6 &3.30e-6  &$\approx$ 1.45 ($1.50$)\\
     \hline
      $1.9$ & $L^2$       &5.50e-5 &1.42e-5  &3.61e-6 &9.10e-7 &2.28e-7 &5.63e-8 &$\approx$ 1.99 ($1.50$)\\
     \cline{2-8}
     & $\tilde{H}^{\al/2}$      &4.68e-3  &2.39e-3 &1.20e-3 &6.00e-4 &2.97e-4 &1.46e-4 &$\approx$ 1.00 ($1.05$)\\
     \cline{2-8}
     & $L^\infty$                &2.82e-4 &1.02e-4  &3.54e-5  &1.20e-5  &4.02e-6  &1.31e-6  &$\approx$ 1.56 ($1.50$)\\
     \hline
     \end{tabular}
\end{center}
\end{table}

\begin{table}[hbt!]
 \caption{$|\mu-\mu_h|$ for example (c) with mixed boundary conditions, $q=x(1-x)$, $\al=1.6, 1.75, 1.9$, $h=1/2^k$.} \label{tab:Neumann-mu}
\vspace{-.3cm}
\begin{center}
     \begin{tabular}{|c|cccccc|c|}
     \hline
     $\alpha\backslash k$ & $5$ &$6$ & $7$ &$8$ &$9$ & $10$ &rate \\
     \hline
     $1.6$  &3.69e-5 &3.40e-6 &4.00e-7 &9.82e-8 &2.41e-8 &5.88e-9 &$\approx$ 2.03 ($1.00$) \\
     \cline{1-8}
     $1.75$          &2.07e-5 &1.11e-6 &2.68e-7 &6.53e-8 &1.59e-8 &3.83e-8 &$\approx$ 2.04 ($1.50$)\\
     \cline{1-8}
     $1.9$        &5.59e-6 &1.36e-6  &3.33e-7 &8.13e-8 &2.00e-8 &4.87e-9  &$\approx$ 2.02 ($1.50$)\\
     \hline
     \end{tabular}
\end{center}
\end{table}

\section{conclusion}\label{sec:conclusion}
In this work, we have developed and analyzed a new finite element technique for approximating boundary value problems
with a Riemann-Liouville fractional derivative in the leading term. It relies on splitting the solution into a regular
part and a singular part, where the regular part lies in $H^{\al+\ell(\beta,\gamma)}\II\cap \Hd{\al/2}$. We have derived
a new variational formulation for the regular part, and established its well-posedness and enhanced regularity. Further,
a Galerkin finite element approximation for the regular part and a reconstruction formula for the singular part have
been proposed. The stability of the discrete variational formulation, and error estimates of the regular part in
$\Hd{\alpha/2}$, $L^2\II$, and $L^\infty\II$-norms, and the reconstructed singularity strength were established, which
are higher than that for the standard Galerkin FEM approximation. The idea can be extended straightforwardly
to other type of boundary conditions, and has also been illustrated on the mixed boundary condition.

Numerical experiments with smooth and nonsmooth source terms fully confirmed the convergence of the
numerical scheme. Numerically, the $\Hd{\alpha/2}$-estimate agree excellently with the theoretical
ones. However, the $L^2\II$-estimate is suboptimal: the empirical rate is one-half order higher than the theoretical one. This
suboptimality is attributed to the low regularity of the adjoint solution. This has been observed
earlier in the context of Poisson's equation on L-shaped domains \cite{CaiKim:2001} and the standard
Galerkin method for fractional boundary value problems \cite{JinLazarovPasciak:2013a}. Further, the
$L^\infty\II$-error exhibits a superconvergence phenomenon in case of $\alpha+\ell(\beta,\gamma)>2$.
The reconstructed strength $\mu_h$ always achieves a second-order accuracy, irrespective of the
fractional order $\alpha$ and the smoothness of the source term, which is better than the estimate
in Theorem \ref{lem:singerror}. Optimal convergence rates in the $L^2\II$ and $L^\infty\II$-norms and
that for the singularity strength still await mathematical justifications, for both Dirichlet
and mixed problems. Last, it is of immense interest to extend
the singularity approach to the multi-dimensional case and the mixed case involving both left- and right-sided
fractional derivatives, for which however the solution theory remains
to be developed.

\section*{Acknowledgements}
The authors are grateful to two anonymous referees for their helpful comments, which have led to
an improved presentation of the paper. The research of B. Jin has been partly supported by NSF
Grant DMS-1319052 and National Science Foundation of China grant (No.11471141).

\bibliographystyle{abbrv}
\bibliography{frac}
\end{document}